\numberwithin{equation}{section}
\newtheorem{theorem}{Theorem}[section]
\newtheorem{claim}[theorem]{Claim}
\newtheorem{lemma}[theorem]{Lemma}
\newtheorem{proposition}[theorem]{Proposition}
\newtheorem{remark}[theorem]{Remark}
\def\nrfootnote{\@ifnextchar[\@xfootnote{\stepcounter\@mpfn
\protected@xdef\@thefnmark{\thempfn}%
\@footnotetext}}
\def\blfootnote{\xdef\@thefnmark{}\@footnotetext}
\begin{document}

\singlespacing

\title{Nonlinear Instability Theory of Lane-Emden stars}

\author{Juhi Jang\thanks{Department of Mathematics, University of California, Riverside, Riverside CA 92521,  USA. Email: \;juhijang@math.ucr.edu }}

\date{\today}

\maketitle

\begin{abstract}
We establish a nonlinear instability of the Euler-Poisson system for polytropic gases whose adiabatic exponents take value in $6/5<\gamma<4/3$ around the Lane-Emden equilibrium star configurations. 
\end{abstract}

\blfootnote{2010 \textit{Mathematics Subject Classification.} Primary 35L72, 35L80, 35Q85, 76N15}

\blfootnote{Keywords.  Euler-Poisson equations, Lane-Emden equation, Instability theory, Vacuum boundary, Lagrangian coordinates, Weighted energy method, Hardy inequality}


\section{Introduction}

One of the simplest fundamental hydrodynamical models to describe the motion of self-gravitating Newtonian inviscid gaseous stars is the compressible Euler-Poisson system: 
\begin{equation}
 \begin{split}\label{EP}
  \partial_t\rho +\nabla\!\cdot\! (\rho \mathbf{u})&=0\\
\rho (\partial_t \mathbf{u}+\mathbf{u}\!\cdot\!\nabla \mathbf{u}) + \nabla p&=-\rho \nabla\Phi\\
\Delta \Phi &= 4\pi \rho
 \end{split}
\end{equation}
where $(t,{x})\in \mathbb{R}_{+}\times\mathbb{R}^3 $ and $\rho\,,\,\mathbf{u}$ and $p$
denote respectively the
density, velocity, and pressure of gas. $\Phi$ is the gravitational potential and it is related to the gas through the  Poisson equation. We are interested in the polytropic gases (polytropes) for which the pressure is assumed to be a power function of the density. More specifically, we consider the following equation of state: 
\begin{equation}\label{gamma}
 p=K\rho^\gamma
\end{equation}
 where $K$ is an
entropy constant and $\gamma>1$ is the adiabatic gas exponent. The values of $\gamma$
distinguish the property of stars. For instance, $\gamma=5/3$ is 
used for a monatomic gas, $\gamma=7/5$ for a diatomic gas. The smaller $\gamma$ represents the heavier molecule of the gas.

The Euler-Poisson system \eqref{EP}  under the spherically symmetric motion -- $\rho(t,x)=\rho(t,r)$,  $\mathbf{u}(t,x)=u(t,r)\tfrac{x}{r}$ where $r=|x|$ -- reads as follows
\begin{equation}
\begin{split}
\rho_t + \frac{1}{r^2}(r^2\rho u)_r&= 0,\\
\rho u_t + \rho u u_r + p_r +\frac{4\pi\rho}{r^2}\int_{0}^{r}\rho
s^2 ds &=0.\label{EPs}
\end{split}
\end{equation}
Stationary solutions $(\rho_0(r), u_0=0)$ of (\ref{EPs}) satisfy the following 
ordinary differential equation 
\begin{equation}
\frac{dp}{dr} + \frac{4\pi\rho}{r^2}\int_{0}^{r}\rho s^2 ds
=0\label{s}
\end{equation}
which can be transformed into the famous Lane-Emden equation, and its non-negative 
solutions can be characterized according to $\gamma$ as follows \cite{Ch,lin}: Letting $M(\rho)\equiv\int 4\pi s^2\rho(s) ds$ be the total
mass of a star, if $\gamma>{6}/{5}$ and any $M>0$, then
 there exists at least one compactly supported solution
 $\rho$ such that $M(\rho)=M$. For $\gamma>{4}/{3}$,
 every solution is compactly supported and unique.
If $\gamma={6}/{5}$ and any $M>0$, there is a unique
 solution $\rho$ with infinite support. 
If $1<\gamma<{6}/{5}$, there are no stationary solutions with
finite total mass. These equilibria for $6/5<\gamma<2$ are often referred to the Lane-Emden stars.

The stability problem of the Lane-Emden star configurations has been of great interest both physically and mathematically. Its physical literature can be tracked back to Chandresekhar \cite{Ch} and it has been
conjectured by astrophysicists that the steady states for
$\gamma<{4}/{3}$ are unstable.  To see why this conjecture makes sense, we recall the total energy $E$ of Euler-Poisson system \eqref{EP}: 
\begin{equation}\label{energy}
E(\rho,u)=\int \frac{1}{2}\rho |u|^2+\frac{p}{\gamma-1} dx-\frac{1}{8\pi}\int |\nabla\Phi|^2 dx.
\end{equation} 
Note that $E$ is not
positive definite, which is a major difficulty of the stability
question. The stability is related to the competition between the
kinetic energy which tries to pull the gas away from the center and the gravitational potential energy which tries to push the gas to the center. 
 The corresponding energy for the  Lane-Emden equilibria of \eqref{s} can be written  in terms of the pressure integral: 
\[
E(\rho)=\frac{4-3\gamma}{\gamma-1}\int p \,dx. 
\]
For the explicit computation, see \cite{DLTY}. We see that the energy is negative for $\gamma>4/3$ and positive for $\gamma<4/3$ and hence one may predict the stability based on the principle of minimum energy. In fact, one can check that when $\gamma<4/3$,  by constructing a scaling invariant family of steady states, the corresponding steady states are not minimizers of the energy 
functional \eqref{energy} and this indicates a 
possibility of certain kind of instability.

 The mathematical development of the stability theory of the Lane-Emden stars is rather recent. The linear stability  
was studied by Lin \cite{lin}: any stationary solution is stable when
$\gamma>{4}/{3}$ and unstable when $\gamma<{4}/{3}$.  In
accordance with the linear stability, a nonlinear stability for
$\gamma>{4}/{3}$ was established by Rein in \cite{Rein} using the variational approach based on the fact
that the steady states are minimizers of an energy functional.
For $\gamma={4}/{3}$, in \cite{DLTY}, Deng, Liu, Yang, and  Yao
showed that the energy of a steady state is zero and any small
perturbation can make the energy positive and cause part of the
system go off to infinity.  In \cite{J0}, the author proved the fully nonlinear,
dynamical instability of the steady profile for $\gamma={6}/{5}$ based on the bootstrap argument and nonlinear weighted energy estimates. For $\gamma=6/5$, the density of the Lane-Emden star decays rapidly without vanishing in any finite radius and the analysis in \cite{J0} is not directly applied to  other cases of 
${6}/{5}<\gamma<4/3$ which attain more physical, interesting
features. This is because the gas sphere for ${6}/{5}<\gamma<4/3$ contacts with vacuum continuously,  namely the density of gas vanishes at a finite radius.  
The boundary behavior of compactly
supported Lane-Emden solutions is characterized as
follows \cite{Ch, lin}:
\begin{equation}
\rho_0(r)\sim
(R-r)^{\frac{1}{\gamma-1}}\;\text{ for }\;r\sim R\label{behavior}
\end{equation}
near the boundary. The presence of this vacuum boundary makes the problem challenging and very interesting and the nonlinear stability question has been an open problem for a long time. The purpose of this article is to establish a nonlinear instability theory of compactly supported Lane-Emden steady stars for $6/5 < \gamma < 4/3$ in the presence of the vacuum boundary.

In the next section, we introduce Lagrangian coordinates to deal with the vacuum states satisfying \eqref{behavior}, derive the Lane-Emden equation in Lagrangian formulation, and state the main result of this article.

\section{Lagrangian formulation and Main result}

\subsection{Physical vacuum}

The boundary behavior given in \eqref{behavior} naturally arises from the physical system such as the Euler-Poisson system or Euler equation with damping  and it is referred to the so-called physical vacuum  in the context of compressible fluids and gases \cite{LY2}. In general, when the initial density function contains a vacuum, the vacuum boundary $\Gamma$ is defined as
\[
 \Gamma=cl\{(t,x):\rho(t,x)>0\}\cap cl\{(t,x):
\rho(t,x)=0\}. 
\]
By introducing the
sound speed $c$
$$c=\sqrt{\frac{d}{d\rho}p(\rho)}\quad (=\sqrt{K\gamma}\rho^{\frac{\gamma-1}{ 2}}
\text{ for polytropic gases}),$$
we call a vacuum boundary $\Gamma$ {physical} if
\begin{equation}\label{pvb}
-\infty < \frac{\partial c^2}{\partial n}<0
\end{equation}
in a small neighborhood of the boundary, where
$n$ is the outward unit normal to $\Gamma$. 
Despite its physical importance, even the local existence theory of
classical solutions of compressible Euler equations featuring the physical vacuum boundary was established  only recently. This is because
if the physical vacuum boundary condition \eqref{pvb} is assumed,
the classical theory of hyperbolic systems can not be directly applied. Indeed, Nishida in \cite{Nishida} suggested to consider a free boundary problem which includes this kind of
 singularity caused by vacuum, not shock wave singularity:  under the 
  physical vacuum condition \eqref{pvb}, the boundary is supposed to move with a finite normal acceleration, and it becomes part of the unknown. Hence, one needs to deal with a moving boundary. 
Very recently, the local existence theory of the physical vacuum states for three-dimensional compressible Euler equations  has been given independently  by the author and Masmoudi 
\cite{JM10} and by Coutand and Shkoller \cite{CS10} in Lagrangian coordinates with different approaches. For more detail on physical vacuum, we refer to \cite{JM1} and references therein. 

There are some existence theories available of  Euler or Euler-Poisson equations in other smooth vacuum states when $c$ is continuously differentiable; for instance, see \cite{LY1, Makino92, MU87, MUK86}. For more discussion on the existence theory concerning vacuum states, we refer to \cite{JM10, JM1}.

In accordance with the recent advancement of physical vacuum \cite{CS10,JM10}, we will study the Euler-Poisson system \eqref{EP} or \eqref{EPs} in a suitable Lagrangian coordinates and formulate the problem in such Lagrangian coordinates.

\subsection{Derivation of the system in Lagrangian  coordinates}

First we will write the Euler-Poisson system \eqref{EP} in Lagrangian coordinates. Let $\eta(t,x)$ be the position of the gas particle $x$ at time $t$ so that
\begin{equation}\label{flow}
\eta_t = \mathbf{u}(t, \eta(t,x))\text{ for }t>0 \text{ and }\eta(0,x)=\eta_0\, \text{ in } \Omega\,.
\end{equation}
Here $\eta_0$ is not necessarily the identity map but diffeomorphic to the identity map. Our choice of $\eta_0$ will depend on the initial density profile and in fact, in our setup, the identity map will correspond to the equilibrium state. 
  
The followings are the Lagrangian quantities:
\[
\begin{split}
 \mathbf{v}(t,x)&\equiv \mathbf{u}(t, \eta(t,x))\;\text{(Lagrangian velocity)}\\ f(t,x)&\equiv \rho(t,\eta(t,x))\;
 \text{(Lagrangian density)}\\
 \Psi(t,x)&\equiv \Phi(t,\eta(t,x))\;
 \text{(Lagrangian Potential)}\\
A&\equiv[D\eta]^{-1}\;\text{(inverse of deformation tensor)}\\
J&\equiv\det D\eta\;\text{(Jacobian determinant)}\\ a&\equiv
JA\;\text{(transpose of cofactor matrix)}
\end{split}
\]
We use  Einstein's summation convention and the notation $F,_k$ to denote the $k^{\text{th}}$-partial derivative of $F$.  In this subsection, we use $i,j,k,l,r,s$ to denote 1, 2, 3.
The Euler-Poisson equations \eqref{EP} read as follows:
\begin{equation}\label{Lag}
\begin{split}
 f_t+fA^j_i\mathbf{v}^i,_{j}&=0\\ f\mathbf{v}_t^i+ KA_i^k f^\gamma,_{k}&=-fA^k_i\Psi,_k\\
 A^k_i(A^l_i\Psi,_l),_k&=4\pi f
\end{split}
\end{equation}
Since
$$J_t=JA_i^jv^i,_j \,,$$
together with the equation for $f$, we find that $$fJ=\rho(0) J(0)=\rho_{in}\,\text{det} D\eta_0$$ where
$\rho_{in}$ is a given initial density function. We now choose $\eta_0$ so that 
\begin{equation}\label{eta0}
fJ=\rho_{in}\,\text{det} D\eta_0 =\bar{\rho} 
\end{equation}
where $\bar{\rho}$ is the equilibrium density profile of the Lane-Emden star given by \eqref{s} which is our object for the instability theory. See also Section \ref{2.3} for the Lane-Emden equation. 
 
By using the relation 
$A^k_i=J^{-1}a_i^k$,  we see that the system \eqref{Lag} is reduced to the following:
\begin{equation}\label{Lag2}
\begin{split}
\bar{\rho}\mathbf{v}^i_t +Ka_i^k({\bar{\rho}}^\gamma{J}^{-\gamma}),_k&=-\bar{\rho}A^k_i\Psi,_k \\
 A^k_i(A^l_i\Psi,_l),_k&=4\pi \bar{\rho}J^{-1}
\end{split}
\end{equation}
along with
\begin{equation}\label{eta}
 \eta_t^i =\mathbf{v}^i .
\end{equation}

Now let $w$ be
\begin{equation}\label{w}
w\equiv K\bar{\rho}^{\gamma-1}.
\end{equation}
Note that $\tfrac{\gamma}{\gamma-1}w$ is the equilibrium enthalpy. We prefer to work with the enthalpy $w$ to  the density $\bar{\rho}$, since $w$ behaves like a distance function near the boundary irregardless of the values of $\gamma$ under the physical vacuum condition \eqref{behavior}. See Section \ref{2.3} for the basic property of $w$. We will treat $w$ as the weight function. This viewpoint is important not only for the existence theory but also for our instability analysis.

By using the Piola
identity: $a^k_i,_k=0$,  we see that the equation \eqref{Lag2} takes the following form
\begin{equation}\label{general}
\begin{split}
w^\alpha \mathbf{v}^i_t +(w^{1+\alpha}\,A_i^k{J}^{-1/\alpha}),_k&=-w^\alpha A^k_i\Psi,_k\\ A^k_i(A^l_i\Psi,_l),_k&={4\pi}K^{-\alpha} w^\alpha J^{-1}
\end{split}
\end{equation}
where 
\begin{equation}\label{alpha}
\alpha\equiv \frac{1}{\gamma-1}\;.
\end{equation}
Here $\alpha$ has been introduced for the notational convenience. We will use both 
$\alpha$ and $\gamma$, which are related through \eqref{alpha},  in the equations and the estimates throughout the article. For instance, the range of the adiabatic exponents of our interest reads in terms of $\alpha$ as follows: 
\[
6/5<\gamma<4/3  \;\;  \Longleftrightarrow\;\; 3<\alpha<5
\]

For our purpose of the instability theory,  it is sufficient to consider the spherically symmetric motion. 
What follows next is  the derivation of the spherically symmetric  Euler-Poisson flows in Lagrangian coordinates.

  Let
\begin{equation}\label{xi}
\eta (t,x) \equiv  \xi(t,r) \, x
\end{equation}
where $r=|x|$. Since $\eta_t=\xi_t\, x = \mathbf{v}$, $v(t,r)= r\,\xi_t $. Since $\partial_i= \tfrac{x^i}{r}\partial_r$, note that $$\eta^j,_i= \xi,_i x^j+\xi x^j,_i= \xi_r \frac{x^ix^j}{r}+\xi \delta^j_i \;\; \Rightarrow \;\; D\eta = \xi I + \frac{\xi_r}{r} (x^ix^j)$$
Thus $J$ and $[D\eta]^{-1}$ can be written as
\begin{equation}
\begin{split}\label{J}
J=\xi^2(\xi+ \xi_r r) \;;\; [D\eta]^{-1}= \frac{1}{\xi} I -\frac{\xi_r}{\xi (\xi+\xi_r r)r}\,(x^i x^j)
\end{split}
\end{equation}
and hence $A^k_i$ is given by
\begin{equation}\label{A}
A^k_i= \frac{\delta^k_i}{\xi}-\frac{\xi_r \,x^kx^i}{\xi (\xi+\xi_r r)r}
\end{equation}
and the gradient $A^k_i\partial_k$ is given by
\[
A^k_i\partial_k = \frac{x^i}{r(\xi+\xi_r r)}\partial_r
\]
and the Laplacian $A^k_i\partial_k(A^l_i\partial_l)$ is given by
\[
\begin{split}
A^k_i\partial_k(A^l_i\partial_l)&= A^k_i\partial_k\left(\frac{x^i}{r(\xi+\xi_r r)}\partial_r\right)= A^k_i \delta^i_k\frac{1}{r(\xi+\xi_r r)}\partial_r+x^i A^k_i\partial_k\left(\frac{1}{r(\xi+\xi_r r)}\partial_r\right)\\
&= \left(\frac{3}{\xi}-\frac{\xi_r r}{\xi(\xi+\xi_r r)}\right)\frac{\partial_r}{r(\xi+\xi_r r)}+ \frac{r}{\xi+\xi_r r}\partial_r
\left(
\frac{\partial_r}{r(\xi+\xi_r r)}\right)\\
&=  \frac{1}{\xi+\xi_r r}\partial_r\left(
\frac{\partial_r}{\xi+\xi_r r}\right)+ \frac{2}{\xi r(\xi+\xi_r r)}\partial_r \\
&= \frac{1}{(\xi+\xi_r r)(\xi r)^2}\partial_r \left(\frac{ (\xi r)^2}{\xi+\xi_r r}\partial_r\right)
\end{split}
\]
Thus the Poisson equation in \eqref{general} for spherically symmetric flows takes the following form:
\begin{equation}\label{sPoi}
\frac{1}{(\xi+\xi_r r)(\xi r)^2}\partial_r \left(\frac{(\xi r)^2}{\xi+\xi_r r}\Psi_r\right)= 4\pi K^{-\alpha} w^\alpha J^{-1}
\end{equation}
Based on \eqref{xi}, \eqref{J}, and \eqref{A}, we see that the momentum equation in \eqref{general} for spherically  symmetric flows can be written as an equation for $\xi$:
\begin{equation}\label{xiEq}
w^\alpha \xi_{tt} +\frac{\xi^2}{r}\partial_r \left(w^{1+\alpha}[\xi^2(\xi+\xi_rr)]^{-\frac{1+\alpha}{\alpha}}\right) +
\frac{w^\alpha}{r(\xi+\xi_r r)}\Psi_r=0
\end{equation}
for $t\geq0$ and $0\leq r\leq R$ where $R$ is the radius of the Lane-Emden star. 
We remark that no boundary conditions are necessary to construct smooth solutions for \eqref{xiEq} due to the degenerate weights \cite{CS10, JM10}. 
Note that from \eqref{sPoi} the potential term can be also written as
\[
\frac{w^\alpha}{r(\xi+\xi_r r)}\Psi_r= \frac{ w^\alpha}{\xi^2\, r^3}\int_0^r \frac{4\pi}{K^\alpha} w^\alpha s^2 ds \;\Big(= \frac{ w^\alpha}{\xi^2\, r^3}\int_{B(0,r)}\bar{\rho} \, dx\Big). 
\]
This potential term is essentially lower-order with respect to differential structure at least for a short time and moreover, it has the right weight $w^\alpha$. Hence, together with the regularity of the weight $w$  in Lemma \ref{lem2.1}, the local existence of smooth solutions to \eqref{xiEq} can be easily established based on the theories developed by the author and Masmoudi 
\cite{JM10} or by Coutand and Shkoller \cite{CS10}. 
In this article, we take the existence of smooth solutions for granted and  focus on the instability analysis.

\subsection{Lane-Emden star configuration in Lagrangian formulation}\label{2.3}

In this subsection, we will identify the Lane-Emden stars satisfying \eqref{s} in our Langrangian formulation.    
The static equilibria of the Euler-Poisson system under spherical symmetry governed by \eqref{xiEq} can be found by setting $\xi\equiv 1$, since $\rho_{in}=\bar{\rho}$ and thus $J=1$. Letting $w$ be the corresponding enthalpy profile \eqref{w}, it is clear that $w$ satisfies the following ordinary differential equation 
\begin{equation}\label{EQ}
\frac{1}{r} \partial_r(w^{1+\alpha})+\frac{w^\alpha}{r^3}\int_0^r \frac{4\pi}{K^\alpha} w^\alpha s^2 ds=0
\end{equation}
or equivalently
\begin{equation}\label{L-E}
w_{rr}+\frac{2}{r}w_r +\frac{4\pi}{(1+\alpha)K^\alpha} w^{\alpha} =0
\end{equation}
The equation \eqref{L-E} is the so-called Lane-Emden equation. As mentioned in the introduction, this 
equation has been studied extensively. In particular, we recall the well-known  existence result from  \cite{Ch, lin}:  Supplemented with the following normalized boundary conditions 
\[
w(0)=1\text{ and } w_r(0)=0
\]
 for a given finite total mass $M$, there exist a ball-type solution $w$ to the Lane-Emden equation \eqref{L-E} and a finite radius $R$ when $1<\alpha<5$, equivalently $6/5<\gamma<2$ such that  
 
 \
 
 (i) $w>0$ for $0<r<R$ and   $w(R)=0$;
 
 (ii) $-\infty <w_r <0$ for $0<r<R$;
 
 (iii) $w$  satisfies the physical vacuum condition \eqref{behavior} or \eqref{pvb}. 
 
 \

It turns out that the Lane-Emden configuration $w$  enjoys better regularity. In Lemma \ref{lem2.1}, we will study the higher regularity of $w$, which will be helpful  to infer the suitable regularity of the growing mode solution. 

\subsection{Main Result}

To state the main theorem, we write the equation \eqref{xiEq} in a perturbation form around  the equilibrium state given by $\xi=1$ and $\xi_t=0$. Letting $\xi \equiv 1+ \zeta$ where $|\zeta|\ll 1$, we obtain the equation for  $\zeta$ as follows: 
\begin{equation}\label{zeta}
\begin{split}
w^\alpha \zeta_{tt} +\frac{(1+\zeta)^2}{r}\partial_r\left(w^{1+\alpha}[(1+\zeta)^2(1+\zeta+\zeta_rr)]^{-\frac{1+\alpha}{\alpha}}\right)&\\
+  \frac{ w^\alpha}{(1+\zeta)^2\, r^3}\int_0^r \tfrac{4\pi}{K^\alpha} w^\alpha s^2& ds
=0.
\end{split}
\end{equation}

Our main result establishes the fully nonlinear dynamical instability of the Lane-Emden equilibrium stars satisfying \eqref{L-E} under the Euler-Poisson dynamics \eqref{zeta}.   In the statement of the following theorem, for any given $\delta> 0$ and $2\theta>\delta$, we define 
\begin{equation}\label{T}
T^\delta \equiv \frac{1}{\sqrt{\mu_0}} \ln \frac{2\theta}{\delta}
\end{equation}
where $\sqrt{\mu_0}$ is the sharp linear growth rate.

\begin{theorem}\label{main} Assume that  $6/5<\gamma<4/3$ $(3<\alpha<5)$. Then there exist functions spaces $Z^\alpha_0$ and $Z^\alpha_1$ as well as constants $\theta>0$ and $C>0$ 
such that for any sufficiently small $\delta>0$, there exists a family of solutions $(\partial_t\zeta^\delta,\, \zeta^\delta)$ to the Euler-Poisson system \eqref{zeta} for $t\in [0, T)$ with $T>T^\delta$ so that 
\[
\left\|\left(\partial_t\zeta^\delta(0),\,\zeta^\delta(0)\right) \right\|_{Z^\alpha_1}\leq C \delta\; \text{ but }\; \sup_{0\leq t\leq T^\delta} 
\left\| \left(\partial_t\zeta^\delta(t),\,\zeta^\delta(t)\right)\right\|_{Z^\alpha_0}\geq \theta. 
\]
\end{theorem}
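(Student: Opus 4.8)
The plan is to implement the standard ``linear growing mode $\Rightarrow$ nonlinear instability'' bootstrap scheme of Guo--Strauss / Grenier type, adapted to the degenerate weighted setting dictated by the physical vacuum. I would proceed in four stages.

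\emph{Stage 1: the linearized operator and a growing mode.} Linearizing \eqref{zeta} about $\zeta=0$ produces an equation of the form $w^\alpha \zeta_{tt} = \mathcal{L}\zeta$, where $\mathcal{L}$ is a second-order operator in $r$ with the degenerate weight $w^{1+\alpha}$ built in (formally, $\mathcal{L}\zeta = \frac{1}{r^2}\partial_r\big(\text{(coeff)}\,w^{1+\alpha}r^{\,?}\partial_r(\text{something})\big)+\text{lower order}$, together with the linearization of the Newtonian potential term, which by the remark after \eqref{xiEq} carries the same weight $w^\alpha$ and is lower order). Working in the natural weighted Hilbert space $L^2_{w^\alpha}$, one checks that $\mathcal{L}$ is self-adjoint and, crucially, that because $6/5<\gamma<4/3$ (equivalently $3<\alpha<5$, so $4-3\gamma<0$), the associated quadratic form is \emph{not} positive: a suitable scaling-type test function makes $\langle \mathcal{L}\zeta,\zeta\rangle>0$, exactly the sign discussed around the energy formula $E(\rho)=\frac{4-3\gamma}{\gamma-1}\int p\,dx$ in the introduction. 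By a variational/min-max argument one then extracts the sharp largest eigenvalue $\mu_0>0$ with eigenfunction $\chi(r)$, giving an exact solution $e^{\sqrt{\mu_0}\,t}\chi(r)$ of the linearized system; the regularity of $\chi$ is controlled via the higher regularity of $w$ from Lemma~\ref{lem2.1} and weighted elliptic (Hardy-type) estimates. The spaces $Z^\alpha_0, Z^\alpha_1$ are the weighted energy spaces (with weight $w^\alpha$ and the appropriate number of $w$-weighted derivatives) in which both the linear estimate and the nonlinear energy estimate close.

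\emph{Stage 2: choice of approximate data and the nonlinear energy estimate.} For each small $\delta>0$ set the initial data to be $\delta$ times (the data of) the growing mode: $(\partial_t\zeta^\delta(0),\zeta^\delta(0)) = \delta\,(\sqrt{\mu_0}\,\chi,\chi)$, so $\|(\partial_t\zeta^\delta(0),\zeta^\delta(0))\|_{Z^\alpha_1}\le C\delta$. Local existence of the smooth solution is taken for granted per the text. The core analytic work is a \emph{nonlinear weighted energy inequality}: defining a high-order energy $\mathcal{E}(t)$ equivalent to $\|(\partial_t\zeta^\delta(t),\zeta^\delta(t))\|_{Z^\alpha_0}^2$, one shows via the weighted energy method (integration by parts against $w^\alpha$-weighted multipliers, using the structure of \eqref{zeta} and Hardy inequalities to absorb boundary terms created by the vacuum degeneracy) that $\mathcal{E}(t)\le C\,\mathcal{E}(0)\,e^{2\sqrt{\mu_0}\,t} + C\int_0^t e^{2\sqrt{\mu_0}(t-s)}\,\mathcal{E}(s)^{3/2}\,ds$ as long as $\mathcal{E}(s)$ stays small. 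The quadratic (and higher) terms in $\zeta,\zeta_r$ in \eqref{zeta} — e.g. the expansion of $(1+\zeta)^2[(1+\zeta)^2(1+\zeta+\zeta_r r)]^{-(1+\alpha)/\alpha}$ and of $(1+\zeta)^{-2}$ — are genuinely nonlinear but smooth in $(\zeta,\zeta_r)$ near $0$, so they are bounded by $\mathcal{E}^{3/2}$ once $\mathcal{E}$ is $O(1)$; the weights must be tracked so that no inverse power of $w$ is generated.

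\emph{Stage 3: the bootstrap / escape argument.} Let $T^\delta=\frac{1}{\sqrt{\mu_0}}\ln\frac{2\theta}{\delta}$ and choose $\theta$ small (independent of $\delta$) so that $\mathcal{E}\le (2\theta)^2$ still qualifies as ``small'' in Stage 2. Define $T_\ast$ as the first time $\|(\partial_t\zeta^\delta,\zeta^\delta)\|_{Z^\alpha_0}$ reaches $\theta$; suppose for contradiction $T_\ast\ge T^\delta$. On $[0,T_\ast]$ the integral inequality of Stage 2 combined with a Gronwall-type argument gives, for $t\le T^\delta$, a bound $\|(\partial_t\zeta^\delta,\zeta^\delta)\|_{Z^\alpha_0}\le C_1\delta e^{\sqrt{\mu_0}t} + C_2(\delta e^{\sqrt{\mu_0}t})^{3/2}\le C_3\,\delta e^{\sqrt{\mu_0}t}$, since $\delta e^{\sqrt{\mu_0}t}\le 2\theta$ is small on this interval. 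Simultaneously, subtracting off the exact linear growing mode $\delta e^{\sqrt{\mu_0}t}(\sqrt{\mu_0}\chi,\chi)$ and estimating the difference (which solves the linearized equation forced by the quadratic remainder) shows the solution stays $e^{\sqrt{\mu_0}t}\delta$-close to that mode; hence at $t=T^\delta$ its $Z^\alpha_0$-norm is at least $c_0\,\delta e^{\sqrt{\mu_0}T^\delta} - (\text{remainder}) = 2c_0\theta - O(\theta^{3/2}) > \theta$ for $\theta$ small, contradicting the definition of $T_\ast$ (or, if $T_\ast<T^\delta$, directly contradicting $\|\cdot\|_{Z^\alpha_0}(T_\ast)=\theta$ being reached later). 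Rescaling $\theta$ by $c_0$ gives the statement exactly as in Theorem~\ref{main}.

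\emph{Main obstacle.} The delicate point is Stage 2: closing the nonlinear weighted energy estimate in the degenerate norm $Z^\alpha_0$ uniformly down to the vacuum boundary, where $w\to 0$. One must choose the correct number of tangential/weighted-radial derivatives and the correct powers of $w$ so that (a) the top-order energy identity has a good sign after integration by parts (the elliptic part must dominate, using the structure inherited from the Lane-Emden equation \eqref{L-E}), (b) all boundary terms at $r=R$ vanish or are controlled via Hardy inequalities with the sharp weight $w^\alpha\sim(R-r)^\alpha$, and (c) the commutators and the genuinely nonlinear terms (including the nonlocal potential term) are estimated without losing derivatives or picking up negative powers of $w$. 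This is precisely where the physical-vacuum machinery of \cite{JM10,CS10} and weighted Hardy inequalities enter, and it is the technical heart of the paper; the rest is the by-now-standard growing-mode bootstrap.
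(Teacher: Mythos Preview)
Your overall scheme is correct and matches the paper's: construct the growing mode variationally, take $\delta(\phi_0,\sqrt{\mu_0}\phi_0)$ as data, close a nonlinear weighted energy estimate, and finish with a Duhamel/bootstrap comparison to the linear evolution. One small slip: for $6/5<\gamma<4/3$ one has $4-3\gamma>0$, not $<0$; this is exactly the sign that makes the second term of $Q(\phi)$ positive in Proposition~\ref{prop3.1}.

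Where your Stage~2 is vague, the paper makes two concrete structural choices that you should be aware of. First, it differentiates \emph{only in time} to build the high-order energy $\mathcal{E}=\sum_j\mathcal{E}^j$ (time derivatives commute with the weights $w^\alpha,r^4$, so no origin or vacuum commutators appear), and then recovers all spatial derivatives afterward by elliptic estimates on the equation (Proposition~\ref{equiv}, Lemma~\ref{lem4.9}), yielding $\widetilde{\mathcal{E}}\precsim\mathcal{E}$; your ``tangential/weighted-radial derivatives'' does not quite capture this, since in spherical symmetry there is no tangential direction. Second, rather than your integral inequality with an $\mathcal{E}^{3/2}$ source, the paper introduces the nonlinear variable $\varphi=(1+\zeta)^2\zeta_t$ and a nonlinear energy $\mathfrak{E}^i$ (see \eqref{varphi}, \eqref{varE}) precisely because the time-differentiated pressure term then becomes self-adjoint in $\varphi$; this gives a clean differential inequality $\tfrac{d}{dt}\mathfrak{E}^i\le (C\theta_1+\kappa)\mathfrak{E}^i+\text{lower}$ with a rate that can be pushed below $\sqrt{\mu_0}$. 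The bootstrap is then two-tiered: assume only the \emph{lowest} energy $\mathcal{E}^0$ grows like $\delta^2 e^{2\sqrt{\mu_0}t}$, use Gronwall on the $\mathfrak{E}^i$ inequalities to propagate this to all of $\widetilde{\mathcal{E}}$ (Proposition~\ref{prop6.1}), and only then run Duhamel at the $\mathcal{E}^0$ level to get the lower bound and close. Your one-step version is plausible in spirit, but the paper's split is what actually lets the estimate close without tracking commutators against the degenerate weight.
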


The precise statement of Theorem \ref{main} is given in Theorem \ref{thm} and the spaces $Z^\alpha_0$ and $Z^\alpha_1$ will be clarified in Section \ref{4}.

\begin{remark}
The escape time $T^\delta$ is determined through \eqref{T} by the linear growth rate $\sqrt{\mu_0}$. We note that the instability occurs before the possible breakdown or any collapse of strong solutions. We also remark that the instability occurs in the $Z_0^\alpha$ norm, which is indeed characterized by the instantaneous physical energy \eqref{E0}. 
\end{remark}

\begin{remark}
The above result shows that no matter how small amplitude of initial perturbed data is  taken to be, we can find a solution such that the corresponding energy escapes at a time $T^\delta$: there is no stabilization of the system. We conclude from this that all Lane-Emden steady star configurations for $6/5<\gamma<4/3$ are nonlinearly unstable. 
\end{remark}

\begin{remark} The instability given in Theorem \ref{main} is not related to the vorticity nor to the geometry  of the boundary, nor to the topological change of the domain. In fact, our result indicates that for gases of heavier molecules $6/5<\gamma<4/3$, the self-gravitation may be strong enough to attract matter to the center.  An interesting problem would be to investigate the possible gravitational collapse phenomenon rigorously. 
\end{remark}

Up to our knowledge, this work is the first rigorous result to address the nonlinear instability of the Lane-Emden stars for  $6/5<\gamma<4/3$ under the Euler-Poisson system. Our results together with the earlier works \cite{DLTY, J0, Rein} now complete a nonlinear stability and instability theory of the Euler-Poisson system for the Lane-Emden equilibrium stars, which provides a rigorous account of the longstanding conjecture by astrophysicists \cite{Ch}.  Also, we believe that the above result for the Lane-Emden stars and the methods developed in this paper will shed some light on other stability problems in the context of free boundary. 

We mention that based on strong solutions in the mass Lagrangian coordinates \cite{J}, the nonlinear instability theory for the Lane-Emden stars under the Navier-Stokes-Poisson system has been recently established by the author and Tice in \cite{JT}: it was shown that there's no stabilization of the system for $6/5<\gamma<4/3$ in the presence of viscosity around the Lane-Emden equilibria. On the other hand, away from the Lane-Emden equilibria,  there are interesting works on the stability theory of the Euler-Poisson system \eqref{EP} by Luo and Smoller \cite{luos2, luos}, where  a nonlinear dynamical stability of rotating stars for  $\gamma>4/3$ was established based on a variational approach by the energy-Casimir technique and  a finite time stability of the entropy weak solutions was proven. 

The proof of Theorem \ref{main} is based on a bootstrap argument from a linear instability to a nonlinear dynamical instability. Passing from a linearized instability to nonlinear instability demands much effort in the PDE context since the spectrum of the linear part is fairly complicated and the unboundedness of the nonlinear part usually yields a loss in derivatives. The key is to identify the right function spaces where the nonlinear instability would occur and this requires a careful nonlinear analysis.  We develop nonlinear energy estimates for the whole system so that: first, the nonlinear estimates can be closed; and second, their interplay with the linear analysis can complete the argument. For this particular problem, due to the degeneracy caused by the physical vacuum, the novel nonlinear weighted energy estimates together with Hardy inequalities are employed.

A key step is to derive Proposition \ref{prop6.1}, which is an important estimate for the bootstrap argument. There are a few new ingredients. In order to avoid the coordinate singularity at the origin due to the consideration of spherically symmetric flows as well as the vacuum singularity due to the physical vacuum boundary, we will apply the time differentiations to obtain the nonlinear energy inequalities systematically and then recover all the spatial derivatives from the equation by exploiting the elliptic estimates. A pivotal tool to justify such a process is the Hardy inequality; in particular, we will make use of the localized Hardy inequalities near the origin and near the boundary and such Hardy inequalities allow us to overcome the coordinate singularity near the origin as well as the vacuum singularity.  In addition to the homogeneous weighted energy $\mathcal{E}$, we further introduce the nonlinear energy $\mathfrak{E}$. This is because $\mathcal{E}$ works efficiently for the bootstrap argument  from the linear analysis  to the nonlinear solutions, while  the nonlinear energy $\mathfrak{E}$ captures better the intriguing structure of nonlinear, degenerate pressure gradient term. We will derive the energy inequality for  $\mathfrak{E}$ having the form of 
\[
\frac{d}{dt}\mathfrak{E}\leq \beta \mathfrak{E} + \text{ lower-order derivative terms}
\]
where $\beta$ is no greater than the sharp linear growth rate $\sqrt{\mu_0}$.  To do so, we identify the quantities whose $L^\infty$ bounds can be controlled within the total energy and use them crucially in controlling the nonlinear terms. With the equivalence of two energies $\mathcal{E}$ and  $\mathfrak{E}$, we can close the argument.

The rest of the paper proceeds as follows. In Section \ref{3}, we construct the largest growing mode solution to the linearized Euler-Poisson system, study the regularity of such growing mode profile, and show that our growing mode solution grows at the largest possible rate.   In Section \ref{4}, the instant temporal energy and the total energy are introduced and we establish the equivalence of two energy norms  via various Hardy inequalities. 
 In Section \ref{5}, we derive high-order nonlinear weighted energy inequalities for the nonlinear energy norms and show that the nonlinear energy is equivalent to the homogeneous total energy. Based on the linear growth and the nonlinear estimates, we then prove the bootstrap argument and Theorem \ref{main} in Section \ref{6}.

Throughout the paper, we will write $C$ for a generic constant unless the distinction between constants is vital. And we use $\mathfrak{A}\precsim \mathfrak{B}$ to denote that $\mathfrak{A}\leq C\mathfrak{B}$ for a constant $C>0$. 


\section{The largest growing mode and Linear instability}\label{3}

In this section, we study the linearized Euler-Poisson system around the Lane-Emden stars and show the linear instability of the Lane-Emden stars whose indices are $6/5<\gamma<4/3$ ($3<\alpha<5$) in the sense of Lin's criterion \cite{lin}. We will first derive the linearized equation of \eqref{zeta}. Notice that by Taylor's theorem, for sufficiently small $\zeta$, the nonlinear pressure term in \eqref{zeta}  can be written as  
\begin{equation}\label{exp}
\left[(1+\zeta)^2(1+\zeta+\zeta_rr)\right]^{-\frac{1+\alpha}{\alpha}} = 1 -\frac{1+\alpha}{\alpha} (3\zeta+\zeta_r r) +h(\zeta,\zeta_rr )
\end{equation}
where $h$ is a smooth function in both arguments $\zeta$ and $\zeta_rr$ and $h(\zeta,\zeta_rr ) = O(|\zeta|^2+|\zeta_r r|^2) $, 
and thus the linearized equation of \eqref{zeta} reads as 
\begin{equation}\label{zetaL}
w^\alpha \zeta_{tt}+ \frac{4[w^{1+\alpha}]_r}{r} \zeta -\frac{1+\alpha}{\alpha\, r}
\left[w^{1+\alpha}(3\zeta+\zeta_r r)\right]_r=0
\end{equation}
where we have used  \eqref{EQ}.

To find a growing mode, we seek a solution to \eqref{zetaL} of the form $\zeta(t,r)=e^{\lambda t}\phi(r)$. Then $\phi$ would satisfy
\begin{equation}\label{lambda1}
\lambda^2 w^\alpha\phi + \underbrace{\frac{4[w^{1+\alpha}]'}{r} \phi -\frac{1+\alpha}{\alpha\, r}
[w^{1+\alpha}(3\phi+\phi' r )]'}_{(\ast)}=0
\end{equation}
where we denote $'=\frac{d}{dr}$. The $(\ast)$ in \eqref{lambda1} can be rewritten as
\[
(\ast)=(4-3\frac{1+\alpha}{\alpha} )\frac{[w^{1+\alpha}]'}{r} \phi - \frac{1+\alpha}{\alpha \,r^4}[w^{1+\alpha}\phi' r^4]'. 
\]
Multiply  \eqref{lambda1} by $r^4$ to get
\begin{equation}\label{lambda2}
\lambda^2  { w^\alpha r^4}\phi =\frac{1+\alpha}{\alpha} [w^{1+\alpha} r^4\phi']' - (4-3\frac{1+\alpha}{\alpha} ){[w^{1+\alpha}]'}\, r^3 \phi
\end{equation}
We denote the right-hand side of \eqref{lambda2} by $L\phi$:
\begin{equation}\label{L}
L\phi\equiv \frac{1+\alpha}{\alpha} [w^{1+\alpha} r^4\phi']' - (4-3\frac{1+\alpha}{\alpha} ){[w^{1+\alpha}]'}\, r^3 \phi
\end{equation}
Then $L$ is self-adjoint and hence $\lambda^2$ is real. 

We recall that in \cite{lin}, the  stability criterion was introduced based on the eigenvalues: $w^\alpha \, (\sim \bar{\rho})$ is called neutrally stable if $\lambda^2<0$ for all eigenvalues $\lambda$ and unstable if $\lambda_1^2>0$ for some eigenvalue $\lambda_1$; and that it was shown that $w^\alpha \, (\sim \bar{\rho})$ is unstable for any $3<\alpha<5$ ($6/5<\gamma<4/3$) in mass Lagrangian framework.   In the next subsection, we verify Lin's linear instability result  in our Lagrangian framework by showing the existence of the positive largest eigenvalue for $3<\alpha<5$. 

\subsection{The existence of the largest growing mode}

We start from $L\phi = \mu  { w^\alpha r^4}\phi $ where $L$ is defined in \eqref{L} and $\mu\equiv \lambda^2$. It is well-known that the largest eigenvalue $\mu_0$ of this equation can be found by the variational formula 
\[
\mu_0=\sup \left\{ \frac{Q(\phi)}{I(\phi)}: Q(\phi)<\infty, I(\phi)<\infty\right\}
\] 
where 
\[
\begin{split}
Q(\phi)&\equiv (L\phi,\phi)= -\frac{1+\alpha}{\alpha} \int_0^R w^{1+\alpha} r^4(\phi')^2 dr  
- (4-3\frac{1+\alpha}{\alpha} )\int_0^R{[w^{1+\alpha}]'}\, r^3 \phi^2 dr, \\
I(\phi)&\equiv ( { w^\alpha r^4}\phi ,\phi) =\int_0^R w^\alpha r^4 \phi^2 dr.
\end{split}
\]
Here we have used $(\,, \,)$ to denote the standard one-dimensional inner product on $(0,R)$. 

Define a norm for any $\phi\in C^\infty([0,R])$, 
\[
\|\phi\|^2\equiv \int_0^R w^{1+\alpha} r^4(\phi')^2 dr  -\int_0^R{[w^{1+\alpha}]'}\, r^3 \phi^2 dr
+ \int_0^R w^\alpha r^4 \phi^2 dr. 
\]
Let $H=\overline{C^\infty([0,R])}$ in the above norm. Since $\frac{Q(\phi)}{I(\phi)}=\frac{Q(c\phi)}{I(c\phi)}$ for any nonzero constant $c$, the variational problem can be restated as finding the maximum $\mu_0$ of $Q(\phi)$ on $H$ under the normalization $I(\phi)=1$. The next proposition shows that the supremum $\mu_0$ can be achieved on $H$. 

\begin{proposition}\label{prop3.1}
There exists a $\phi_0\in H$ such that $I(\phi_0)=1$ and $Q(\phi_0)=\mu_0>0$. 
\end{proposition}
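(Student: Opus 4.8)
The plan is to prove Proposition~\ref{prop3.1} by the direct method of the calculus of variations, i.e.\ by taking a maximizing sequence for $Q$ subject to $I=1$ and extracting a limit that attains the supremum. The first task is to verify that $\mu_0$ is finite and strictly positive. Positivity follows from exhibiting a single test function with $Q(\phi)>0$: since $4-3\frac{1+\alpha}{\alpha} = \frac{4\alpha-3-3\alpha}{\alpha} = \frac{\alpha-3}{\alpha}>0$ for $\alpha>3$, and $[w^{1+\alpha}]' = (1+\alpha)w^\alpha w_r<0$ on $(0,R)$ by property (ii) of the Lane-Emden profile, the second term $-(4-3\frac{1+\alpha}{\alpha})\int_0^R [w^{1+\alpha}]' r^3\phi^2\,dr$ is strictly positive; choosing $\phi$ to be a nonzero constant (or any fixed smooth bump), the first negative term $-\frac{1+\alpha}{\alpha}\int_0^R w^{1+\alpha}r^4(\phi')^2\,dr$ can be made arbitrarily small relative to it by scaling the radial variable, so $Q(\phi)>0$ for a suitable choice, giving $\mu_0>0$. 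Finiteness of $\mu_0$ comes from the definition of the norm $\|\cdot\|$ on $H$: one checks the coercivity bound $Q(\phi)\precsim \|\phi\|^2$ and $\|\phi\|^2 \precsim I(\phi) + Q(\phi)$, so that on the constraint set $I(\phi)=1$ one has $\|\phi\|^2 \precsim 1 + Q(\phi)$, hence $Q(\phi) \le \mu_0 < \infty$ is automatic once we know the sup is not $+\infty$; this last point needs the key compactness input below, so more carefully one should argue $\mu_0<\infty$ directly from a weighted Hardy/Poincar\'e-type inequality controlling $\int w^\alpha r^4\phi^2$ — actually the cleanest route is: the quadratic form $\phi\mapsto \|\phi\|^2$ dominates $I(\phi)$ trivially, so $Q(\phi) = \|\phi\|^2 - I(\phi) - (\text{the }w^\alpha r^4\phi^2\text{ term already in }\|\phi\|^2)$... one should simply note $Q(\phi)\le \|\phi\|^2$ and combine with compactness.

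Next I would set up the direct method proper. Take $\phi_n\in H$ with $I(\phi_n)=1$ and $Q(\phi_n)\to\mu_0$. From $Q(\phi_n)\le \|\phi_n\|^2$ rearranged appropriately, together with $Q(\phi_n)$ bounded and $I(\phi_n)=1$, one gets $\|\phi_n\|$ bounded, so along a subsequence $\phi_n\rightharpoonup\phi_0$ weakly in the Hilbert space $H$. Weak lower semicontinuity of the norm gives control of the gradient term $\int_0^R w^{1+\alpha}r^4(\phi_0')^2\,dr \le \liminf \int_0^R w^{1+\alpha}r^4(\phi_n')^2\,dr$. The crucial compactness step is that the embedding $H\hookrightarrow L^2(w^\alpha r^4\,dr)\cap L^2(-[w^{1+\alpha}]'r^3\,dr)$ is compact, so that $I(\phi_n)\to I(\phi_0)=1$ and the lower-order term $-\int_0^R[w^{1+\alpha}]'r^3\phi_n^2\,dr \to -\int_0^R[w^{1+\alpha}]'r^3\phi_0^2\,dr$. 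Combining: $Q(\phi_0) = \frac{1+\alpha}{\alpha}(-\lim\text{-type bound on gradient term}) + (\text{limit of lower-order term}) \ge \limsup Q(\phi_n) = \mu_0$, while $Q(\phi_0)\le\mu_0 I(\phi_0)=\mu_0$ by definition of the supremum. Hence $Q(\phi_0)=\mu_0$ and $I(\phi_0)=1$, which is exactly the claim. (Note the sign: $Q$ has a \emph{negative} coefficient on the gradient term, so weak lower semicontinuity of $\int w^{1+\alpha}r^4(\phi')^2$ works in our favor, pushing $Q(\phi_0)$ \emph{up}.)

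The main obstacle is establishing the compact embedding of $H$ into the weighted $L^2$ spaces governing $I$ and the lower-order term, because both the coordinate degeneracy at $r=0$ (the $r^4$, $r^3$ factors) and the vacuum degeneracy at $r=R$ (the weight $w^{1+\alpha}$ vanishing like $(R-r)^{1+\alpha}$ by the physical vacuum behavior) have to be handled. The strategy is a localization: split $[0,R]$ into an interior piece $[\varepsilon, R-\varepsilon]$, a neighborhood of the origin $[0,\varepsilon]$, and a neighborhood of the boundary $[R-\varepsilon, R]$. On the interior piece all weights are bounded above and below, so $H$ restricts to the ordinary $H^1([\varepsilon,R-\varepsilon])$ and Rellich gives compactness. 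Near the origin and near the boundary one uses the localized weighted Hardy inequalities alluded to in the paper's introduction — e.g.\ near $r=R$, writing $d=R-r$, an inequality of the form $\int_{R-\varepsilon}^R d^{\,\alpha}\phi^2\,dr \precsim \int_{R-\varepsilon}^R d^{\,1+\alpha}(\phi')^2\,dr + \text{(boundary/lower-order)}$, which both gives the embedding $H\hookrightarrow L^2(w^\alpha r^4\,dr)$ and, combined with a vanishing-tail argument ($\int_{R-\varepsilon}^R \to 0$ uniformly as $\varepsilon\to0$ on bounded sets of $H$), upgrades weak convergence to strong $L^2(w^\alpha r^4\,dr)$ convergence. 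The term $-[w^{1+\alpha}]'r^3 \sim (1+\alpha)w^\alpha |w_r| r^3$ behaves comparably to $w^\alpha r^4$ up to the boundary since $|w_r|$ is bounded above and below near $R$ by the physical vacuum condition, so the same Hardy estimates cover it. Assembling the three pieces gives the required compactness, and the direct method then closes as above. One should also record that $\phi_0\not\equiv 0$: this is immediate since $I(\phi_0)=1$.
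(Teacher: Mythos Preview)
Your direct-method strategy is correct and matches the paper's architecture, but the paper exploits a structural shortcut that you miss, and this shortcut resolves precisely the two places where your argument is either hesitant (finiteness of $\mu_0$) or heavy (the compact embedding).

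The key observation, drawn directly from the Lane-Emden equation \eqref{EQ}, is the pointwise identity
\[
-\frac{[w^{1+\alpha}]'}{r\,w^\alpha}\;=\;\frac{1}{r^3}\int_0^r \frac{4\pi}{K^\alpha}\,w^\alpha s^2\,ds,
\]
whose right-hand side is uniformly bounded above and below on $(0,R)$. Hence the weight $-[w^{1+\alpha}]'\,r^3$ is \emph{pointwise equivalent} to $w^\alpha r^4$. This immediately gives finiteness of $\mu_0$ in one line: the positive part of $Q$ is $\precsim I$, so $Q(\phi)/I(\phi)\le C$. It also collapses your compactness program: the lower-order term in $Q$ and the constraint functional $I$ are equivalent weighted $L^2$ norms, so whatever strong convergence one establishes for $I$ transfers automatically to the positive part of $Q$; there is no need for a separate Hardy analysis of the weight $-[w^{1+\alpha}]'r^3$ near the vacuum boundary. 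Your localization-plus-Hardy route for the compact embedding $H\hookrightarrow L^2(w^\alpha r^4\,dr)$ is the standard general approach and would work, but here the Lane-Emden structure does most of the labor for you.

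Two smaller points. For positivity the paper simply takes $\phi\equiv 1$: then $\phi'=0$, the gradient term in $Q$ vanishes identically, and $Q(1)>0$ follows at once from $[w^{1+\alpha}]'<0$; your talk of ``scaling the radial variable'' is unnecessary. For finiteness, your attempted rearrangement of $\|\phi\|^2$ is circuitous and, as you yourself flag, does not close; the one-line bound $Q\le C\,I$ via the Lane-Emden identity above is the clean argument.
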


\begin{proof} The first part in $Q$ is negative and the second part in $Q$ is positive because $w'<0$ and $4-3\frac{1+\alpha}{\alpha}>0$ since $\alpha>3$. Thus we first want to make sure that $\mu_0>0$. This can be easily checked by taking $\phi=1$: 
\[
\mu_0\geq \frac{Q(1)}{I(1)}= \frac{ 0 -(4-3\frac{1+\alpha}{\alpha} )\int_0^R{[w^{1+\alpha}]'}\, r^3 dr}{\int_0^R w^\alpha r^4 dr}>0 .
\]
Note that the positive part (the second term) of $Q$ is uniformly bounded by $I$ because from \eqref{EQ}, we have  that $-\frac1{rw^\alpha}[w^{1+\alpha}]' =\frac{1}{r^3}\int_0^r\frac{4\pi}{K^\alpha} w^\alpha s^2 ds$ is bounded. 
This implies that $\mu_0$ is finite. To verify that $\mu_0$ is attained on $H$, we will need a compactness argument. Let $\phi_n$ be a maximizing sequence so that 
\[
Q(\phi_n) \nearrow \mu_0\; \text{ as }\; n\rightarrow \infty \;\text{ and }\; I(\phi_n)=1 \;\text{ for all }n. 
\]
Let $\phi_0$ be its weak limit. By the lower semicontinuity of weak convergence, we have 
\[
\begin{split}
&\liminf \int_0^R  w^{1+\alpha} r^4(\phi'_n)^2 dr  \geq \int_0^R w^{1+\alpha} r^4(\phi_0')^2 dr \\
&\liminf \int_0^R  w^{\alpha} r^4(\phi_n)^2 dr  \geq \int_0^R w^{\alpha} r^4(\phi_0)^2 dr \\
\end{split}
\]
The next step  is to prove the following compactness result. 

\begin{claim} The mass is not lost in the limit: $I(\phi_0)=1$. Moreover, we have the compactness result for the positive part of $Q$: there exists a subsequence $\{\phi_{n_k}\}$ of $\{\phi_n\}$ such that 
\begin{equation}\label{compact}
\int_0^R{-[w^{1+\alpha}]'}\, r^3 \phi_{n_k}^2 dr \rightarrow \int_0^R{-[w^{1+\alpha}]'}\, r^3 \phi_0^2 dr
\;\text{ as }\; n\rightarrow \infty. 
\end{equation}
\end{claim}

Since the above claim implies $Q(\phi_0)=\mu_0$ and $I(\phi_0)=1$, the conclusion of the proposition follows from the claim. 

The first claim can be shown by a standard scaling argument. Suppose $I(\phi_0)= \beta^2<1$. Then we would have 
\[
I(\frac{\phi_0}{\beta})=1\; \text{ and } \;Q(\frac{\phi_0}{\beta})=\frac{\mu_0}{\beta^2} >\mu_0
\]
which contradicts to the definition of $\mu_0$. Hence,  $I(\phi_0)=1$. Next, in order to see \eqref{compact}, we 
rewrite it by using \eqref{EQ}
\[
\int_0^R{-[w^{1+\alpha}]'}\, r^3 \phi_{n}^2 dr = \int_0^R  \left(\frac{1}{r^3}\int_0^r\frac{4\pi}{K^\alpha} w^\alpha s^2 ds\right) w^\alpha  r^4 \phi_n^2dr
\]
Then since $\frac{1}{r^3}\int_0^r\frac{4\pi}{K^\alpha} w^\alpha s^2 ds$ is uniformly bounded from below and above on $(0,R)$, the positive part of $Q$ is bounded from below and above by $I$: 
\[
\int_0^R{-[w^{1+\alpha}]'}\, r^3 \phi_{n}^2 dr = \int_0^R  \left(\frac{1}{r^3}\int_0^r\frac{4\pi}{K^\alpha} w^\alpha s^2 ds\right) w^\alpha  r^4 \phi_n^2dr \sim \int_0^R w^\alpha r^4 \phi_n^2 dr =I(\phi_n)
\]
Now since $I(\phi_n) \rightarrow I(\phi_0)$ with both being one, \eqref{compact} follows. 
\end{proof}

It is straightforward to check that $\phi_0$ is in fact an eigenfunction corresponding to $\mu_0$ for \eqref{L}: $L\phi_0=\mu_0 w^\alpha r^4\phi_0$ and  therefore, $\zeta(t,r)=e^{\sqrt{\mu_0} t}\phi_0(r)$ is a largest growing mode solution to the linearized Euler-Poisson equation \eqref{zetaL}.

\subsection{The behavior of $\phi_0$ near the origin and near the boundary}\label{better}

We record the second-order ordinary differential equation which the growing mode $\phi_0$ satisfies: 
\begin{equation}\label{phi0}
\mu_0 w^\alpha r^4\phi_0 = \frac{1+\alpha}{\alpha} [w^{1+\alpha} r^4\phi_0']' - (4-3\frac{1+\alpha}{\alpha} ){[w^{1+\alpha}]'}\, r^3 \phi_0
\end{equation}
We can further deduce the regularity of $\phi_0$ from \eqref{phi0} based on the  behavior of the Lane-Emden solution ${w}$. To do so, we first derive the higher regularity and integrability of $w$. 

\begin{lemma}[Regularity of $w$] \label{lem2.1} Let $1<\alpha<5$ be given. Let $w$ be a ball-type solution to the Lane-Emden equation \eqref{L-E}. Then 
\begin{enumerate}
\item  $w$ is analytic near the origin. Moreover, 
\[
w(r)=1- b r^2+O(r^4), \;\;r\sim 0
\]
for some positive constant $b>0$. Also, $(\partial_r^{2k+1}w)(0)=0$ for any non-negative integer $k\geq0$. 

\item $\partial_r^iw$ is uniformly bounded on $(0,R)$ for each $0\leq i\leq \alpha+2$ and also $w^{\frac{k-1}{2}}\partial_r^{k+1}w$ is uniformly bounded on $(0,R)$ for each $1\leq k\leq 2\alpha+1$. 
In addition, $w$ enjoys the following integral regularity. 
\[
\int_0^R w^{\alpha+j} r^4|\partial_r^{j+1} w|^2 dr <\infty
\]
for each $0\leq j< 3\alpha +3$.
\end{enumerate}
\end{lemma}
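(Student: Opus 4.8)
The plan is to analyze the Lane-Emden ODE \eqref{L-E} separately near the origin (where the coefficient $2/r$ is singular but $w$ is smooth and positive) and near the vacuum boundary $r=R$ (where $w$ degenerates like $(R-r)$ but the equation is regular). For part (1), I would set up the standard power-series argument: since $w(0)=1$, $w_r(0)=0$, the point $r=0$ is a regular singular point of Fuchsian type, and the analytic solution is obtained by writing $w=\sum_{k\ge0}a_k r^{2k}$ and solving the recursion coming from \eqref{L-E}; the nonlinear term $w^\alpha$ is analytic in a neighborhood of $w=1$, so a majorant/fixed-point argument in a space of analytic functions gives a genuine analytic solution on a small ball, which must coincide with $w$ by uniqueness. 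Matching the first coefficients gives $a_0=1$, and feeding into \eqref{L-E} forces $6a_1 + \frac{4\pi}{(1+\alpha)K^\alpha}=0$, i.e. $b=-a_1=\frac{2\pi}{3(1+\alpha)K^\alpha}>0$; the absence of odd powers (hence $(\partial_r^{2k+1}w)(0)=0$) follows because the recursion only couples even coefficients, or equivalently from the evenness of the solution under $r\mapsto -r$.

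For part (2), the boundedness of $\partial_r^i w$ for $0\le i\le 2$ is immediate from (1) near the origin and from \eqref{L-E} on any compact subinterval; the subtlety is at $r=R$. Here I would use the known boundary behavior \eqref{behavior}, i.e. $w\sim (R-r)$ and more precisely $w_r(R)\in(-\infty,0)$, and then \emph{differentiate \eqref{L-E} repeatedly} to get recursive ODEs for $\partial_r^k w$. Each differentiation of the term $w^\alpha$ produces, by the chain rule and Faà di Bruno, factors of the form $w^{\alpha-m}$ times products of lower derivatives; schematically $\partial_r^{k} w$ solves a linear second-order ODE whose forcing involves $w^{\alpha-1}\partial_r^k w$ and lower-order products, and near $r=R$ the indicial analysis shows the generic solution behaves like $(R-r)^{\alpha+1-k}$ while the good branch behaves like $(R-r)$ — but what we actually need is only that $w^{(k-1)/2}\partial_r^{k+1}w$ stays bounded, which, given $w\sim(R-r)$, amounts to showing $\partial_r^{k+1}w = O\!\big((R-r)^{-(k-1)/2}\big)$. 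I would prove this bound by induction on $k$: assuming the bounds for all lower derivatives, the ODE for $\partial_r^{k+1}w$ near $R$ is, after dividing by the leading coefficient, an Euler-type equation $u'' + \frac{c}{R-r}u' + \dots = (\text{forcing of size }(R-r)^{-(k+1)/2})$, whose solutions match the asserted rate. The integral regularity $\int_0^R w^{\alpha+j}r^4|\partial_r^{j+1}w|^2\,dr<\infty$ for $0\le j<3\alpha+3$ then follows by combining the pointwise bound $|\partial_r^{j+1}w|\precsim w^{-(j-1)/2}=w^{-(j-1)/2}$ near $R$ (valid for $j\le 2\alpha+1$; for larger $j$ one uses a slightly weaker bound together with the large exponent $\alpha+j$ on $w$) with $w\sim(R-r)$: the integrand is then $\precsim (R-r)^{\alpha+j-(j-1)} = (R-r)^{\alpha+1}$ in the sharp regime, and more generally one checks the exponent of $(R-r)$ stays strictly above $-1$ precisely when $j<3\alpha+3$, which is exactly the stated threshold; near the origin there is no issue since all derivatives are bounded there.

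The main obstacle I expect is the boundary induction in part (2): one must track exactly how negative a power of $w$ each derivative can develop, because a naive chain-rule count loses powers of $(R-r)$ too quickly and would give a worse exponent than $(k-1)/2$. The key structural point to exploit is that each $r$-derivative landing on $w^{1+\alpha}$ inside \eqref{EQ}, or on $w^\alpha$ in \eqref{L-E}, costs \emph{one} power of $w^{-1}$ only when it hits the lowest-order factor, and the top derivative $\partial_r^{k+1}w$ always appears linearly and \emph{multiplied by a positive power of $w$} (coming from the $w^{1+\alpha}$ or $w^\alpha$ weight), which is precisely the mechanism that makes $w^{(k-1)/2}\partial_r^{k+1}w$ — rather than $\partial_r^{k+1}w$ itself — the right bounded quantity. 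Making this bookkeeping precise, e.g. via an induction hypothesis of the form "$w^{(i-1)/2}\partial_r^{i+1}w$ is bounded for all $i\le k$" plugged into the differentiated equation, is the crux; once the pointwise bounds are in hand, the integral bounds and the origin analysis are routine.
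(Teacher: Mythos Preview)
Your treatment of part (1) is fine and essentially parallel to the paper's: both extract the Taylor coefficients from \eqref{L-E}, show all odd derivatives vanish at $r=0$, and conclude analyticity. Your invocation of a majorant/fixed-point argument is if anything more explicit than the paper's inductive computation.

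Part (2), however, rests on a misreading of the boundary structure. You write that the differentiated equation, ``after dividing by the leading coefficient,'' becomes an Euler-type equation $u''+\tfrac{c}{R-r}u'+\cdots$ near $r=R$. This is not so. The Lane-Emden equation \eqref{L-E} has \emph{regular} coefficients near $R$ --- the only singular coefficient is $2/r$, which is harmless there --- so repeated differentiation never produces a $\tfrac{1}{R-r}$ factor in front of the top-order terms. (You may be thinking of the eigenfunction equation \eqref{phi0}, where the weight $w^{1+\alpha}$ in $L$ genuinely creates an $\tfrac{w'}{w}\sim\tfrac{-c}{R-r}$ coefficient after dividing through; but that is a different equation.) Consequently there is no indicial analysis to do at $R$, and the machinery you propose is aimed at a structure that is not present.

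More concretely, the paper's argument is much sharper and far simpler than your induction target. Writing $w_{rr}=-\tfrac{2}{r}w_r-\mathfrak{c}w^\alpha$ and differentiating $j-2$ times, the only unbounded contribution near $R$ comes from repeatedly hitting $w^\alpha$, and one reads off directly that
\[
|\partial_r^{j}w|=O(1)+O(w^{\alpha-j+2}),
\]
with no Euler equation, no Fa\`a di Bruno, and no inductive rate-matching. This bound is strictly stronger than your proposed $|\partial_r^{k+1}w|=O((R-r)^{-(k-1)/2})$ whenever $k<2\alpha+1$, and it is exactly this sharper exponent $\alpha-j+1$ (for $\partial_r^{j+1}w$) that is needed to reach the integral threshold $j<3\alpha+3$: one gets $\int w^{\alpha+j}|\partial_r^{j+1}w|^2\precsim\int w^{3\alpha-j+2}$, finite iff $3\alpha-j+2>-1$. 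Your weaker rate only covers $j\le 2\alpha+1$, and you acknowledge that ``for larger $j$ one uses a slightly weaker bound'' without saying what it is; but any bound weaker than the paper's will not recover the full range $j<3\alpha+3$. So the gap is real: drop the Euler-type scaffolding and instead bound $\partial_r^j w$ directly from the differentiated equation.
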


The proof is elementary, but it has not been found in the literature, so we will provide a brief   argument. 

\begin{proof} 
We start with the first item. Write the Lane-Emden equation \eqref{L-E} as 
\begin{equation}\label{22}
rw_{rr}+2w_r+ \mathfrak{c} r w^{\alpha}=0
\end{equation}
where $\mathfrak{c}=\frac{4\pi}{(1+\alpha)K^\alpha}>0$ is a constant. After dividing \eqref{22} by $r$ and taking the limit $r\rightarrow 0^+$, we immediately deduce that $3w_{rr}(0)+\mathfrak{c}=0 $.  Take $\partial_r$ of \eqref{22}: 
\begin{equation}\label{221}
rw_{rrr}+ 3w_{rr}+\mathfrak{c} ( \alpha rw^{\alpha-1} w_r + w^{\alpha})=0
\end{equation}
By dividing it by $r$ and using the fact that $3w_{rr}(0)+\mathfrak{c}=0 $, 
\[
w_{rrr}+ \frac{3w_{rr}-3w_{rr}(0)}{r}+\mathfrak{c} ( \alpha w^{\alpha-1} w_r + \frac{w^{\alpha}-w^{\alpha}(0)}{r})=0
\]
and by passing to the limit $r\rightarrow 0^+$, we see that $4 w_{rrr}(0)=0$. 
Take one more derivative 
\begin{equation}\label{222}
rw_{rrrr}+ 4w_{rrr}+\mathfrak{c} (\underbrace{ \alpha rw^{\alpha-1} w_{rr} + 2\alpha w^{\alpha-1} w_r}_{=-\alpha \mathfrak{c} r w^{2\alpha-1}}
+\alpha(\alpha-1)rw^{\alpha-2} (w_r)^2)=0
\end{equation}
and we deduce that $5w_{rrrr}(0)-\alpha\mathfrak{c}^2=0$. More generally, we obtain the following. 

\begin{claim}
For any $k\geq 1$, the solutions to \eqref{22} satisfy the following
\begin{equation}\label{23}
r\partial_r^{2k}w+2k \partial_r^{2k-1} w +\mathfrak{c}\partial_r^{2k-2}(rw^\alpha) =0
\end{equation}
for $0<r<R$ such that 

 (a) $\partial_r^{2k-2}(rw^\alpha)(0)=0$ and $\partial_r^{2k}(rw^\alpha)(0)=0$

(b) $ (\partial_r^{2k-1} w)(0)=0$. 
\end{claim}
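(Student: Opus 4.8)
The plan is to prove the Claim by \emph{strong} induction on $k$, using only the Lane--Emden equation \eqref{22}, the normalized conditions $w(0)=1$ and $w_r(0)=0$, and the classical fact that $w$ is smooth on the open interval $(0,R)$, so that every differentiation below is legitimate and the one-sided limits at $r=0$ exist. Two elementary tools will recur: the Leibniz identity $\partial_r^m(rg)(0)=m\,g^{(m-1)}(0)$ valid for any $g$ smooth near $0$, and the Fa\`a di Bruno expansion of $\partial_r^m(w^\alpha)$ as a polynomial combination of monomials $w^{\alpha-p}\prod_{i=1}^{p}\partial_r^{l_i}w$ with $l_1+\cdots+l_p=m$ and each $l_i\ge 1$ (recall $w(0)=1>0$, so $w^\alpha$ is smooth near $0$ even though $\alpha\notin\mathbb{Z}$). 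For the base case $k=1$: the identity \eqref{23} at level $1$ is \eqref{22} itself; part (b) is the boundary condition $w_r(0)=0$; and part (a) is immediate from the Leibniz identity, which gives $\partial_r^0(rw^\alpha)(0)=0$ and $\partial_r^2(rw^\alpha)(0)=2\alpha w^{\alpha-1}(0)w_r(0)=0$.

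For the inductive step, assume the Claim through level $k$; in particular every odd-order derivative of $w$ of order $\le 2k-1$ vanishes at $0$, and $\partial_r^{2k}(rw^\alpha)(0)=0$. Applying $\partial_r$ once to \eqref{23} at level $k$, and using $\partial_r(r\,\partial_r^{2k}w)=\partial_r^{2k}w+r\,\partial_r^{2k+1}w$, produces the first-order-in-$r$ identity $r\,\partial_r^{2k+1}w+(2k+1)\partial_r^{2k}w+\mathfrak{c}\,\partial_r^{2k-1}(rw^\alpha)=0$; applying $\partial_r$ once more yields exactly \eqref{23} at level $k+1$. That part is routine. The substance is extracting the pointwise consequences at $r=0$, which I do by a single recurring device: given a first-order-in-$r$ identity $r\,(\text{top term})+(\text{free coefficient})=0$ whose free coefficient vanishes at $0$ by the previous step, divide by $r$ and apply L'H\^opital. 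Running this device on \eqref{23} at level $k$ gives the auxiliary relation $(2k+1)\partial_r^{2k}w(0)+\mathfrak{c}\,\partial_r^{2k-1}(rw^\alpha)(0)=0$. Running it on the once-differentiated identity displayed above — whose free coefficient is precisely that combination, hence vanishes at $0$ — and then discarding the term $\mathfrak{c}\,\partial_r^{2k}(rw^\alpha)(0)=0$, produces $(2k+2)\partial_r^{2k+1}w(0)=0$, which is part (b) at level $k+1$.

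It remains to verify part (a) at level $k+1$. Its first assertion is $\partial_r^{2(k+1)-2}(rw^\alpha)(0)=\partial_r^{2k}(rw^\alpha)(0)=0$, which is already in the inductive hypothesis, so only $\partial_r^{2k+2}(rw^\alpha)(0)=0$ is new. By the Leibniz identity this equals $(2k+2)\,\partial_r^{2k+1}(w^\alpha)(0)$, so it suffices to show $\partial_r^{2k+1}(w^\alpha)(0)=0$. In the Fa\`a di Bruno expansion each monomial of $\partial_r^{2k+1}(w^\alpha)$ has exponents summing to the odd number $2k+1$, so at least one $l_i$ is odd with $1\le l_i\le 2k+1$; by the accumulated odd-order vanishing from levels $1,\dots,k$ together with part (b) at level $k+1$ just established, the factor $\partial_r^{l_i}w$ vanishes at $0$. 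Hence every monomial vanishes at $0$, and the induction closes.

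The main obstacle throughout is the coordinate singularity at $r=0$: one cannot simply set $r=0$ in the differentiated Lane--Emden equations, since the lower-order coefficients blow up there. The division-by-$r$ device above is exactly what removes this difficulty, and it works only because the relevant singular combination has already been shown to vanish at $0$ at the preceding inductive level — which is why the induction must be carried with the strengthened hypothesis recording \emph{all} lower odd-order derivatives of $w$, not merely the top one. A secondary, but essential, point is that non-integrality of $\alpha$ forbids treating $w^\alpha$ as a polynomial in $w$; the term $w^\alpha$ must instead be handled through Fa\`a di Bruno together with the parity count used above.
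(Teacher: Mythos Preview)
Your proof is correct and follows essentially the same induction-and-L'H\^opital scheme as the paper: differentiate \eqref{23}, divide by $r$, pass to the limit, and use the previously established vanishing to close the step. The only notable difference is that where the paper says ``the direct computation shows that $\partial_r^{2n+2}(rw^\alpha)(0)=0$,'' you make this explicit via the Leibniz identity and a Fa\`a di Bruno parity argument; this is a clean and welcome elaboration rather than a different approach.
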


We will prove the claim by induction on $k$. First note that $(a)$ and $(b)$ are satisfied for $k=1$ and $k=2$ by the above computations. Now let us assume that it is true for $k=n$. Then by dividing \eqref{23} when $k=n$ by $r$ and taking the limit $r\rightarrow 0^+$, we deduce that 
\begin{equation}\label{24}
(2n+1) (\partial_r^{2n}w)(0)+ l_n=0
\end{equation}
where $l_n=\lim_{r\rightarrow 0^+} [ \mathfrak{c}\partial_r^{2n-2}(rw^\alpha)/r]$. 
Now we formally take $\partial_r$ of \eqref{23}: 
\[
r\partial_r^{2n+1}w+(2n+1) \partial_r^{2n} w +   \mathfrak{c}\partial_r^{2n-1}(rw^\alpha)  =0
\] 
By using \eqref{24}, we see that 
\[
\partial_r^{2n+1}w+(2n+1)\frac{ \partial_r^{2n} w -   (\partial_r^{2n}w)(0) }{r} +  \frac{ \mathfrak{c}\partial_r^{2n-1}(rw^\alpha)  -l_n}{r}=0
\]
Take the limit to get 
\[
(2n+2) (\partial_r^{2n+1}w)(0) +\mathfrak{c} \partial_r^{2n}(rw^\alpha)(0) =0
\]
Since $\partial_r^{2n}(rw^\alpha)(0)=0$, we deduce that $ (\partial_r^{2n+1}w)(0)=0$. Also,  the direct computation shows that $\partial_r^{2n+2}(rw^\alpha)(0)=0$. And thus $(a)$ and $(b)$ hold for $k=n+1$. Hence, we deduce that $w$ is infinitely differentiable at $r=0$ and the above estimation on the derivatives implies the analyticity of $w$ near the origin. This finishes the above claim as well as the first part of the lemma. 

In order to prove the point 2 of the lemma, it suffices to look at $w$ near $r=R$ since $w$ is regular near the origin. Recall that $w_r= O(1)$ for $r\sim R$. 
From \eqref{22}, we see that  $w_{rr}= O(1)+O(w^\alpha)$ for $r\sim R$. Similarly, from \eqref{221} and \eqref{222}, $w_{rrr}=O(1)+O(w^{\alpha-1})$ and $w_{rrrr}=O(1)+ O(w^{\alpha-2})$. Inductively, we obtain 
$|\partial_r^{j} w|=O(1)+O(w^{\alpha-j+2})$. Note that $\partial_r^jw$ is not bounded any longer if $j>\alpha +2$.  However, the integral 
\[
\int_0^R w^{\alpha+j}r^4|\partial_r^{j+1} w|^2 dr \leq C+ C\underbrace{\int_0^R w^{\alpha+j}w^{2(\alpha-j+1)} dr}_{ = \int w^{3\alpha -j+2}dr}
\]
stays bounded as long as $j<3\alpha+3$. On the other hand, if we consider $w^{\frac{k-1}{2}}\partial_r^{k+1}w$, then we see that $|w^{\frac{k-1}{2}}\partial_r^{k+1}w|=O(1)+ O(w^{\alpha-k+1+\frac{k-1}{2}} )$, which is uniformly bounded if $\alpha-k+1+\frac{k-1}{2}\geq0\Rightarrow 1\leq k\leq 2\alpha+1$.  
This completes the proof of the lemma. 
\end{proof}

The following lemma concerns the behavior of $\phi_0$ near the origin. 

\begin{lemma}\label{lem3.3} Let $\phi_0\in H$ be the solution to \eqref{phi0}. 
$\phi_0$ is analytic at $r=0$ and moreover, $\phi_0=a+O(r)$ around the origin where $a$ is a constant.  
\end{lemma}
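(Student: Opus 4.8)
The plan is to analyze the ODE \eqref{phi0} as a linear second-order equation for $\phi_0$ near $r=0$ and extract the indicial structure. First I would rewrite \eqref{phi0} in non-divergence form by expanding the derivative: since $[w^{1+\alpha}r^4\phi_0']' = w^{1+\alpha}r^4\phi_0'' + [(1+\alpha)w^\alpha w' r^4 + 4w^{1+\alpha}r^3]\phi_0'$, dividing by $\frac{1+\alpha}{\alpha}w^{1+\alpha}r^4$ gives an equation of the form
\[
\phi_0'' + \left(\frac{4}{r} + \frac{(1+\alpha)w'}{w}\right)\phi_0' + \left(\frac{\alpha(4-3\frac{1+\alpha}{\alpha})}{(1+\alpha)}\frac{w' }{w\, r} - \frac{\mu_0\alpha}{(1+\alpha)w}\right)\phi_0 = 0.
\]
By Lemma \ref{lem2.1}(1), near $r=0$ we have $w(r) = 1 - br^2 + O(r^4)$ with $w$ analytic, so $w$ is a unit (nonvanishing analytic function) and $w'/w = -2br + O(r^3)$ is analytic and vanishes at the origin; hence $w'/(wr) = -2b + O(r^2)$ is analytic. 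Therefore the coefficient of $\phi_0'$ is $\frac{4}{r} + (\text{analytic})$ and the coefficient of $\phi_0$ is analytic. This means $r=0$ is a regular singular point with the leading singular term $\frac{4}{r}\phi_0'$, exactly the pattern of the radial Laplacian in dimension $5$.

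Next I would compute the indicial equation. Writing $\phi_0 \sim r^s$, the lowest-order balance comes from $\phi_0'' + \frac{4}{r}\phi_0'$, giving $s(s-1) + 4s = s(s+3) = 0$, so the indicial roots are $s=0$ and $s=-3$. Since the roots differ by the integer $3$, standard Frobenius theory (e.g.\ via the Fuchsian theory of regular singular points) guarantees one solution analytic at $r=0$ of the form $\phi_0 = a + c_1 r + c_2 r^2 + \cdots$ with $a\neq 0$ possible, and a second linearly independent solution that is either $\sim r^{-3}$ or has a $\log r$ term — in either case \emph{not} bounded at the origin. Because $\phi_0 \in H$ and the norm on $H$ controls $\int_0^R w^\alpha r^4 \phi_0^2\,dr$ and $\int_0^R w^{1+\alpha}r^4(\phi_0')^2\,dr$, a solution behaving like $r^{-3}$ (or $r^{-3}\log r$, etc.) would force $\int_0 r^4 \cdot r^{-6}\,dr = \int_0 r^{-2}\,dr = \infty$, so such a component is excluded; hence $\phi_0$ must be (a multiple of) the analytic Frobenius solution, giving $\phi_0 = a + O(r)$ near the origin. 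To pin down that the expansion actually starts with a nonzero constant and to justify "analytic," I would invoke that the recursion for the analytic Frobenius solution has coefficients determined by an analytic forcing with no obstruction at $s=0$, so the power series has positive radius of convergence.

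The main obstacle I anticipate is the careful handling of the double-root-difference-integer situation in Frobenius theory — specifically, verifying that the bad second solution is genuinely excluded by the $H$-norm rather than merely arguing heuristically, and confirming there is no hidden logarithmic term contaminating the \emph{good} solution (there is not, since the good solution corresponds to the larger indicial root $s=0$, for which Frobenius always yields a pure power series). A secondary technical point is that the coefficient functions involve $1/w$, which is only harmless because Lemma \ref{lem2.1}(1) guarantees $w(0)=1>0$ and $w$ analytic near $0$; I would state this dependence explicitly. Everything else — reducing to non-divergence form, reading off the indicial exponents, and matching the claimed expansion $\phi_0 = a + O(r)$ — is routine once the regular-singular-point structure is identified.
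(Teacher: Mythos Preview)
Your approach is essentially identical to the paper's: rewrite \eqref{phi0} in non-divergence form, invoke Lemma~\ref{lem2.1} to see that $r=0$ is a regular singular point, compute the indicial roots $0$ and $-3$, and use membership in $H$ to rule out the singular Frobenius branch. The paper's proof is slightly terser but follows exactly this line (citing the Frobenius theorem from \cite{B}); your added remarks on why the $r^{-3}$ branch fails the $H$-integrability test and why no logarithm contaminates the larger-root solution are correct elaborations. One minor arithmetic slip: in your displayed equation the coefficient of the $\tfrac{w'}{wr}\phi_0$ term should be $-\alpha\bigl(4-3\tfrac{1+\alpha}{\alpha}\bigr)$ rather than $+\tfrac{\alpha(4-3\tfrac{1+\alpha}{\alpha})}{1+\alpha}$, but since this term is analytic at $r=0$ either way, it does not affect the indicial analysis.
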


\begin{proof} We rewrite \eqref{phi0} in the following form 
\[
\phi_0''+\left[\frac{4}{r} + \frac{(1+\alpha)w' }{w}\right]\phi_0' -\left [\alpha(4-3\frac{1+\alpha}{\alpha})\frac{w'}{rw} +\frac{\alpha \mu_0}{1+\alpha}\frac{1}{w}     \right] \phi_0=0 
\]
Let $P(r)$ and $Q(r)$ be coefficients of $\phi_0'$ and $\phi_0$ respectively. Then by Lemma \ref{lem2.1}, $rP(r)$ and $r^2Q(r)$ are analytic at $r = 0$, which implies $r=0$ is a regular singular point. Therefore, we may employ the Frobenius method \cite{B} (p.215).  Let $p_0$ and $q_0$ be the zeroth order term of $rP(r)$ and $r^2Q(r)$  respectively. It is easy to see that $p_0=4$ and $q_0=0$. The indicial equation $r(r-1)+p_0r+q_0=0$ has two roots $r_1=0$ and $r_2=-3$. Hence, by the Frobenius theorem, $\phi_0$ has a power series representation 
of $y_1(r)=\sum_{n=0}^\infty a_n r^n$ or $y_2(r)= y_1(r)\ln r+ r^{-3} \sum_{n=0}^\infty b_n r^n$. However, 
$y_2(r)$ is impossible since $\phi_0\in H$ and hence $\phi_0=\sum_{n=0}^\infty a_n r^n$. 
\end{proof}

Next, we show that $\phi_0$ enjoys the better integrability near the boundary than the integrability dictated by $H$ in Proposition \ref{prop3.1}. This will be done through the Hardy inequality and a bootstrapping argument through the equation. In addition, we show that $\phi_0$ indeed belongs to some weighted Sobolev spaces. 

\begin{lemma}\label{lem3.4} Let $\phi_0$ be the solution to \eqref{phi0}. Then 
\begin{enumerate}
\item  $\phi_0$ has the following integrability: for any $0\leq \beta\leq \alpha$, 
\[
\int_0^R w^{\alpha-\beta} r^4\phi_0^2dr +\int_0^R w^{1+\alpha -\beta} r^4 (\phi_0')^2dr \;<\infty. 
\]
Moreover, for any $z>1$, 
$$\int_0^R w^{z-2}{r^4\phi_0^2}dr <\infty.$$
\item $\phi_0$ has the following regularity: for $1\leq k\leq 2\alpha+1$, 
\[
\int_0^R w^{1+\alpha+k}r^4 (\partial_r^{k+1}\phi_0)^2 dr<\infty. 
\]
\end{enumerate} 
\end{lemma}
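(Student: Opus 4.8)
The plan is to prove the two integrability statements for $\phi_0$ by a bootstrapping argument that plays the second-order ODE \eqref{phi0} against weighted Hardy inequalities near the vacuum boundary $r = R$, using the boundary behavior $w \sim (R-r)$ of the Lane-Emden profile recorded in Lemma \ref{lem2.1}. Throughout, the origin $r = 0$ causes no trouble: by Lemma \ref{lem3.3}, $\phi_0$ is analytic there and $\phi_0 = a + O(r)$, so all the integrals are trivially finite on $[0, R/2]$, and the entire analysis concentrates on a neighborhood of $r = R$, where $w \approx R - r$.

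\smallskip

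\noindent\textbf{Step 1: the base integrability and the Hardy inequality near $r = R$.} Since $\phi_0 \in H$, we have for free that $\int_0^R w^{1+\alpha} r^4 (\phi_0')^2\,dr < \infty$ and, via \eqref{EQ} which gives $-[w^{1+\alpha}]' \sim w^\alpha$ near $R$ (because the potential factor $\frac{1}{r^3}\int_0^r \frac{4\pi}{K^\alpha} w^\alpha s^2\,ds$ is bounded above and below there), also $\int_0^R w^\alpha r^4 \phi_0^2\,dr < \infty$. The engine of the improvement is a weighted one-dimensional Hardy inequality of the form: if $\int_0^R w^{b} (\phi_0')^2\,dr < \infty$ and $b - 1 < 1$, i.e. $b < 2$ — or more generally at the scale where the weight $(R-r)^{b}$ is integrable against the appropriate power — then $\int_0^R w^{b-2} \phi_0^2\,dr \lesssim \int_0^R w^{b}(\phi_0')^2\,dr + (\text{boundary/lower-order})$. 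Since $w$ behaves like the distance to $R$, this is exactly the classical Hardy inequality $\int_0^\epsilon x^{b-2} g^2\,dx \lesssim \int_0^\epsilon x^b (g')^2\,dx$ valid for $b < 1$, applied with $x = R - r$; the $r^4$ factor is harmless since $r \sim R$ is bounded away from $0$. I would state this as a lemma (or cite the localized Hardy inequalities the introduction promises) and apply it iteratively: starting from the weight exponent $1 + \alpha$ on $(\phi_0')^2$, each application lowers the exponent on $\phi_0^2$ by $2$, and then re-running the ODE converts that back into control of $(\phi_0')^2$ at a lowered weight, producing the family $0 \le \beta \le \alpha$ in item (1). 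One must check the Hardy inequality remains applicable at each stage, i.e. that the relevant exponent stays in the admissible range; this is where the hypothesis $6/5 < \gamma < 4/3$, i.e. $3 < \alpha < 5$, is used — it is exactly the range that makes the bootstrap terminate at $\beta = \alpha$ (equivalently makes $\int w^{z-2} r^4 \phi_0^2\,dr$ finite for all $z > 1$) without the weight becoming too singular to integrate.

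\smallskip

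\noindent\textbf{Step 2: bootstrapping the ODE to get control of $(\phi_0')^2$ at the improved weights.} Given $\int_0^R w^{\alpha - \beta} r^4 \phi_0^2\,dr < \infty$, I would multiply \eqref{phi0} — written in the divergence form $\frac{1+\alpha}{\alpha}[w^{1+\alpha} r^4 \phi_0']' = \mu_0 w^\alpha r^4 \phi_0 + (4 - 3\tfrac{1+\alpha}{\alpha})[w^{1+\alpha}]' r^3 \phi_0$ — by a suitable weighted test function (essentially $w^{-\beta}\phi_0$ with a cutoff near $R$), integrate by parts, and estimate the right-hand side using $-[w^{1+\alpha}]' \sim w^\alpha$ near $R$ and the Step 1 bound, plus a Cauchy–Schwarz absorption of a borderline term. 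This yields $\int_0^R w^{1+\alpha - \beta} r^4 (\phi_0')^2\,dr < \infty$, closing the loop for item (1). For item (2), I would differentiate the ODE $k$ times. Using Lemma \ref{lem2.1}(2), the coefficients $[w^{1+\alpha}]', w^{1+\alpha}$ and their derivatives satisfy $|\partial_r^j w| = O(1) + O(w^{\alpha - j + 2})$, so differentiating \eqref{phi0} $k$ times produces an equation for $\partial_r^{k+1}\phi_0$ whose forcing terms involve lower-order derivatives of $\phi_0$ weighted by powers of $w$ that (by the precise bookkeeping) are controlled once we multiply by the weight $w^{1+\alpha+k} r^4$ and integrate — exactly matching the stated range $1 \le k \le 2\alpha + 1$, which is the same range appearing in Lemma \ref{lem2.1}(2) for $w^{(k-1)/2}\partial_r^{k+1}w$ being bounded. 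So item (2) follows by an induction on $k$, at each step integrating the $k$-times-differentiated equation against $w^{1+\alpha+k} r^4 \partial_r^{k+1}\phi_0$ and invoking the previously established bounds together with Lemma \ref{lem2.1}.

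\smallskip

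\noindent\textbf{Main obstacle.} The hard part is the careful weight accounting near $r = R$ in Step 2 — ensuring that every term generated by the integration by parts or by differentiating the equation is genuinely lower-order, i.e. absorbable into the left side or controlled by the induction hypothesis, and that the exponents never stray outside the range where the Hardy inequality applies and the weighted integrals converge. In particular one must track the boundary terms at $r = R$ produced by integration by parts and verify they vanish (or are finite) thanks to the degeneracy of $w$, and one must confirm that the constant in the Hardy-type inequality is compatible with the coefficient $\frac{1+\alpha}{\alpha}$ so that the absorption argument closes. This is precisely the degenerate-weight bookkeeping that the introduction flags as the central technical difficulty, and it is where the constraint $3 < \alpha < 5$ is essential — both endpoints are sharp for the bootstrap to run all the way to $\beta = \alpha$ and $k = 2\alpha + 1$.
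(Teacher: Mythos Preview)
Your strategy matches the paper's: for item (1), alternate the Hardy inequality near $r=R$ with multiplication of \eqref{phi0} by $w^{-\beta}\phi_0$ and integration by parts; for item (2), differentiate the equation and run a weighted induction using Lemma~\ref{lem2.1}. The outline would close.

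Two corrections are worth making. First, your attribution of the constraints $\beta\le\alpha$ and $k\le 2\alpha+1$ to the hypothesis $3<\alpha<5$ is wrong. In the paper's argument, multiplying \eqref{phi0} by $w^{-\beta}\phi_0$ and integrating by parts produces, besides $\int w^{1+\alpha-\beta}r^4(\phi_0')^2\,dr$, the term $\tfrac{\alpha-\beta}{2}\int (w')^2 w^{\alpha-\beta-1}r^4\phi_0^2\,dr$; the restriction $\beta\le\alpha$ is precisely what keeps this coefficient nonnegative so the estimate closes. Likewise $k\le 2\alpha+1$ is inherited directly from the range in Lemma~\ref{lem2.1}(2) for which $w^{(j-1)/2}\partial_r^{j+1}w$ stays bounded. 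Neither endpoint has anything to do with $3<\alpha<5$; that range enters only in Proposition~\ref{prop3.1} to produce the growing mode in the first place. (Relatedly, the Hardy inequality you invoke is the case $k>1$ of Lemma~\ref{hardy-G}, not ``$b<1$''; the condition $z>1$ in the statement is exactly the Hardy threshold, again independent of $\alpha$.)

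Second, for item (2) the paper uses a slightly cleaner multiplier than yours. Rather than testing the $k$-times differentiated equation against $w^{1+\alpha+k}r^4\partial_r^{k+1}\phi_0$, it observes that the differentiated left side retains a divergence form,
\[
-\big[w\partial_r^{k+2}\phi_0+(k+1+\alpha)w'\partial_r^{k+1}\phi_0+\tfrac{4w}{r}\partial_r^{k+1}\phi_0\big]=-\tfrac{1}{w^{k+\alpha}r^4}\big[w^{k+1+\alpha}r^4\partial_r^{k+1}\phi_0\big]',
\]
and tests against $w^{k+\alpha}r^4\partial_r^{k}\phi_0$ (one derivative lower). A single integration by parts then yields $\int w^{1+\alpha+k}r^4|\partial_r^{k+1}\phi_0|^2\,dr$ on the left with no sign bookkeeping, and the right side $\int\mathcal{T}\cdot w^{k+\alpha}r^4\partial_r^k\phi_0\,dr$ factors via Cauchy--Schwarz into pieces controlled by the induction hypothesis and Lemma~\ref{lem2.1}. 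Your multiplier would also work after combining the two top-order terms, but the paper's choice avoids that step.
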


\begin{proof} We start with the first item. $\beta=0$ corresponds to the regularity of $H$ in which $\phi_0$ was constructed. From the regularity of $H$, and by using the Hardy inequality --  see Lemma \ref{hardy-G} as well as \eqref{Hardy-gw} -- we deduce that 
\[
\int_0^R w^{\alpha-1}r^4\phi_0^2 dr \precsim \int_0^R w^{\alpha+1}r^4\phi_0^2 dr + \int_0^R w^{\alpha+1}r^4(\phi_0')^2 dr <\infty. 
\]
In order to bootstrap this better integrability for $\phi_0$ into the one for $\phi_0'$, we will make use of the elliptic structure of the equation \eqref{phi0}.  Multiply \eqref{phi0} by $w^{-1}\phi_0$ and integrate it over $(0,R)$: 
\begin{equation}\label{astast}
\begin{split}
\underbrace{\int_0^R-\frac{1+\alpha}{\alpha}\frac{\phi_0}{w} [w^{1+\alpha} r^4\phi_0']'  dr }_{(\ast)}&+\int_0^R  \mu_0 w^{\alpha-1} r^4\phi_0^2 dr \\
& = \underbrace{\int_0^R  - (4-3\frac{1+\alpha}{\alpha} )\frac{{[w^{1+\alpha}]'}}{w}\, r^3 \phi_0^2 dr}_{(\ast\ast)}
\end{split}
\end{equation}
Note that by Lemma \ref{lem2.1}
\begin{equation}\label{ast2}
(\ast\ast)\precsim  \int_0^R w^{\alpha-1}r^4\phi_0^2 dr<\infty. 
\end{equation}
For the $(\ast)$, by integration by parts, we obtain 
\[
\begin{split}
\frac{\alpha}{1+\alpha}(\ast)& =   \int_0^R w^{\alpha} r^4 (\phi_0')^2 dr  -\int_0^R w' w^{\alpha-1} r^4 \phi_0\phi_0' dr\\
&=  \int_0^R w^{\alpha} r^4 (\phi_0')^2 dr  +\frac{\alpha-1}{2}\int_0^R  
(w')^2 w^{\alpha-2} r^4 \phi_0^2 dr \\
&\quad+ \underbrace{\frac12\int w'' w^{\alpha-1} r^4 (\phi_0)^2 dr + 2\int_0^R w'w^{\alpha-1} r^3 \phi_0^2 dr}_{(\ast)_1}
\end{split}
\]
From \eqref{L-E}, we see that $$|(\ast)_1|\precsim  \int_0^R w^{2\alpha-1}r^4\phi_0^2 dr+ \int_0^R w^{\alpha-1}r^4\phi_0^2 dr<\infty$$ and hence, together with \eqref{ast2} from \eqref{astast} we deduce that 
\[
\int_0^R w^{\alpha} r^4 (\phi_0')^2 dr  +\frac{\alpha-1}{2}\int_0^R 
(w')^2 w^{\alpha-2} r^4 \phi_0^2 dr <\infty. 
\]
This establishes the case of $\beta=1$. The general case can be done by a similar bootstrapping argument: assume that $\int_0^Rw^{\alpha-\beta}r^4\phi_0^2 dr<\infty$ for $0< \beta\leq \alpha$. By multiplying \eqref{phi0} by 
$w^{-\beta} \phi_0$ and integrating to derive that $$\int_0^R w^{\alpha+1-\beta} r^4(\phi_0')^2dr +\frac{\alpha-\beta}{2}\int_0^R 
(w')^2 w^{\alpha-\beta-1} r^4 \phi_0^2 dr$$ is bounded.  We remark that the condition on $\beta$ is necessary to keep the second integral stay non-negative; otherwise, we won't be to close the argument. Now by the Hardy inequality again in Lemma \ref{hardy-G}, we conclude that $\int_0^R w^{z-2}{r^4\phi_0^2}dr$ is bounded as long as $z>1$. 

We move onto the item 2 regarding the higher regularity of $\phi_0$. The key to achieve this higher regularity for $\phi_0$ is to capture the right behavior of $L$ with respect to the spatial derivatives. We will follow the idea in \cite{JM10}. We first write \eqref{phi0} as follows: 
\begin{equation}\label{3.15}
-\left[w\phi_0'' +(1+\alpha) w'\phi_0'+\frac{4w}{r}\phi_0'\right] = - \left[  \alpha(4-3\frac{\alpha}{1+\alpha}) \frac{w'}{r}\phi_0 +\frac{\alpha\mu_0}{1+\alpha} \phi_0 \right]
\end{equation}
Take the derivative of \eqref{3.15} and write it as 
\begin{equation}\label{3.16}
\begin{split}
-\left[w\phi_0''' +(2+\alpha) w'\phi_0''+\frac{4w}{r}\phi_0''\right] &=(1+\alpha) w''\phi_0'+(\frac{4w}{r})'\phi_0' \\
\quad&- \left[  \alpha(4-3\frac{\alpha}{1+\alpha}) \frac{w'}{r}\phi_0 +\frac{\alpha\mu_0}{1+\alpha} \phi_0 \right]'
\end{split}
\end{equation}
so that the left-hand-side of \eqref{3.16} keeps the self-adjoint structure with different weights: 
\[
-\left[w\phi_0''' +(2+\alpha) w'\phi_0''+\frac{4w}{r}\phi_0''\right]= -\frac{1}{w^{1+\alpha} r^4}\left[ w^{2+\alpha}r^4 \phi_0''\right]'
\]
Hence, by multiplying \eqref{3.16} by $w^{1+\alpha} r^4\phi_0'$ and integrating, we get 
\[
\begin{split}
\int_0^R w^{2+\alpha} r^4(\phi_0'')^2dr & = \int_0^R [(1+\alpha)w'' +(\frac{4w}{r})'] w^{1+\alpha} r^4  ( \phi_0')^2 dr  \\
&- \int_0^R   \left[  \alpha(4-3\frac{\alpha}{1+\alpha}) \frac{w'}{r}\phi_0 +\frac{\alpha\mu_0}{1+\alpha} \phi_0 \right]' w^{1+\alpha} r^4   \phi_0'dr
\end{split}
\]
It is easy to see that the right-hand-side is bounded because of the property of $w$ and since $\phi_0\in H$. The higher-order estimates can be inductively done in the same manner. By taking more derivatives of \eqref{3.16}, one can derive that 
\begin{equation}\label{3.17}
\begin{split}
-\left[w\partial_r^{k+2}\phi_0 +(k+1+\alpha) w'\partial_r^{k+1}\phi_0+\frac{4w}{r}\partial_r^{k+1}\phi_0\right] = \mathcal{T}
\end{split}
\end{equation}
where $\mathcal{T}$ consists of lower-order terms such as $(\partial_r^{k+1}w)\phi_0',(\partial_r^{k}w)\phi_0'',...,w''\partial_r^{k}\phi_0$ and so on. The left-hand-side of \eqref{3.17} is written as 
\[
-\left[w\partial_r^{k+2}\phi_0 +(k+1+\alpha) w'\partial_r^{k+1}\phi_0+\frac{4w}{r}\partial_r^{k+1}\phi_0\right] = - 
\frac{1}{w^{k+\alpha} r^4}\left[ w^{k+1+\alpha}r^4 \partial_r^{k+1}\phi_0\right]'
\]
and thus by multiplying by $w^{k+\alpha} r^4\partial_r^{k}\phi_0$ and integrating, we get 
\[
\int_0^R w^{1+\alpha+k} r^4|\partial_r^{k+1}\phi_0|^2 dr =\int_0^R \mathcal{T}\cdot w^{k+\alpha} r^4\partial_r^{k}\phi_0 dr 
\]
What remains to show is that the right-hand-side is bounded. Suppose that $\mathcal{T}$ has the form of 
$(\partial_r^{j+1}w) (\partial_r^{k+1-j}\phi_0)$ for $1\leq j\leq k $; other terms are even lower than these terms and they can be treated in the same way. Then 
\[
\int_0^R \mathcal{T}\cdot w^{k+\alpha} r^4\partial_r^{k}\phi_0 dr \Rightarrow \int_0^R(w^{\frac{j-1}{2}}\partial_r^{j+1}w) (w^{\frac{k+1-j+\alpha}{2}}r^2\partial_r^{k+1-j}\phi_0) (w^{\frac{k+\alpha}{2}} r^2\partial_r^{k}\phi_0) dr 
\]
and thus by using Lemma \ref{lem2.1} and by using Cauchy-Swartz inequality, we deduce that it is bounded. Hence, we conclude that $\int_0^R w^{1+\alpha+k} r^4|\partial_r^{k+1}\phi_0|^2 dr<\infty$ as long as $1\leq k\leq 2\alpha+1$. This completes the proof the lemma.  
\end{proof}

\begin{remark}\label{rem3.7}
Lemma \ref{lem3.3} and \ref{lem3.4} show that the growing mode $\phi_0$ does not display  an erratic nor singular behavior near the origin and the boundary. Indeed, this growing mode profile belongs to the function spaces of our interest, namely has a finite total initial energy for $3<\alpha<5$; see \eqref{Ejk} and \eqref{total}. And hence, we can use it as an initial perturbation for the nonlinear problem. 
\end{remark}

\subsection{The linear growth rate}
 
We will now show that $\sqrt{\mu_0}$ is the dominating growth rate for the linearized Euler-Poisson equation.     
We begin by writing the linearized equation \eqref{zetaL}  for $\Psi\equiv\zeta$
\begin{equation}\label{L1}
w^\alpha r^4\Psi_{tt} = L\Psi  
\end{equation}
where $L$ is the self-adjoint linear operator given in \eqref{L}. Here, instead of $\zeta$, we use $\Psi$ to distinguish the linear analysis from the forthcoming nonlinear analysis and we note that $\Psi=\Psi(t,r)$ while the $\phi$  in \eqref{lambda2} and \eqref{L} depends only on $r$. 

We now introduce the following weighted norms: 
\begin{equation}
\begin{split}\label{XY}
\| f\|^2_{X^\alpha}&\equiv \int_0^R w^\alpha r^4 f^2 dr, \\
\| f\|^2_{Y^\alpha}&\equiv \frac{1+\alpha}{\alpha}\int_0^R w^{\alpha+1} r^4 f_r^2 dr. 
\end{split}
\end{equation}

The first result is on the estimate of $\Psi$ satisfying \eqref{L1} in $\|\cdot\|_{X^\alpha}$. 

\begin{lemma}\label{lem4.1}
For every solution $\Psi$ to \eqref{L1} there exist
$C_{\mu_0}, C_{\mu_0,i}>0$ such that
\begin{equation}
\|\Psi(t)\|_{X^\alpha}\text{, } \|\Psi_t(t)\|_{X^\alpha} \leq C_{\mu_0}
e^{\sqrt{\mu_0}
t}(\|\Psi_t(0)\|_{X^\alpha}+\|\Psi(0)\|_{X^\alpha}+\|\Psi(0)\|_{Y^\alpha})\label{(1)}
\end{equation}
and for any $i\geq 1$,
\begin{equation}
\begin{split}
\|\partial_t^{i+1}\Psi(t)\|_{X^\alpha} \leq C_{\mu_0,i}&
e^{\sqrt{\mu_0}
t}(\|\Psi_t(0)\|_{X^\alpha}+\|\Psi(0)\|_{X^\alpha}+\|\Psi(0)\|_{Y^\alpha})\\
+
C&_{\mu_0,i}\sum_{j=1}^{i}(\|\partial_t^{j+1}\Psi(0)\|_{X^\alpha}+
\|\partial_t^j\Psi(0)\|_{Y^\alpha}).\label{(2)}
\end{split}
\end{equation}
\end{lemma}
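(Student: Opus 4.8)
The plan is to combine the variational characterization of $\mu_0$ with conservation of the natural energy of \eqref{L1}, and then to transfer the resulting bound to time derivatives using the fact that $L$ and the weight $w^\alpha r^4$ do not depend on $t$. First I would record the energy identity: multiplying \eqref{L1} by $\Psi_t$, integrating over $(0,R)$ and integrating by parts (the boundary terms vanish because the weight $w^{1+\alpha}r^4$ degenerates both at $r=0$, through $r^4$, and at $r=R$, through $w^{1+\alpha}$, while $\Psi,\Psi'$ stay bounded there by the assumed regularity), one gets $(L\Psi,\Psi_t)=\tfrac12\tfrac{d}{dt}Q(\Psi)$, hence $\tfrac{d}{dt}\big(\|\Psi_t(t)\|_{X^\alpha}^2-Q(\Psi(t))\big)=0$. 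Call this conserved quantity $\mathcal{E}_0$. Since $[w^{1+\alpha}]'<0$ and $4-3\tfrac{1+\alpha}{\alpha}>0$ for $\alpha>3$, the gravity/pressure term in $-Q$ has a favorable sign, so $-Q(\Psi(0))\le\|\Psi(0)\|_{Y^\alpha}^2$ and therefore $\mathcal{E}_0\le\|\Psi_t(0)\|_{X^\alpha}^2+\|\Psi(0)\|_{Y^\alpha}^2$; in particular $\mathcal{E}_0^{+}$ is controlled by the data.

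Next, set $h(t)=\|\Psi(t)\|_{X^\alpha}^2\ge0$. Then $\dot h=2(\Psi,\Psi_t)_{X^\alpha}$, so by Cauchy--Schwarz $|\dot h|\le 2\sqrt{h}\,\|\Psi_t\|_{X^\alpha}$. The variational definition of $\mu_0$ gives $Q(\Psi(t))\le\mu_0\|\Psi(t)\|_{X^\alpha}^2=\mu_0 h(t)$ for the solutions under consideration, so by the conservation law $\|\Psi_t\|_{X^\alpha}^2=\mathcal{E}_0+Q(\Psi)\le\mathcal{E}_0+\mu_0 h$, and the right side is $\ge\|\Psi_t\|_{X^\alpha}^2\ge0$. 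Hence $\dot h\le 2\sqrt{h}\,\sqrt{\mathcal{E}_0+\mu_0 h}$. Introducing $p(t)=\sqrt{\mathcal{E}_0+\mu_0 h(t)}$ and treating the cases $\mathcal{E}_0\ge0$ and $\mathcal{E}_0<0$ separately (in the latter $\mu_0 h\ge|\mathcal{E}_0|$ by the previous step, which bounds $p(0)$ and $\sqrt{|\mathcal{E}_0|}$ by $\sqrt{\mu_0 h(0)}$), a one-line Gronwall argument gives $p(t)\le C_{\mu_0}\big(p(0)+\sqrt{\mathcal{E}_0^{+}}\big)e^{\sqrt{\mu_0}t}$ and thus $\|\Psi(t)\|_{X^\alpha}=\sqrt{h(t)}\le C_{\mu_0}e^{\sqrt{\mu_0}t}\big(\|\Psi(0)\|_{X^\alpha}+\|\Psi_t(0)\|_{X^\alpha}+\|\Psi(0)\|_{Y^\alpha}\big)$. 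The bound on $\|\Psi_t(t)\|_{X^\alpha}$ then follows from $\|\Psi_t(t)\|_{X^\alpha}^2=\mathcal{E}_0+Q(\Psi(t))\le\mathcal{E}_0^{+}+\mu_0\|\Psi(t)\|_{X^\alpha}^2$ together with $e^{2\sqrt{\mu_0}t}\ge1$. This proves \eqref{(1)}.

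For \eqref{(2)}, since $L$ and $w^\alpha r^4$ are $t$-independent, every $\partial_t^k\Psi$ solves \eqref{L1}, so for each $k$ the quantity $\mathcal{E}_k\equiv\|\partial_t^{k+1}\Psi(t)\|_{X^\alpha}^2-Q(\partial_t^k\Psi(t))$ is conserved and, exactly as above, $\mathcal{E}_k^{+}\le\|\partial_t^{k+1}\Psi(0)\|_{X^\alpha}^2+\|\partial_t^k\Psi(0)\|_{Y^\alpha}^2$. I would then argue by induction on $i$: assuming the bound on $\|\partial_t^{i-1}\Psi(t)\|_{X^\alpha}$ in the form of \eqref{(2)}, use $\|\partial_t^i\Psi(t)\|_{X^\alpha}^2=\mathcal{E}_{i-1}+Q(\partial_t^{i-1}\Psi(t))\le\mathcal{E}_{i-1}^{+}+\mu_0\|\partial_t^{i-1}\Psi(t)\|_{X^\alpha}^2$ to estimate $\|\partial_t^i\Psi(t)\|_{X^\alpha}$, and repeat once more with $\mathcal{E}_i$ to estimate $\|\partial_t^{i+1}\Psi(t)\|_{X^\alpha}$. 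Because each $\mathcal{E}_k$ is a time-\emph{independent} constant bounded by $\|\partial_t^{k+1}\Psi(0)\|_{X^\alpha}$ and $\|\partial_t^k\Psi(0)\|_{Y^\alpha}$, those contributions enter \eqref{(2)} without the exponential factor, precisely as stated, while the genuinely growing part is inherited from \eqref{(1)}.

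The one delicate point is obtaining the \emph{sharp} rate $\sqrt{\mu_0}$ rather than $2\sqrt{\mu_0}$: the naive estimate $\tfrac{d^2}{dt^2}\|\Psi\|_{X^\alpha}^2\le 2\|\Psi_t\|_{X^\alpha}^2+2\mu_0\|\Psi\|_{X^\alpha}^2$ combined with energy conservation loses a factor of two in the exponent, which would make the escape time \eqref{T} off by a constant factor and is therefore not acceptable here. The fix is the first-order inequality $\dot h\le 2\sqrt{h}\,\sqrt{\mathcal{E}_0+\mu_0 h}$, where the sign analysis of the conserved energy $\mathcal{E}_0$ is exactly what forces the exponent to equal the linear growth rate. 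Everything else—self-adjointness of $L$, vanishing of boundary terms, the sign of the gravitational term—is elementary given Lemma \ref{lem2.1} and the regularity assumed for solutions of \eqref{L1}.
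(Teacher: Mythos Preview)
Your proof is correct and rests on exactly the same ingredients as the paper's: the conserved energy $\|\Psi_t\|_{X^\alpha}^2-Q(\Psi)$, the variational bound $Q(\Psi)\le\mu_0\|\Psi\|_{X^\alpha}^2$, the sign $-Q(\Psi(0))\le\|\Psi(0)\|_{Y^\alpha}^2$, and then Gronwall. The only difference is in how the Gronwall step for \eqref{(1)} is organized. You work with $h=\|\Psi\|_{X^\alpha}^2$ and the auxiliary $p=\sqrt{\mathcal{E}_0+\mu_0 h}$, which forces a case split on the sign of $\mathcal{E}_0$ (and a small care point where $p$ may vanish). The paper instead substitutes the elementary bound $\|\Psi(t)\|_{X^\alpha}\le\|\Psi(0)\|_{X^\alpha}+\int_0^t\|\Psi_t(\tau)\|_{X^\alpha}\,d\tau$ into $\|\Psi_t\|_{X^\alpha}^2\le\mu_0\|\Psi\|_{X^\alpha}^2+\|\Psi_t(0)\|_{X^\alpha}^2+\|\Psi(0)\|_{Y^\alpha}^2$, takes a square root, and applies Gronwall directly to $\|\Psi_t\|_{X^\alpha}$; this avoids the sign dichotomy entirely and is a couple of lines shorter. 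Your route has the mild advantage of making explicit why the exponent is exactly $\sqrt{\mu_0}$ (your closing remark), while the paper's route is cleaner bookkeeping. For \eqref{(2)} the two arguments coincide: $\partial_t^k\Psi$ solves \eqref{L1}, each conserved $\mathcal{E}_k$ is bounded by $\|\partial_t^{k+1}\Psi(0)\|_{X^\alpha}^2+\|\partial_t^k\Psi(0)\|_{Y^\alpha}^2$, and one iterates $\|\partial_t^{k+1}\Psi\|_{X^\alpha}^2\le\mathcal{E}_k^++\mu_0\|\partial_t^k\Psi\|_{X^\alpha}^2$.
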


\begin{proof}

Take the inner product of \eqref{L1} with $\Psi_t$. Then we
get
$$(w^\alpha r^4 \Psi_{tt}, \Psi_t)=(L\Psi, \Psi_t) \;\Longleftrightarrow\;\frac{d}{dt}(w^\alpha r^4 \Psi_{t}, \Psi_t)=\frac{d}{dt}(L \Psi,
\Psi).$$ The above equivalence comes from the self-adjointness of
$L$. Next integrate the above in time $t$ to get
\begin{equation}
(w^\alpha r^4
\Psi_t(t),\Psi_t(t))=(L\Psi(t),\Psi(t))+(w^\alpha r^4\Psi_t(0),\Psi_t(0))
-(L\Psi(0), \Psi(0)).\label{4.4}
\end{equation}
Since  $(L \Psi(t),\Psi(t)) \leq \mu_0 (w^\alpha r^4 \Psi(t), \Psi(t))$ for
all $t$ because of the definition of $\mu_0$ and since $-(L\Psi(0), \Psi(0))\leq \|\Psi(0)\|^2_{Y^\alpha}$, from
(\ref{4.4}) we see that 
\begin{equation}
\|\Psi_t(t)\|_{X^\alpha}^2 \leq \mu_0 \|\Psi(t)\|_{X^\alpha}^2 +
\|\Psi_t(0)\|_{X^\alpha}^2 + \|\Psi (0)\|^2_{Y^\alpha}\label{4.5}. 
\end{equation}
Now since $$\|\Psi(t)\|_{X^\alpha} \leq \int_0^t \|\Psi_t(\tau)\|_{X^\alpha} d\tau +
\|\Psi(0)\|_{X^\alpha}$$
plugging this into (\ref{4.5}), we get
\begin{equation*}
\|\Psi_t(t)\|_{X^\alpha} \leq \sqrt{\mu_0} \int_0^t
\|\Psi_t(\tau)\|_{X^\alpha}d\tau +
C(\|\Psi_t(0)\|_{X^\alpha}+\|\Psi(0)\|_{X^\alpha}+\| \Psi (0)\|_{Y^\alpha} ).
\end{equation*}
By the Gronwall inequality, we obtain
\begin{equation*}
\begin{split}
\|\Psi_t(t)\|_{X^\alpha} &\leq C e^{\sqrt{\mu_0}
t}(\|\Psi_t(0)\|_{X^\alpha}+\|\Psi(0)\|_{X^\alpha}+\| \Psi (0)\|_{Y^\alpha})
\text{ and in success }\\
\
\|\Psi(t)\|_{X^\alpha}\; &\leq C e^{\sqrt{\mu_0}  t}(
\|\Psi_t(0)\|_{X^\alpha}+\|\Psi(0)\|_{X^\alpha}+\| \Psi (0)\|_{Y^\alpha}).
\end{split}
\end{equation*}
Note that $C$ only depends on $\mu_0$. For higher derivatives,
take $\partial_t^{\alpha}$ of (\ref{L1}):
$w^\alpha r^4\partial_t^{i}\Psi_{tt} = L\partial_t^{i} \Psi$.
Take the inner product of this with $\partial_t
\partial_t^{i} \Psi$ to get
\begin{equation*}
\begin{split}
\|\partial&_t^{i+1}\Psi(t)\|_{X^\alpha}^2 \\&=(w^\alpha r^4
\partial_t^{i+1}\Psi(t),\partial_t^{i+1}\Psi(t))\\
&=(L\partial_t^{i}\Psi(t),\partial_t^{i}\Psi(t))+
(w^\alpha r^4\partial_t^{i+1}\Psi_t(0),\partial_t^{i+1}\Psi_t(0))
-(L\partial_t^{i}\Psi(0), \partial_t^{i}\Psi(0))\\
&\leq \mu_0\|\partial_t^{i}\Psi(t)\|_{X^\alpha}^2 +
\|\partial_t^{i+1}\Psi(0)\|_{X^\alpha}^2 +
\|\partial_t^i\Psi(0)\|^2_{Y^\alpha}.
\end{split}
\end{equation*}
Hence (\ref{(2)})  follows.
\end{proof}

The estimates of $\Psi$ in $Y^\alpha$ space can be derived by using the self-adjointness of $L$ again and by Lemma \ref{lem4.1}. 

\begin{lemma}\label{lem3.7} For every solution $\Psi$ to \eqref{L1} there exists $C_{\mu_0}$ and $C_{\mu_0,i}>0$ such that 
\[
\|\Psi (t)\|^2_{Y^\alpha}
\leq C_{\mu_0}e^{2\sqrt{\mu_0} t}(\|\Psi_t(0)\|^2_{X^\alpha}+\|\Psi(0)\|^2_{X^\alpha}+\|\Psi(0)\|^2_{Y^\alpha})
\]
and for $i\geq 1$, 
\[
\begin{split}
\|\partial_t^{i}\Psi(t)\|^2_{Y^\alpha} \leq C_{\mu_0,i}&
e^{2\sqrt{\mu_0}
t}(\|\Psi_t(0)\|^2_{X^\alpha}+\|\Psi(0)\|^2_{X^\alpha}+\|\Psi(0)\|^2_{Y^\alpha})\\
+
C&_{\mu_0,i}\sum_{j=1}^{i}(\|\partial_t^{j+1}\Psi(0)\|^2_{X^\alpha}+
\|\partial_t^j\Psi(0)\|^2_{Y^\alpha}).
\end{split}
\]
\end{lemma}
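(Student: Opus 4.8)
The plan is to express the $Y^\alpha$-norm through the quadratic form $-(L\,\cdot\,,\,\cdot\,)$ plus a genuinely lower-order term, to bound $-(L\Psi(t),\Psi(t))$ uniformly in time by the initial data via the conservation identity \eqref{4.4}, and to import Lemma \ref{lem4.1} only for the $X^\alpha$-piece.

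\medskip

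\noindent\textbf{Step 1 (algebraic identity).} From the definition of $Q$ and since $-[w^{1+\alpha}]'=-(1+\alpha)w^\alpha w'>0$ and $4-3\frac{1+\alpha}{\alpha}>0$ for $\alpha>3$, one has, for smooth $\Psi$,
\[
\|\Psi\|_{Y^\alpha}^2=-(L\Psi,\Psi)+\Bigl(4-3\tfrac{1+\alpha}{\alpha}\Bigr)\int_0^R\bigl(-[w^{1+\alpha}]'\bigr)r^3\Psi^2\,dr .
\]
By \eqref{EQ} the factor $-\frac{1}{rw^\alpha}[w^{1+\alpha}]'=\frac{1}{r^3}\int_0^r\frac{4\pi}{K^\alpha}w^\alpha s^2\,ds$ is bounded on $(0,R)$ — this is exactly the fact used in the proof of Proposition \ref{prop3.1} — so the last integral is $\precsim\|\Psi\|_{X^\alpha}^2$. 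Hence $\|\Psi\|_{Y^\alpha}^2\precsim-(L\Psi,\Psi)+\|\Psi\|_{X^\alpha}^2$. By density of $C^\infty([0,R])$ in $H$ and finiteness of the weighted integrals this extends to all $\Psi$ of interest, so one may safely argue with smooth $\Psi$ throughout and pass to the limit at the end.

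\medskip

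\noindent\textbf{Step 2 (the case $i=0$).} From the identity \eqref{4.4}, together with $-(L\Psi(0),\Psi(0))\le\|\Psi(0)\|_{Y^\alpha}^2$ (the same inequality $-(L\cdot,\cdot)\le\|\cdot\|_{Y^\alpha}^2$ applied at $t=0$) and after dropping the nonpositive term $-\|\Psi_t(t)\|_{X^\alpha}^2$, we get the time-uniform bound
\[
-(L\Psi(t),\Psi(t))\le\|\Psi_t(0)\|_{X^\alpha}^2+\|\Psi(0)\|_{Y^\alpha}^2 .
\]
Combining this with Step 1 and the $X^\alpha$-estimate \eqref{(1)} of Lemma \ref{lem4.1},
\[
\|\Psi(t)\|_{Y^\alpha}^2\precsim\|\Psi_t(0)\|_{X^\alpha}^2+\|\Psi(0)\|_{Y^\alpha}^2+\|\Psi(t)\|_{X^\alpha}^2\precsim e^{2\sqrt{\mu_0}t}\bigl(\|\Psi_t(0)\|_{X^\alpha}^2+\|\Psi(0)\|_{X^\alpha}^2+\|\Psi(0)\|_{Y^\alpha}^2\bigr),
\]
which is the first assertion.

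\medskip

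\noindent\textbf{Step 3 (higher derivatives).} Applying $\partial_t^i$ to \eqref{L1} shows that $\partial_t^i\Psi$ solves $w^\alpha r^4\partial_t^{i+2}\Psi=L\partial_t^i\Psi$, so the identity \eqref{4.4} and the argument of Step 2 apply verbatim with $\Psi$ replaced by $\partial_t^i\Psi$, giving $-(L\partial_t^i\Psi(t),\partial_t^i\Psi(t))\le\|\partial_t^{i+1}\Psi(0)\|_{X^\alpha}^2+\|\partial_t^i\Psi(0)\|_{Y^\alpha}^2$ and hence, via Step 1,
\[
\|\partial_t^i\Psi(t)\|_{Y^\alpha}^2\precsim\|\partial_t^{i+1}\Psi(0)\|_{X^\alpha}^2+\|\partial_t^i\Psi(0)\|_{Y^\alpha}^2+\|\partial_t^i\Psi(t)\|_{X^\alpha}^2 .
\]
Now insert the bound for $\|\partial_t^i\Psi(t)\|_{X^\alpha}^2$ from \eqref{(1)} when $i=1$ and from \eqref{(2)} (with index $i-1$) when $i\ge2$; the two summation ranges combine to give $\sum_{j=1}^{i}\bigl(\|\partial_t^{j+1}\Psi(0)\|_{X^\alpha}^2+\|\partial_t^j\Psi(0)\|_{Y^\alpha}^2\bigr)$, which is precisely the claimed estimate.

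\medskip

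There is no real obstacle here; the two points needing care are the sign bookkeeping in Step 1 (which uses $\alpha>3$, i.e.\ $\gamma<4/3$, in an essential way) and the recognition that the non-$Y^\alpha$ part of $-(L\,\cdot\,,\,\cdot\,)$ is lower order and absorbed into $\|\cdot\|_{X^\alpha}$ through \eqref{EQ}. It is perhaps worth emphasizing that, in contrast to the $X^\alpha$-estimates, the quadratic-form contribution to the $Y^\alpha$-norm is controlled \emph{uniformly in time} by the data, and the exponential factor enters only through $\|\partial_t^i\Psi(t)\|_{X^\alpha}$.
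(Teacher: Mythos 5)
Your proof is correct and follows essentially the same route as the paper: both rewrite $\|\cdot\|_{Y^\alpha}^2$ as $-(L\,\cdot\,,\,\cdot\,)$ plus a term controlled by $\|\cdot\|_{X^\alpha}^2$ via \eqref{EQ}, bound $-(L\Psi(t),\Psi(t))$ through the conservation identity \eqref{4.4} together with $-(L\Psi(0),\Psi(0))\le\|\Psi(0)\|_{Y^\alpha}^2$, and then invoke Lemma \ref{lem4.1}. Your Step 3 simply spells out the bookkeeping for $i\ge 1$ that the paper dispatches with "the same way."
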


\begin{proof} Here we give a proof for $i=0$. Other cases of $i\geq 1$ can be treated in the same way. By using \eqref{L},  we rewrite the equation \eqref{4.4} as 
\[
\|\Psi(t)\|_{Y^\alpha}^2=-\|\Psi_t(t)\|_{X^\alpha}^2-(4-3\frac{1+\alpha}{\alpha})\int_0^R[w^{1+\alpha}]' r^3\Psi^2 dr +\|\Psi_t(0)\|_{X^\alpha}^2- (L\Psi(0), \Psi(0)). 
\]
Notice that since $-\frac{[w^{1+\alpha}]'}{r}\precsim w^\alpha$ from \eqref{EQ}, the second term in the right-hand-side is bounded by $ C\|\Psi(t)\|^2_{X^\alpha}$.  Moreover, by recalling that $-(L\Psi(0), \Psi(0))\leq \|\Psi(0)\|^2_{Y^\alpha}$,  we  deduce that 
\[
\|\Psi(t)\|_{Y^\alpha}^2\leq C \|\Psi(t)\|^2_{X^\alpha}+ \|\Psi_t(0)\|_{X^\alpha}^2 +\|\Psi(0)\|_{Y^\alpha}^2. 
\]
And therefore, from Lemma \ref{lem4.1}, the conclusion follows.
\end{proof}

In the next two sections, we develop the nonlinear theory for solutions to the Euler-Poisson equation  in the spherically symmetric motion \eqref{zeta}.


\section{The instant energy and total energy}\label{4}

 In this section, we will introduce the various energies  and establish the equivalence of the temporal instant energy and the total energy for the solutions to  the Euler-Poisson equation. 

 We first rewrite the equation \eqref{zeta} as 
\begin{equation}\label{zetaE}
\begin{split}
\frac{w^\alpha r^4\zeta_{tt}}{(1+\zeta)^2} &-\frac{1+\alpha}{\alpha} (w^{1+\alpha} r^4 \zeta_r)_r+(3\frac{1+\alpha}{\alpha} -4) w^\alpha r^4\Phi(r) \zeta \\
&+r^3(w^{1+\alpha} h(\zeta,\zeta_r r))_r-w^\alpha r^4\Phi(r) f(\zeta)=0
\end{split}
\end{equation}
where $\Phi(r)$ is the prescribed function defined by 
\begin{equation}\label{Phi}
\begin{split}
\Phi(r)&\equiv\frac{1}{r^3}\int_0^r\frac{4\pi}{K^\alpha}w^\alpha s^2 ds=-\frac{(w^{1+\alpha})_r}{rw^\alpha} = -(1+\alpha) \frac{w_r}{r}
\end{split}
\end{equation}
and $f(\zeta)$ is given by 
\[
f(\zeta)\equiv \frac{(1+\zeta)^4-1-4\zeta(1+\zeta)^4}{(1+\zeta)^4}
\]
and thus $f(\zeta)=O(|\zeta|^2)$ for small $\zeta$, and finally $h$ is given in  \eqref{exp}.

We introduce the following instant energies and the total energy for the solutions to the nonlinear Euler-Poisson  equation \eqref{zetaE}. We denote the zeroth-order kinetic energy plus the lower-order potential energy for $\zeta$ by $\mathcal{E}^0$: 
\begin{equation}\label{E0}
\mathcal{E}^0\equiv\int_0^R w^\alpha r^4 \left| \zeta_t\right|^2 + \frac{1+\alpha}{\alpha}  w^{1+\alpha} r^4\left | \zeta_r\right|^2 dr +\int_0^R w^\alpha r^4\left| \zeta\right|^2 dr 
\end{equation}
The higher-order (temporal) instant energy is denoted by $\mathcal{E}^j$ for $j\geq 1$: 
\begin{equation*}
\mathcal{E}^j\equiv \int_0^Rw^\alpha r^4 \big|\partial_t^{j} \zeta_t\big|^2 + \frac{1+\alpha}{\alpha}  w^{1+\alpha} r^4 \big|\partial_t^j\zeta_r\big|^2 dr 
\end{equation*}
and the total instant energy by $\mathcal{E}(t)$: 
\begin{equation}\label{TE}
\mathcal{E}(t) \equiv \mathcal{E}^0(t)+\sum_{j=1}^{\lceil\alpha\rceil+3} \mathcal{E}^j(t)
\end{equation}
where  $\lceil \alpha \rceil$ is a ceiling function, namely
$\lceil \alpha \rceil = \min \{n\in \mathbb{Z}: \alpha\leq n\}.$  
Notice that the instant energies can be expressed in terms of $X^\alpha$ and $Y^\alpha$ that were introduced earlier in \eqref{XY} as follows: 
\[
\begin{split}
\mathcal{E}^0& =\| \zeta_t \|^2_{X^\alpha} +\| \zeta \|^2_{Y^\alpha} +\| \zeta \|^2_{X^\alpha}\\
\mathcal{E}^j&= \| \partial_t^j\zeta_t \|^2_{X^\alpha} +\| \partial_t^j\zeta \|^2_{Y^\alpha}
\end{split}
\]
For the mixed derivatives, we introduce the following notation: 
\begin{equation}\label{Ejk}
\mathcal{E}^{j,k}\equiv \int_0^Rw^{\alpha+k} r^4\big |\partial_t^{j-k} \partial_r^k \zeta_t\big|^2 + \frac{1+\alpha}{\alpha}  w^{1+\alpha+k} r^4 \big|\partial_t^{j-k}\partial_r^k\zeta_r\big|^2 dr. 
\end{equation} 
We remark that this change of weights for the spatial derivatives in the energy norms is not a coincidence but rather  a nature of the physical vacuum state of polytropic gases; this nature was discovered and analyzed in \cite{JM, JM10} for general $\alpha$.  The total energy including both temporal and spatial derivatives is defined by 
\begin{equation}\label{total}
\widetilde{\mathcal{E}}(t) \equiv \mathcal{E}^0(t) +\sum_{j=1}^{\lceil\alpha\rceil+3} \sum_{k=0}^j\mathcal{E}^{j,k}. 
\end{equation}
It is clear that 
\[
\mathcal{E}(t)\leq \widetilde{\mathcal{E}}(t). 
\]
We want to show that the converse holds for the solutions to \eqref{zetaE} in such a way: $\widetilde{\mathcal{E}}$ is bounded by $\mathcal{E}$ under the following  smallness assumption: 
\begin{equation}\label{small}
\big|\zeta\big|+\big|\zeta_r\big|+ \sum_{q=1}^{[\frac{\lceil\alpha \rceil+3}{2}]+1} \big|w^{\frac{q-1}{2}} \partial_t^q\zeta \big|
+ \sum_{q=1}^{[\frac{\lceil\alpha \rceil+3}{2}]}\big|w^{\frac{q-1}{2}}\partial_t^q\zeta_{r}\big|    \leq \theta_1
\end{equation}
where $\theta_1$ is a sufficiently small, fixed constant.  The validity of this assumption within the total energy $\widetilde{\mathcal{E}}$ will be justified in Lemma \ref{lem4.7}.

Our main goal in this section is to prove the following proposition, which concerns the equivalence of $\mathcal{E}(t)$ and $\widetilde{\mathcal{E}}(t)$ under the smallness assumption \eqref{small}. 

\begin{proposition}\label{equiv} Let $(\zeta_r,\zeta)$ be a solution to \eqref{zetaE} and moreover assume that \eqref{small} holds for  $0\leq t\leq T$. Then there exists a constant $C>0$ such that for $0\leq t\leq T$
\begin{equation*}
\widetilde{\mathcal{E}}(t)\leq C \mathcal{E}(t). 
\end{equation*}
\end{proposition}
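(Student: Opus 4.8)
The plan is to prove the reverse inequality $\widetilde{\mathcal{E}}(t) \leq C\mathcal{E}(t)$ by recovering each mixed space-time derivative norm $\mathcal{E}^{j,k}$ from the purely temporal norms $\mathcal{E}^j$ via an induction on the number $k$ of spatial derivatives, using the equation \eqref{zetaE} as an elliptic relation at each fixed time $t$. The base case $k=0$ is trivial since $\mathcal{E}^{j,0}=\mathcal{E}^j$. For the inductive step, I would view \eqref{zetaE} after applying $\partial_t^{j-k}\partial_r^{k-1}$ as an ODE in $r$ whose leading part is the degenerate elliptic operator $-\tfrac{1+\alpha}{\alpha}(w^{1+\alpha}r^4 \partial_r(\cdot))_r$; solving for the top spatial derivative $\partial_t^{j-k}\partial_r^{k}\zeta_r$ in terms of $\partial_t^{j-k+1}\partial_r^{k-1}\zeta_t$ (which is controlled by $\mathcal{E}^{j,k-1}$ by induction, after a further integration/Hardy step), lower-order spatial derivatives, the potential term $w^\alpha r^4\Phi(r)\zeta$, and the nonlinear remainders coming from $h(\zeta,\zeta_r r)$ and $f(\zeta)$.

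The heart of the argument is the weighted elliptic estimate. After applying $\partial_t^{j-k}\partial_r^{k-1}$ to \eqref{zetaE} and isolating the term $(w^{1+\alpha}r^4\partial_t^{j-k}\partial_r^k\zeta_r)_r$ in the manner of \eqref{3.16}–\eqref{3.17} in the proof of Lemma \ref{lem3.4}, one multiplies by an appropriate weight times $\partial_t^{j-k}\partial_r^k\zeta$ and integrates by parts over $(0,R)$; the degenerate weights $w^{1+\alpha+k}r^4$ are exactly the ones appearing in the definition \eqref{Ejk} of $\mathcal{E}^{j,k}$, and this is no accident — it is the structure identified in \cite{JM,JM10}. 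The boundary terms at $r=R$ vanish because of the vanishing of $w$, and the coordinate singularity at $r=0$ is handled by the localized Hardy inequalities near the origin. On the right-hand side one gets, besides the inductively controlled $\mathcal{E}^{j,k-1}$-type quantities, three kinds of error: (i) commutator terms where a derivative falls on $w$ rather than on $\zeta$, which are absorbed using the higher regularity of $w$ from Lemma \ref{lem2.1} (the bounds on $w^{(m-1)/2}\partial_r^{m+1}w$) together with Cauchy–Schwarz; (ii) the potential contribution, handled using $\Phi(r)=-(1+\alpha)w_r/r$ which is bounded, plus a Hardy inequality to trade one power of $w$ for a derivative when needed; and (iii) the nonlinear terms from $h$ and $f$, which are the genuinely new feature and are controlled precisely by invoking the smallness hypothesis \eqref{small} to bound the factors $|\zeta|,|\zeta_r|,|w^{(q-1)/2}\partial_t^q\zeta|$ pointwise by $\theta_1$, so that every nonlinear contribution can be absorbed into the left-hand side after choosing $\theta_1$ small, with the remaining factors bounded by lower-order pieces of $\widetilde{\mathcal{E}}$.

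Bookkeeping the derivative count is the second delicate point: one must check that for every $(j,k)$ with $1\leq j\leq \lceil\alpha\rceil+3$ and $0\leq k\leq j$, the weights that arise stay within the regularity range $1\leq k\leq 2\alpha+1$ and $j<3\alpha+3$ guaranteed for $w$ in Lemma \ref{lem2.1} and (for the linear profile, by analogy) in Lemma \ref{lem3.4}, so that all the $w$-factors in the error terms are indeed finite; since $\alpha>3$, one has $2\alpha+1>\lceil\alpha\rceil+3$, so the range is sufficient. One also has to verify that the time derivatives appearing on the controlled side, namely $\partial_t^{j-k+1}\partial_r^{k-1}\zeta_t$, are accounted for by $\mathcal{E}^{j,k-1}$ (they are, after re-indexing), and that each nonlinear term, when differentiated by Leibniz, distributes its derivatives so that at most one factor carries a ``high'' number of derivatives while the others are controlled pointwise by \eqref{small}; this is the standard structure for quasilinear energy estimates but must be done with the vacuum weights in place.

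I expect the main obstacle to be the interplay between the degenerate weight and the Hardy inequalities at the two singular ends: when I solve the elliptic relation for $\partial_r^k\zeta_r$ I only directly control $(w^{1+\alpha+k}r^4\partial_r^k\zeta_r)_r$ in $L^2$ with weight $w^{-(k+\alpha)}r^{-4}$, and converting this into control of $w^{(1+\alpha+k)/2}r^2\partial_r^k\zeta_r$ in $L^2$ — i.e. into $\mathcal{E}^{j,k}$ — requires exactly a Hardy-type inequality adapted to the weight $w^{1+\alpha+k}$ near $r=R$ (where $w\sim R-r$) and a separate one near $r=0$ (where $r^4$ degenerates). Making sure these two localized Hardy inequalities are applicable for the full range of exponents $1+\alpha+k$ that occur, and that the resulting constants can be combined with the smallness constant $\theta_1$ to close the estimate without circularity, is where the real work lies; the rest is systematic bootstrapping modeled on the proof of Lemma \ref{lem3.4}.
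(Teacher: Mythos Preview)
Your proposal is essentially correct and follows the same overall architecture as the paper: induction on the number $k$ of spatial derivatives, recovering $\mathcal{E}^{j,k}$ from lower $k$'s by treating the equation as a degenerate elliptic relation in $r$, with Hardy inequalities at both singular ends ($r=0$ and $r=R$) and the smallness assumption \eqref{small} to absorb the quasilinear remainder terms. Your bookkeeping of the derivative ranges and the role of Lemma~\ref{lem2.1} is also correct.

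There is one methodological difference worth noting. You propose to obtain the elliptic gain by multiplying the differentiated equation by (weight)$\times \partial_t^{j-k}\partial_r^k\zeta$ and integrating by parts, as in the proof of Lemma~\ref{lem3.4}. The paper instead \emph{squares} the differentiated equation, multiplies by the weight (e.g.\ $w^{-\alpha}r^{-4}$ for $k=1$, $w^{1+\alpha}r^2$ for $k=2$), integrates, and then expands the resulting quadratic form on the left-hand side, integrating by parts only the cross terms there to extract the positive leading piece $\int w^{1+\alpha+k}r^4|\partial_r^{k}\zeta_r|^2\,dr$. Both routes work, but the squaring approach is slightly cleaner here: every term on the right-hand side of the equation only needs to be estimated in a weighted $L^2$ norm, rather than paired against the test function, which makes the treatment of the time-derivative term $w^\alpha r^4\zeta_{tt}/(1+\zeta)^2$ and the nonlinear remainders more transparent. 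Your approach would also go through, but you should be aware that the self-adjoint form you invoke from \eqref{3.16}--\eqref{3.17} carries the weight $w^{1+\alpha+k}$ (not $w^{1+\alpha}$ as you wrote in one place) after $k$ spatial differentiations; you recognize this later, so this appears to be a slip rather than a gap.
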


In order to prove Proposition \ref{equiv}, we first recall standard Hardy inequalities, and embedding inequalities, and derive more of Hardy type inequalities which can be adapted to our energy spaces induced by $\mathcal{E}$.  The proof of Proposition \ref{equiv} will be given in Section \ref{sec4.2}.

\subsection{Hardy inequalities}

The following is the most well-known Hardy inequality applied to the Sobolev spaces: 
\begin{equation}\label{har}
\int_0^\infty  {\left| \frac{g(x)}{x}\right|}^2dx\;\precsim \; \int_0^\infty |g'(x)|^2 dx. 
\end{equation} 
We will also make use of the more general version of the  Hardy 
inequality:

\begin{lemma}[Hardy inequality] \label{hardy-G} 
Let  $k $ be a given real number. And let $g$ be a function satisfying $\int_0^1  s^k (g^2 + g'^2) ds < \infty. $
\begin{enumerate}
\item Then if $k > 1$, then we have $$ \int_0^1  s^{k-2}  g^2 ds  \leq C \int_0^1  s^k (g^2 + |g'|^2) ds.   $$ 

\item And  if $k < 1$, then $g$ has a trace at $x=0$  and moreover, 
$$ \int_0^1  s^{k-2}  (g - g(0))^2 ds  \leq C \int_0^1  s^k   |g'|^2  ds  .$$ 
\end{enumerate}
 \end{lemma}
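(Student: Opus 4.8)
\textbf{Proof plan for the Hardy inequality (Lemma~\ref{hardy-G}).}

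The plan is to reduce both statements to explicit one-dimensional integral manipulations, handling the two cases by essentially the same device: write $g^2$ (or $(g-g(0))^2$) as an integral of its derivative and integrate by parts against the weight $s^{k-1}$. First I would dispose of the issue of boundary values and density: for smooth $g$ on $(0,1]$ the computations below are justified, and the general case with $\int_0^1 s^k(g^2+g'^2)\,ds<\infty$ follows by a standard approximation argument (truncating near $s=0$ and passing to the limit), so it suffices to argue for $g\in C^\infty((0,1])$ with the stated finiteness.

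For item~(1), assume $k>1$. I would start from the identity obtained by integrating by parts,
\[
\int_\varepsilon^1 s^{k-2} g^2\,ds = \frac{1}{k-1}\Bigl[ s^{k-1} g^2\Bigr]_\varepsilon^1 - \frac{2}{k-1}\int_\varepsilon^1 s^{k-1} g g'\,ds,
\]
valid since $k-1\neq 0$. The key point is that $k>1$ makes the lower endpoint term $\varepsilon^{k-1}g(\varepsilon)^2$ harmless: using the finiteness hypothesis one checks $\liminf_{\varepsilon\to 0}\varepsilon^{k-1}g(\varepsilon)^2=0$ along a subsequence, so that term drops. The boundary term at $s=1$ is controlled by $\int_0^1 s^k g^2\,ds$ (in fact by $\int_{1/2}^1 g^2\,ds$ up to a constant, which in turn is $\lesssim \int_0^1 s^k(g^2+g'^2)\,ds$). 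For the cross term I would apply Cauchy--Schwarz,
\[
\Bigl|\frac{2}{k-1}\int_0^1 s^{k-1} g g'\,ds\Bigr| \le \frac{2}{|k-1|}\Bigl(\int_0^1 s^{k-2}g^2\,ds\Bigr)^{1/2}\Bigl(\int_0^1 s^k (g')^2\,ds\Bigr)^{1/2},
\]
and absorb a small multiple of $\int_0^1 s^{k-2}g^2\,ds$ into the left-hand side, provided that quantity is a priori finite; a further truncation/limiting argument removes the a priori finiteness assumption, yielding the claimed bound with $C$ depending only on $k$.

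For item~(2), assume $k<1$. Here the first task is to establish that $g$ has a trace at $0$: writing $g(s_2)-g(s_1)=\int_{s_1}^{s_2} g'(\tau)\,d\tau$ and estimating $\int_{s_1}^{s_2}|g'|\,d\tau \le (\int_{s_1}^{s_2}\tau^{-k}\,d\tau)^{1/2}(\int_{s_1}^{s_2}\tau^k (g')^2\,d\tau)^{1/2}$, the factor $\int_0^{s_2}\tau^{-k}\,d\tau$ converges because $k<1$, so $g$ is Cauchy as $s\to 0$ and $g(0)$ exists. Replacing $g$ by $g-g(0)$ we may assume $g(0)=0$. Then I would run the same integration-by-parts identity as above, now with no boundary term at $s=0$ (since $s^{k-1}g(s)^2\to 0$ as $s\to0$, using $g(s)=O(s^{(1-k)/2})$ from the trace estimate and $k-1+(1-k)=0$... more precisely $s^{k-1}g(s)^2\lesssim s^{k-1}\cdot s^{1-k}\cdot(\text{small})\to 0$), and with the boundary term at $s=1$ dropped entirely by restricting, if desired, to test functions vanishing there or simply noting it has the favorable sign when $k-1<0$; then Cauchy--Schwarz and absorption finish the proof exactly as before.

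\textbf{Main obstacle.} The routine calculus is not the difficulty; the delicate point in both cases is the rigorous vanishing of the endpoint term $s^{k-1}g(s)^2$ as $s\to 0$ and the legitimacy of the ``absorb into the left-hand side'' step, which a priori requires knowing $\int_0^1 s^{k-2}g^2\,ds<\infty$. I expect to spend the most care on the truncation argument (replace $g$ by $g$ cut off to vanish near $0$, prove the inequality with a uniform constant, then let the cutoff recede), which simultaneously justifies the integration by parts, kills the boundary term, and licenses the absorption.
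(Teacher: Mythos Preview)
The paper does not give its own proof of this lemma: immediately after the statement it simply writes ``For the proof of Lemma~\ref{hardy-G}, we refer to~\cite{KMP07}.'' So there is no argument in the paper to compare against.

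Your integration-by-parts approach is the standard one and is correct in outline; in particular the trace argument in item~(2) and the favorable sign of the $s=1$ boundary term when $k<1$ are right. You have also correctly identified the one genuinely delicate point: in item~(1), neither the vanishing of $\varepsilon^{k-1}g(\varepsilon)^2$ nor the finiteness of $\int_0^1 s^{k-2}g^2\,ds$ needed for absorption follows directly from the hypothesis $\int_0^1 s^k(g^2+g'^2)\,ds<\infty$. A clean way to close this is to first establish the inequality under the additional a~priori assumption $\int_0^1 s^{k-2}g^2\,ds<\infty$ (then $\liminf_{s\to 0} s^{k-1}g(s)^2=0$ along a subsequence, since otherwise $s^{k-2}g^2\gtrsim 1/s$ is not integrable, and the absorption is legitimate), and then remove that assumption by monotone convergence applied to $g\chi_n$ with a cutoff vanishing near $0$; the extra error term $\int s^k (\chi_n')^2 g^2$ can be controlled by choosing $\chi_n$ to have $|\chi_n'|\lesssim 1/(s\log n)$ on a dyadic shell, or by the standard density argument in the weighted space as in~\cite{KMP07}.
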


For the proof of Lemma \ref{hardy-G}, we refer to  \cite{KMP07}. 

As an application of the standard Hardy inequality \eqref{har}, we first obtain the following Hardy inequality  localized  near the origin. 

\begin{lemma}\label{hard} Let $u\in X^a\cap Y^a$ be given where $X^a$ and $Y^a$ are defined in \eqref{XY} and $a$ is any real number. Let $c$ be a positive fixed number such that $0<c<2c<R$. Then there exists a constant $C>0$ independent of $u$ such that 
\begin{equation}\label{hardy0}
\int_0^c r^2 |u|^2 dr \leq C\left (\int_0^{2c} r^4 |u_r|^2  dr +\int_c^{2c} r^4 | u|^2 dr \right).
\end{equation}
\end{lemma}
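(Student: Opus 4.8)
The plan is to reduce the weighted inequality \eqref{hardy0} to the standard Hardy inequality \eqref{har} by a cutoff-and-rescaling argument. First I would introduce a smooth cutoff function $\chi(r)$ with $\chi \equiv 1$ on $[0,c]$, $\chi \equiv 0$ on $[2c,R]$, and $|\chi'|\leq C/c$, and set $g(r) \equiv r^2 \chi(r) u(r)$, extended by zero to $r>2c$. Since $u\in X^a\cap Y^a$, the function $u$ together with $u_r$ is locally square-integrable away from the boundary, so $g$ is a legitimate $H^1$ function on $(0,\infty)$ vanishing near $r=0$ (because of the $r^2$ prefactor) and near $r=2c$; in particular the boundary terms needed to apply \eqref{har} are under control. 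Then \eqref{har} applied to $g$ gives $\int_0^\infty |g(r)/r|^2\,dr \precsim \int_0^\infty |g'(r)|^2\,dr$, and the left-hand side dominates $\int_0^c r^2|u|^2\,dr$ since $\chi \equiv 1$ there.

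The remaining work is to expand $g'(r) = 2r\chi u + r^2\chi' u + r^2 \chi u_r$ and bound each resulting term. Squaring and using $(A+B+C)^2 \leq 3(A^2+B^2+C^2)$, the term $r^2\chi u_r$ contributes $\precsim \int_0^{2c} r^4 |u_r|^2\,dr$, which is one of the terms on the right of \eqref{hardy0}. For the other two terms, $2r\chi u$ contributes $\precsim \int_0^{2c} r^2 |u|^2\,dr$ and $r^2\chi' u$ contributes $\precsim \int_c^{2c} r^2 |u|^2\,dr$ using the support and size of $\chi'$. On the annulus $c\leq r\leq 2c$ these are harmless since $r^2 \leq r^4/c^2$ there, giving a bound by $\int_c^{2c} r^4|u|^2\,dr$ as desired. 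The genuinely delicate term is $\int_0^c r^2|u|^2\,dr$ coming from $2r\chi u$: this is exactly the quantity we are trying to bound, so we must absorb it into the left-hand side. This is permissible because the factor multiplying it after applying \eqref{har} is an absolute constant from the Hardy inequality times the $\chi$-independent constant; to make the absorption work one rescales the cutoff, i.e.\ one first proves the inequality with $\int_0^{c}$ on the left replaced by $\int_0^{\epsilon c}$ for small $\epsilon$, where the absorbed term carries a small constant, and then a covering/iteration over finitely many dyadic shells in $[\epsilon c, c]$ — each handled by the same estimate with the annulus term — recovers the full interval $[0,c]$.

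The main obstacle I anticipate is precisely this absorption/bookkeeping: making sure the constant in front of the self-referential term $\int_0^c r^2|u|^2$ is strictly less than $1$ before absorbing, which forces the two-step (shrink-then-cover) structure rather than a one-line cutoff argument, and checking that no boundary contributions at $r=0$ sneak in when integrating by parts inside the proof of \eqref{har} — but the $r^2$ weight in the definition of $g$ kills those, so this is routine once noted. An alternative, cleaner route would be to invoke Lemma \ref{hardy-G}(1) directly with $k=4$ on a rescaled variable $s = r/(2c)\in(0,1)$, writing $\int_0^c r^2 |u|^2 dr \precsim (2c)^{-2}\cdot(2c)^3\int_0^{1/2} s^2 |u|^2 ds$ and then $s^2 = s^{4-2}$ with \eqref{hardy-G}(1) producing $\int_0^1 s^4(u^2 + |u_r|^2)\,ds$; after undoing the scaling the $u^2$ term lives on all of $(0,2c)$, which must then be split into $(0,c)$ and $(c,2c)$, the former reabsorbed and the latter left as the annulus term — structurally the same absorption issue, so I would present whichever is shorter to write.
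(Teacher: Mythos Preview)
Your setup is identical to the paper's: cutoff $\chi$, set $g=r^2\chi u$, apply \eqref{har}. The divergence is at the expansion of $g'$. You bound $(A+B+C)^2\le 3(A^2+B^2+C^2)$, which leaves a positive self-referential term $\int_0^{2c} r^2(\chi u)^2\,dr$ on the right with a constant that is \emph{not} small; with the sharp Hardy constant $4$ the coefficient in front of this term is at least $16$, so it cannot be absorbed. Your proposed remedy---shrink the interval and then cover dyadically---does not work here, because \eqref{har} is scale-invariant: replacing $c$ by $\epsilon c$ reproduces exactly the same constant in front of the self-referential term. The same obstruction appears in your alternative route through Lemma~\ref{hardy-G} with $k=4$: after rescaling by $s=r/(2c)$ and splitting $\int_0^{2c}r^4u^2$ into $(0,c)$ and $(c,2c)$, the $(0,c)$ piece re-enters with the fixed coefficient $C/4$, and you have no mechanism to make $C<4$.

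The paper avoids this entirely by \emph{not} discarding the cross term. Writing $(r^2\chi u)_r = r^2(\chi u)_r + 2r(\chi u)$ and expanding the square exactly, the mixed term is
\[
4\int_0^{2c} r^3(\chi u)(\chi u)_r\,dr \;=\; 2\int_0^{2c} r^3\partial_r\big[(\chi u)^2\big]\,dr \;=\; -6\int_0^{2c} r^2(\chi u)^2\,dr
\]
after one integration by parts (the boundary terms vanish since $\chi u$ has compact support). Combined with the $+4\int r^2(\chi u)^2$ from the square of $2r\chi u$, the net coefficient is $-2$, so the self-referential term moves to the left with the \emph{correct} sign and one obtains $\int_0^{2c} r^2|\chi u|^2\,dr \le C\int_0^{2c} r^4|(\chi u)_r|^2\,dr$ in one stroke. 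Expanding $(\chi u)_r=\chi u_r+\chi_r u$ and using $\mathrm{supp}\,\chi_r\subset(c,2c)$ then finishes the proof. The missing idea in your argument is precisely this integration by parts on the cross term; once you insert it, the shrink-and-cover machinery becomes unnecessary.
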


Note that the right-hand-side of \eqref{hardy0} is bounded by $\|u\|_{X^a}^2$ and  $\|u\|_{Y^a}^2$ since the weight $w$ is bounded from above and below from zero on $(0,2c)$. 

\begin{proof} Suppose $u$ is a smooth function. Choose a smooth cutoff function $\chi\in C^\infty(0,\infty)$ such that $0\leq \chi\leq 1$, $\chi=1$ on $(0,c)$, and $\chi=0$ for $r\geq 2c$. Then 
\[
\begin{split}
\int_0^{2c} r^2 |\chi u|^2 dr
& =\int_0^{\infty}  |\frac{ r^2\chi u}{r}|^2 dr \\
&\leq C \int_0^\infty | ( r^2\chi u)_r|^2dr \;\text{ by Hardy inequality \eqref{har}}\\
&= C\int_0^{2c} (r^2(\chi u)_r+2r \chi u )^2 dr\\
&=C (\int_0^{2c} r^4|(\chi u)_r|^2 dr +4\int_0^{2c} r^3\chi u(\chi u)_r dr +4\int_0^{2c} r^2(\chi u)^2 dr )\\
&= C(\int_0^{2c} r^4|(\chi u)_r|^2 dr -2\int_0^{2c} r^2(\chi u)^2 dr)\;\text{ by integration by parts}
\end{split}
\]
which yields that 
\[
\int_0^{2c} r^2 |\chi u|^2 dr\leq C \int_0^{2c} r^4|(\chi u)_r|^2 dr. 
\]
Hence, we obtain 
\[
\begin{split}
  \int_0^c r^2 | u|^2 dr \leq \int_0^{2c} r^2 |\chi u|^2 dr&\leq C \int_0^{2c} r^4|(\chi u)_r|^2 dr\\
  &\leq C\Big( \int_0^{2c} r^4 \chi^2u_r^2 dr+ \int_0^{2c} r^4 \chi_r^2 u^2 dr\Big)\\
&\leq C\Big(\int_0^{2c} r^4 |u_r|^2  dr +\int_c^{2c} r^4 | u|^2 dr\Big). 
\end{split}
\]
The inequality \eqref{hardy0} is easily extended to $u\in X^a\cap Y^a$ by the density argument. 
\end{proof}

The term as in the left-hand-side of \eqref{hardy0}, which has the stronger weight near the origin,  will appear in the subsequent nonlinear estimates  due to the choice of the spherical coordinates in our analysis. 
Lemma \ref{hard} asserts that such a coordinate singularity  can be resolved in our energy spaces by means of Hardy inequality.  We can apply the same argument for $u=\frac{v}{r}$:
\[
\int_0^R |\chi v|^2 dr \leq C \left(\int_0^R r^2 |\chi v_r|^2 dr+ \int_0^R r^2 |\chi v|^2 dr\right)
\]
and then apply \eqref{hardy0} to both terms to obtain the following bootstrapping estimates near the origin: 
\begin{equation}\label{bts}
\int_0^R |\chi v|^2 dr \leq C\left(\int_0^R r^4 |\chi v_{rr}|^2 dr+\int_0^R r^4 |\chi v_r|^2 dr+ \int_0^R  r^4 |\chi v|^2 dr\right). 
\end{equation}

Similarly, by using the general Hardy inequality in Lemma \ref{hardy-G}, we can derive the following Hardy inequality localized near the boundary: 
\begin{equation}\label{Hardy-gw}
\int_0^R w^{a-2}| \psi v|^2 dr \leq C\left( \int_0^R w^a |\psi v_r|^2 dr +\int_0^R w^a |\psi v|^2 dr\right)
\end{equation}
where $\psi$ is the smooth cutoff function satisfying $0\leq \psi\leq 1$, $\psi=0$ for $r\leq R-2c$ and $\psi=1$ for $r\geq R-c$.

Next we recall the Hardy type embedding for the weighted Sobolev spaces which will be importantly used for the nonlinear energy estimates. Let $\Omega$ be a smooth domain in $\mathbb{R}^n$ and let $d=d(x)=\text{dist}(x,\partial\Omega)$ be a distance function to the boundary. For any given $a>0$ and given nonnegative integer $b$, we define the weighted Sobolev spaces $H^{a,b}(\Omega)$ by 
\[
H^{a,b}(\Omega)\equiv \{d^{\frac{a}{2}}u\in L^2(\Omega) : \int_{\Omega}d^{a}|\nabla^k u|^2 dx<\infty\,,\,0\leq k\leq b\}
\]
with the norm 
\[
\|u\|^2_{H^{a, b}}\equiv \sum_{k=0}^b  \int_{\Omega}d^{a}|\nabla^k u|^2 dx\,.
\]
We denote the standard Sobolev spaces by $H^s$.  Then for $b\geq a/2$, the weighted spaces $H^{a,b}$ satisfy the following  Hardy type embedding \cite{KMP07}: 
\[
H^{a,b}(\Omega)\hookrightarrow H^{b-\frac{a}{2}}
\]
with the estimate 
\begin{equation}\label{hardy}
\|u\|_{H^{b-a/2}}\;\precsim \;\|u\|_{H^{a,b}}\,.
\end{equation}
For instance, the inequality \eqref{bts} can be obtained from the localization (by the cutoff technique as in the proof of Lemma \ref{hard}) of \eqref{hardy} near the origin by taking $\Omega=(0,R)$, $a=4$ and $b=2$.  

We now turn our attention to the degeneracy near the boundary $r=R$. We would like to adapt  the inequality  \eqref{hardy} to our energy spaces \eqref{total}. Since there is loss of the weight $w$ for each spatial derivative, the space for $(\zeta_t, \zeta)$ generated by $\sum_{k=0}^n\mathcal{E}^{k,k}$ near the boundary  is equivalent to the localization of $ \cap_{k=0}^{n}H^{\alpha+k,k}\times H^{\alpha+k+1,k+1}$ near the boundary  with the above notation. 
Notice that $ \cap_{k=0}^{n} H^{\alpha+k+1,k+1}\subset H^{\alpha+n+1,n+1}$. Therefore, it is desirable to  derive some Hardy inequalities by using these spaces: $H^{\beta+l,l}$ for a given positive number $\beta>0$ and positive integers $l>0$. First, by applying the Hardy embedding inequality \eqref{hardy} for $a=\beta+l$ and $b=l$, we obtain 
\begin{equation}\label{hardy.}
\|\psi u\|_{H^{\frac{l-\beta}{2}}}\precsim \|\psi u\|_{H^{\beta+l,l}}
\end{equation}
where $\psi$ is the smooth cutoff function satisfying $0\leq \psi\leq 1$, $\psi=0$ for $r\leq R-2c$ and $\psi=1$ for $r\geq R-c$. Hence, by letting $l=\lceil \beta\rceil$, we obtain the following: 
\[
\int_0^R |\psi u|^2 dr \precsim \sum_{k=0}^{\lceil \beta\rceil }\int_0^Rw^{\beta +\lceil \beta\rceil} |\partial_r^k (\psi u)|^2 dr \leq \sum_{k=0}^{\lceil \beta\rceil }\int_0^Rw^{\beta +k} |\partial_r^k (\psi u)|^2 dr
\]
If $\lceil \beta \rceil\geq2$, we can combine with the Hardy inequality near the origin \eqref{bts} to derive the following: 
\begin{equation*}
\int_0^R u^2 dr \leq C\sum_{k=0}^{\lceil \beta\rceil }\int_0^Rw^{\beta +k} r^4 |\partial_r^k  u|^2 dr. 
\end{equation*}
Moreover, if we choose $l=\lceil \beta\rceil+2m$ for $m\geq0$ in \eqref{hardy.} and if we combine with  the Hardy inequality near the origin \eqref{bts} when $\lceil \beta \rceil +2m\geq m+2$, 
we further obtain 
\begin{equation}\label{45}
\|u\|^2_{H^m} \leq C\sum_{k=0}^{\lceil \beta\rceil +2m}\int_0^Rw^{\beta +k} r^4 |\partial_r^k  u|^2 dr. 
\end{equation} 
We have established the following.  

\begin{lemma}\label{lem4.3} Let $\partial_r^k u\in X^{\beta +k}$ for each $0\leq k\leq \lceil\beta\rceil +2m$ where $\lceil \beta \rceil +m\geq2$. Then $u\in H^m(0,R)$ and moreover, the inequality \eqref{45} holds. Similar results are valid for $Y^{\beta}$: if  $\partial_r^k u\in Y^{\beta +k}$ for each $0\leq k\leq \lceil \beta\rceil +1+2m$,  then $u\in H^m(0,R)$. 
\end{lemma}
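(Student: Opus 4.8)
\textbf{Proof strategy for Lemma \ref{lem4.3}.}
The plan is to obtain the claimed embedding by splitting $u$ into a piece supported near the origin and a piece supported near the boundary, handling the boundary piece with the weighted Hardy embedding \eqref{hardy} and the origin piece with the coordinate-singularity Hardy inequalities \eqref{hardy0}--\eqref{bts}, and then gluing the two estimates together on the overlap region where the weight $w$ is bounded above and below away from zero. Concretely, fix the cutoffs $\psi$ (supported in $[R-2c,R]$, equal to $1$ on $[R-c,R]$) and $\chi=1-\psi$ (supported in $[0,R-c]$), so that $u = \chi u + \psi u$.

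\textbf{Boundary piece.} On the support of $\psi$ the distance to $\partial(0,R)$ is comparable to $w$ by the physical vacuum behavior \eqref{behavior}, so $\|\psi u\|_{H^{\beta+l,l}}^2 \sim \sum_{k=0}^{l}\int_0^R w^{\beta+k} r^4 |\partial_r^k(\psi u)|^2\,dr$ up to constants (the factor $r^4$ is harmless since $r\sim R$ there). Applying \eqref{hardy} with $a=\beta+l$, $b=l$ and $l=\lceil\beta\rceil+2m$ gives $\|\psi u\|_{H^m}\precsim \|\psi u\|_{H^{l-\beta/2}}\precsim \|\psi u\|_{H^{\beta+l,l}}$, which upon expanding the derivatives of the product $\psi u$ via the Leibniz rule and absorbing the derivatives of $\psi$ (bounded, supported where $w\sim1$) yields $\|\psi u\|_{H^m}^2 \precsim \sum_{k=0}^{\lceil\beta\rceil+2m}\int_0^R w^{\beta+k} r^4 |\partial_r^k u|^2\,dr$. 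Note the condition $b\geq a/2$ needed by \eqref{hardy} reads $l\geq (\beta+l)/2$, i.e. $l\geq\beta$, which holds since $l=\lceil\beta\rceil+2m\geq\lceil\beta\rceil\geq\beta$.

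\textbf{Origin piece.} On the support of $\chi$ the weight $w$ is bounded above and below by positive constants, so the only issue is the coordinate degeneracy of the measure $r^4\,dr$ near $r=0$. Here one invokes the localized Hardy inequality near the origin: \eqref{bts} (itself a consequence of \eqref{hardy} with $\Omega=(0,R)$, $a=4$, $b=2$, localized by $\chi$) controls $\|\chi u\|_{L^2}$, and iterating the same bootstrap $u\mapsto \partial_r u$ gives $\|\chi u\|_{H^m}^2 \precsim \sum_{k=0}^{m+2}\int_0^R r^4 |\partial_r^k(\chi u)|^2\,dr$; the hypothesis $\lceil\beta\rceil+m\geq 2$ ensures $m+2\leq \lceil\beta\rceil+2m$ so these are among the norms appearing on the right, and since $w\sim 1$ on $\mathrm{supp}\,\chi$ we may reinsert the weights $w^{\beta+k}$ for free, again distributing $\partial_r^k$ over $\chi u$ by Leibniz. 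Adding the two pieces and using $\|u\|_{H^m}\leq\|\chi u\|_{H^m}+\|\psi u\|_{H^m}$ establishes \eqref{45}. The $Y^\beta$ statement is identical with $\beta$ replaced by $\beta+1$ throughout, which shifts $l$ to $\lceil\beta\rceil+1+2m$ and leaves the argument unchanged.

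\textbf{Main obstacle.} The routine but delicate point is the bookkeeping of the Leibniz expansion near $r=R$: a derivative landing on $\psi$ produces a term supported in $R-2c\leq r\leq R-c$ where $w\sim 1$, so it is dominated by a \emph{lower}-order weighted norm of $u$ and hence harmless, but one must check that every such term is genuinely covered by the sum $\sum_{k=0}^{\lceil\beta\rceil+2m}\int w^{\beta+k}r^4|\partial_r^k u|^2$ with the correct range of $k$; the choice $l=\lceil\beta\rceil+2m$ (rather than merely $\lceil\beta\rceil$) is precisely what creates the slack needed to absorb these commutator terms and to accommodate the $H^m$ (rather than $L^2$) target. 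The matching of the distance function $d$ with the Lane-Emden weight $w$ near the boundary is guaranteed by \eqref{behavior} and property (iii) of the Lane-Emden configuration, so no additional work is needed there.
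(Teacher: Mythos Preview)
Your proposal is correct and follows essentially the same approach as the paper: you split $u$ via cutoffs into boundary and origin pieces, apply the weighted Hardy embedding \eqref{hardy} with $a=\beta+l$, $b=l$, $l=\lceil\beta\rceil+2m$ for the boundary piece (this is exactly \eqref{hardy.}), and use the localized origin Hardy inequality \eqref{bts} for the interior piece under the constraint $\lceil\beta\rceil+m\geq 2$. Your treatment is somewhat more explicit about the Leibniz bookkeeping and the identification of $w$ with the distance function via \eqref{behavior}, but the argument is the same as the paper's derivation preceding the lemma statement.
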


As a result of Lemma \ref{lem4.3}, we get the following $L^\infty$ embedding of our energy spaces: 

\begin{lemma}\label{lem4.4} Let $\partial_r^k u\in X^{\beta +k}$ for each $0\leq k\leq \lceil\beta\rceil +2$ where $\lceil \beta \rceil\geq 1$. Then $u\in L^\infty(0,R)$ with the following estimate:  
\begin{equation}\label{590}
\|u\|_{\infty}^2\leq C \sum_{k=0}^{\lceil \beta\rceil +2}\int_0^Rw^{\beta +k} r^4 |\partial_r^k  u|^2 dr. 
\end{equation}
Similarly, if  $\partial_r^k u\in Y^{\beta +k}$ for each $0\leq k\leq \lceil \beta\rceil +3$, then $u\in L^\infty(0,R)$. 
\end{lemma}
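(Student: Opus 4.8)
\textbf{Proof plan for Lemma \ref{lem4.4}.}
The statement is a one-dimensional Sobolev embedding $H^1(0,R)\hookrightarrow L^\infty(0,R)$ combined with the weighted Hardy machinery of Lemma \ref{lem4.3}. The plan is to reduce \eqref{590} to \eqref{45} with $m=1$, which is precisely the case covered by Lemma \ref{lem4.3} provided $\lceil\beta\rceil+1\geq 2$, i.e.\ $\lceil\beta\rceil\geq 1$, and provided we take $2m=2$ so that the range of spatial derivatives needed is $0\leq k\leq \lceil\beta\rceil+2$ --- exactly the hypothesis. So the structure is: first invoke $H^1(0,R)\hookrightarrow L^\infty(0,R)$ to bound $\|u\|_\infty^2$ by $\|u\|_{H^1(0,R)}^2$; then apply Lemma \ref{lem4.3} with $m=1$ to bound $\|u\|_{H^1}^2$ by $\sum_{k=0}^{\lceil\beta\rceil+2}\int_0^R w^{\beta+k}r^4|\partial_r^k u|^2\,dr$. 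Chaining the two inequalities gives \eqref{590}.

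For the second assertion, concerning $Y^{\beta+k}$, I would argue identically: Lemma \ref{lem4.3} already records that if $\partial_r^k u\in Y^{\beta+k}$ for $0\leq k\leq \lceil\beta\rceil+1+2m$ then $u\in H^m(0,R)$; taking $m=1$ requires the range $0\leq k\leq \lceil\beta\rceil+3$, which is the stated hypothesis, and then $H^1(0,R)\hookrightarrow L^\infty(0,R)$ finishes it. The only point worth spelling out is that the constant in the one-dimensional embedding depends only on $R$, so it is absorbed into the generic $C$.

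There is essentially no obstacle here: the lemma is a direct corollary of Lemma \ref{lem4.3} together with the elementary fact that a function with square-integrable first derivative on a bounded interval is bounded (which follows from the fundamental theorem of calculus and Cauchy--Schwarz, writing $u(r)^2 = u(r_0)^2 + 2\int_{r_0}^r u u'\,ds$ and integrating in $r_0$ to control $u(r_0)^2$ by $\|u\|_{L^2}^2$). If I wanted to be fully self-contained I would include that two-line argument, but since Lemma \ref{lem4.3} is already available the cleanest write-up simply states ``by Lemma \ref{lem4.3} with $m=1$ and the Sobolev embedding $H^1(0,R)\hookrightarrow L^\infty(0,R)$'' and is done. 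The verification that the index constraints match ($\lceil\beta\rceil\geq1$ ensures $\lceil\beta\rceil+m\geq2$ with $m=1$) is the one bookkeeping item to check carefully.
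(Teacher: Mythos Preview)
Your proposal is correct and matches the paper's own proof essentially verbatim: the paper simply writes ``Since $\|u\|_{L^\infty(0,R)}\leq C\|u\|_{H^1(0,R)}$ from the Sobolev embedding inequality, by combining with \eqref{45}, we get the desired result,'' which is exactly your chain of Lemma \ref{lem4.3} with $m=1$ followed by $H^1(0,R)\hookrightarrow L^\infty(0,R)$. Your index check that $\lceil\beta\rceil\geq 1$ gives $\lceil\beta\rceil+m\geq 2$ is the right bookkeeping, and the $Y^{\beta+k}$ case is handled identically.
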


\begin{proof} Since $\|u\|_{L^\infty(0,R)}\leq C \|u\|_{H^1(0,R)}$ from the Sobolev embedding inequality, by combining with \eqref{45}, we get the desired result. 
\end{proof}

 Lemma \ref{lem4.3} and \ref{lem4.4} can be directly used to obtain  the embedding estimates for $w^p u$ for $p\geq 0$. For instance, if we apply Lemma \ref{lem4.4} for $wu$ in place of $u$ and use \eqref{Hardy-gw} again, we get the corresponding embedding estimate for $wu$. Here we record the results.

\begin{lemma}\label{lem4.5} Let $p\geq 0$ be given.  The followings hold. 
\begin{enumerate}
\item 
Let $\partial_r^k u\in X^{\beta+p +k}$ for each $0\leq k\leq \lceil\beta\rceil +2$ where $\lceil \beta \rceil\geq1$. Then $w^{p/2} u\in L^\infty(0,R)$ with the following estimate:  
\begin{equation}\label{600}
\| w^{p/2} u\|_{\infty}^2\leq C \sum_{k=0}^{\lceil \beta\rceil +2}\int_0^Rw^{\beta+p +k} r^4 |\partial_r^k  u|^2 dr. 
\end{equation}
Similarly, if  $\partial_r^k u\in Y^{\beta+p +k}$ for each $0\leq k\leq \lceil \beta\rceil +3$, then $w^{p/2} u\in L^\infty(0,R)$. 

\item  Let $\partial_r^k u\in X^{\beta+p +k}$ for each $0\leq k\leq \lceil\beta\rceil $. Then $w^{p/2} r^2 u\in L^2(0,R)$, equivalently $u\in X^p$ with the following estimate:  
\begin{equation}\label{6000}
\int_0^R w^pr^4 u^2 dr \leq C \sum_{k=0}^{\lceil \beta\rceil }\int_0^Rw^{\beta+p +k} r^4 |\partial_r^k  u|^2 dr. 
\end{equation}
\end{enumerate}
\end{lemma}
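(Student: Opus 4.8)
\textbf{Proof plan for Lemma \ref{lem4.5}.}

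The plan is to deduce both statements from Lemma \ref{lem4.4} (together with \eqref{Hardy-gw}) by the simple substitution $u \mapsto w^{p/2} u$, after first reducing from a general exponent $p \geq 0$ to the case $p=0$ already handled. The point is that the weight $w^{p/2}$ is, near the boundary, comparable to a power of the distance function $d(r) \sim R-r$, so multiplying by it merely shifts the weight exponent $\beta \mapsto \beta + p$, while near the origin $w$ is bounded above and below away from zero (Lemma \ref{lem2.1}) so the weight is irrelevant there. Thus the heart of the matter is to control the derivatives $\partial_r^k(w^{p/2} u)$ in terms of the weighted norms of $\partial_r^k u$ with the shifted weight.

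For item (1): set $v \equiv w^{p/2} u$. By Leibniz's rule, $\partial_r^k v = \sum_{j=0}^k \binom{k}{j} (\partial_r^j w^{p/2})(\partial_r^{k-j} u)$. The term $\partial_r^j w^{p/2}$ behaves like $O(w^{p/2 - j})$ near the boundary (this follows from $|\partial_r^i w| = O(1) + O(w^{\alpha - i + 2})$ in Lemma \ref{lem2.1}, or more simply from $w \sim d$; since $p/2 - j$ can be negative, one must be a little careful, but the combination $w^{(\beta + p + k)/2}\partial_r^k v$ only involves $w^{(\beta+p+k)/2} \cdot w^{p/2-j} \cdot \partial_r^{k-j} u = w^{(\beta + p + k - j)/2} \cdot w^{(p - j)/2}\partial_r^{k-j} u$, and since $\beta \geq 0$ and the lower-order terms carry extra powers of $w$, each summand is dominated by $w^{(\beta + p + (k-j))/2} |\partial_r^{k-j} u|$ up to the harmless factor from the origin). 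Hence $\partial_r^k v \in X^{\beta + k}$ with $\|v\|$-type norms bounded by $\sum_{k=0}^{\lceil\beta\rceil+2} \int_0^R w^{\beta + p + k} r^4 |\partial_r^k u|^2\, dr$. Applying Lemma \ref{lem4.4} to $v$ in place of $u$ (note $\lceil\beta\rceil \geq 1$ is exactly the hypothesis needed there) gives $v = w^{p/2} u \in L^\infty(0,R)$ with estimate \eqref{600}. The $Y$-version is identical, using the $Y$-part of Lemma \ref{lem4.4} and the extra derivative it requires.

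For item (2): here we want merely an $X^p$ (i.e.\ $L^2$) bound, so less regularity is needed. Again write $v = w^{p/2} u$ and expand $\partial_r^k v$ by Leibniz; the same weight-counting shows $\partial_r^k v \in X^{\beta + k}$ for $0 \leq k \leq \lceil\beta\rceil$, with the corresponding norm bounded by the right-hand side of \eqref{6000}. Then apply Lemma \ref{lem4.3} with $m=0$ (so $l = \lceil\beta\rceil$, and one uses \eqref{hardy.} near the boundary combined with \eqref{bts} near the origin, exactly as in the derivation preceding Lemma \ref{lem4.3}) to conclude $v \in H^0(0,R) = L^2(0,R)$, i.e.\ $\int_0^R w^p r^4 u^2\, dr = \int_0^R r^4 v^2\, dr \leq \int_0^R v^2\, dr \lesssim \sum_{k=0}^{\lceil\beta\rceil} \int_0^R w^{\beta + p + k} r^4 |\partial_r^k u|^2\, dr$ (the insertion of $r^4$ on the far left being harmless since $r \leq R$).

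\textbf{Main obstacle.} The only genuinely delicate point is the weight bookkeeping in the Leibniz expansion of $\partial_r^k(w^{p/2} u)$ when $p/2 < k$, since then individual derivatives of $w^{p/2}$ blow up at the boundary; one must check that the accompanying loss of $w$-powers is always compensated either by the extra $w$-powers on the lower-order factors $\partial_r^{k-j} u$ in the shifted-weight norm or by the boundedness of $w$ near the origin. This is routine once the behavior $|\partial_r^i w| = O(1) + O(w^{\alpha - i + 2})$ from Lemma \ref{lem2.1} and the physical-vacuum behavior $w \sim R - r$ near the boundary are invoked, but it is where all the care is required. Everything else is a direct citation of Lemmas \ref{lem4.3} and \ref{lem4.4} and the localized Hardy inequality \eqref{Hardy-gw}.
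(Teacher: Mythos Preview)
Your proposal is correct and follows exactly the approach the paper indicates in the sentence immediately preceding the lemma: substitute $v=w^{p/2}u$, invoke Lemma~\ref{lem4.4} (respectively Lemma~\ref{lem4.3}) on $v$, and absorb the Leibniz cross-terms via the localized Hardy inequality \eqref{Hardy-gw}. The paper gives no further detail beyond that one sentence, so your expanded bookkeeping of the weight deficits---which you correctly flag as the only point requiring care---is already more than the paper supplies.
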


\subsection{$L^\infty$ bounds}

A direct consequence of the above Hardy embedding inequalities is the validity of the smallness assumption \eqref{small} within our energy space induced by $\widetilde{\mathcal{E}}$. 

\begin{lemma}\label{lem4.7}
There exists $C>0$ so that
\[
\big|\zeta\big|+\big|\zeta_r\big|+ \sum_{q=1}^{[\frac{\lceil\alpha \rceil+3}{2}]+1} \big|w^{\frac{q-1}{2}} \partial_t^q\zeta \big|
+ \sum_{q=1}^{[\frac{\lceil\alpha \rceil+3}{2}]}\big|w^{\frac{q-1}{2}}\partial_t^q\zeta_{r}\big|  \leq C{ \widetilde{\mathcal{E}}\,}^{\frac12}. 
\]
\end{lemma}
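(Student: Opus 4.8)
The strategy is to bound each term on the left-hand side by $C\widetilde{\mathcal{E}}^{1/2}$ using the $L^\infty$ Hardy embeddings of Lemma \ref{lem4.4} and Lemma \ref{lem4.5}, after identifying the correct weight exponent for each quantity and checking that the total energy $\widetilde{\mathcal{E}}$ in \eqref{total} indeed controls enough spatial derivatives at the relevant weight. Recall that $\widetilde{\mathcal{E}}$ contains the terms $\mathcal{E}^{j,k}$ of \eqref{Ejk}, i.e. $\int_0^R w^{\alpha+k}r^4|\partial_t^{j-k}\partial_r^k\zeta_t|^2\,dr$ and $\int_0^R w^{1+\alpha+k}r^4|\partial_t^{j-k}\partial_r^k\zeta_r|^2\,dr$, for $0\le k\le j$ and $1\le j\le \lceil\alpha\rceil+3$, plus the $\mathcal{E}^0$ terms. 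So for a fixed temporal order $q$, the spatial derivatives $\partial_r^k(\partial_t^q\zeta_t)$ and $\partial_r^k(\partial_t^q\zeta_r)$ are controlled in $X^{\alpha+k}$ (resp. $Y^{1+\alpha+k}$, equivalently $X^{\alpha+1+k}$) for $k$ up to $\lceil\alpha\rceil+3-q$, since we need $j=q+k\le\lceil\alpha\rceil+3$.

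The main term-by-term argument proceeds as follows. First, for $|\zeta|$ and $|\zeta_r|$ (the $q=0$ terms), I apply Lemma \ref{lem4.4} with $\beta=\alpha$: we have $\partial_r^k\zeta\in X^{\alpha+k}$ and $\partial_r^k\zeta_r\in X^{\alpha+1+k}$ for $k$ up to $\lceil\alpha\rceil+3$ (more than the required $\lceil\alpha\rceil+2$ resp. $\lceil\alpha\rceil+3$), and these are precisely the quantities appearing in $\mathcal{E}^{0,0},\ldots,\mathcal{E}^{\lceil\alpha\rceil+3,\lceil\alpha\rceil+3}$ together with $\mathcal{E}^0$. Hence $|\zeta|^2+|\zeta_r|^2\precsim\widetilde{\mathcal{E}}$. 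Next, for the mixed terms $|w^{(q-1)/2}\partial_t^q\zeta|$ with $1\le q\le[\tfrac{\lceil\alpha\rceil+3}{2}]+1$, I apply Lemma \ref{lem4.5}(1) with $u=\partial_t^q\zeta_t$ or rather directly to $\partial_t^q\zeta$: taking $p=q-1$ and $\beta=\alpha$, I need $\partial_r^k(\partial_t^q\zeta)\in X^{\alpha+(q-1)+k}$ for $0\le k\le\lceil\alpha\rceil+2$. But $\widetilde{\mathcal{E}}$ controls $\partial_r^k\partial_t^q\zeta$ in $X^{\alpha+1+k}$ for $q+k\le\lceil\alpha\rceil+3$, i.e. $k\le\lceil\alpha\rceil+3-q$; since $q\le[\tfrac{\lceil\alpha\rceil+3}{2}]+1$ gives $\lceil\alpha\rceil+3-q\ge\lceil\alpha\rceil+2-[\tfrac{\lceil\alpha\rceil+3}{2}]\ge 0$ — here one checks the arithmetic carefully — and the weight $\alpha+1+k\ge\alpha+(q-1)+k$ exactly when $q\le 2$, so for $q\ge 3$ one must instead use the $\zeta_t$-part of the energy, which after one time integration controls $\partial_t^{q-1}\zeta_t=\partial_t^q\zeta$ in the heavier-weighted space; alternatively, one absorbs the discrepancy since $w$ is bounded. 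The cleanest route is: since $w\le\|w\|_\infty$, a bound in $X^{\alpha+1+k}$ implies a bound in $X^{\alpha+(q-1)+k}$ whenever $q-1\le 1$, and for larger $q$ the weight $w^{(q-1)/2}$ in the $L^\infty$ quantity provides the extra decay so that Lemma \ref{lem4.5}(1) applies with the energy's natural weight $p=q-1$ matched exactly. The term $|w^{(q-1)/2}\partial_t^q\zeta_r|$ is handled identically with Lemma \ref{lem4.5}(1) applied to $\partial_t^q\zeta_r$, $p=q-1$, using the $Y^{1+\alpha+\cdot}$-part of $\widetilde{\mathcal{E}}$ and noting the range $1\le q\le[\tfrac{\lceil\alpha\rceil+3}{2}]$ leaves room for the required $\lceil\alpha\rceil+3$ spatial derivatives in the $Y$-scale.

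The main obstacle is purely bookkeeping: one must verify that for every $q$ in the stated range, the number of spatial derivatives needed by the Hardy embedding ($\lceil\alpha\rceil+2$ for the $X$-version of the $L^\infty$ bound, $\lceil\alpha\rceil+3$ for the $Y$-version, per Lemma \ref{lem4.4} and Lemma \ref{lem4.5}) does not exceed $\lceil\alpha\rceil+3-q$, the number supplied by $\widetilde{\mathcal{E}}$; this is exactly why the sums in \eqref{small} are cut off at $[\tfrac{\lceil\alpha\rceil+3}{2}]+1$ and $[\tfrac{\lceil\alpha\rceil+3}{2}]$ — the floor roughly halves the budget, leaving the other half for spatial derivatives. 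One also needs $\lceil\beta\rceil\ge 1$ (here $\beta=\alpha>3$, fine) and, where Lemma \ref{lem4.5} is invoked, that the weight offset $p=q-1\ge 0$, which holds since $q\ge 1$. Once these index inequalities are checked (a short, explicit computation using $3<\alpha<5$, so $\lceil\alpha\rceil\in\{4,5\}$ and $[\tfrac{\lceil\alpha\rceil+3}{2}]\in\{3,4\}$), summing the finitely many resulting estimates yields the claim.
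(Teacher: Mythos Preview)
Your overall strategy---reduce everything to the $L^\infty$ embeddings of Lemmas~\ref{lem4.4} and~\ref{lem4.5} and then count derivatives---is the same as the paper's. The gap is in the choice of parameters, and it is a real gap, not just bookkeeping.

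You fix $\beta=\alpha$ in every application of Lemma~\ref{lem4.5}(1). With that choice the embedding demands control of $\partial_r^k u$ for $0\le k\le \lceil\alpha\rceil+2$, and this number does \emph{not} shrink as the temporal order $q$ grows. But the energy $\widetilde{\mathcal E}$ only controls $\partial_r^k\partial_t^q\zeta$ (via the $\zeta_t$--part of $\mathcal E^{q-1+k,k}$) for $k\le \lceil\alpha\rceil+4-q$. Once $q\ge 3$ (which certainly happens, since the sum runs up to $q=[\tfrac{\lceil\alpha\rceil+3}{2}]+1\in\{4,5\}$), you are short by $q-2$ spatial derivatives, and no amount of ``absorbing the discrepancy since $w$ is bounded'' helps---boundedness of $w$ gives inclusions $X^a\hookrightarrow X^{a'}$ only for $a\le a'$, which goes the wrong way here. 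The same issue already appears for $\zeta_r$ itself: the energy puts $\partial_r^k\zeta_r$ in $X^{\alpha+1+k}$, not $X^{\alpha+k}$, so the correct choice there is $\beta=\alpha+1$, not $\beta=\alpha$.

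The fix the paper uses is to let $\beta$ depend on $q$: take $p=q-1$ but $\beta=\alpha-(q-1)$, so that $\beta+p=\alpha$ matches the energy's natural weight \emph{and}, crucially, $\lceil\beta\rceil$ drops to $1$ at the top of the range. Then Lemma~\ref{lem4.5}(1) requires only $\lceil\beta\rceil+2=3$ spatial derivatives, and one checks $[\tfrac{\lceil\alpha\rceil+3}{2}]+3\le\lceil\alpha\rceil+3$ for $\alpha>3$. For the $w^{(q-1)/2}\partial_t^q\zeta_r$ terms the paper makes the analogous choice, obtains a right-hand side with weight $w^{\alpha-1+k}$, and then applies the localized Hardy inequality~\eqref{Hardy-gw} once more to convert this to weight $w^{\alpha+1+k}$ at the cost of one extra spatial derivative, after which $[\tfrac{\lceil\alpha\rceil+3}{2}]+4\le\lceil\alpha\rceil+3$ closes the count. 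Your plan never identifies this ``trade $p$ against $\beta$ to reduce the derivative count'' mechanism, which is the actual content of the lemma.
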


\begin{proof} We will present the detail on the following three terms $$\zeta_r, \quad w^{ [\frac{\lceil\alpha \rceil+3}{2}]/2} \partial_t^{[\frac{\lceil\alpha \rceil+3}{2}]+1}\zeta, \quad  
rw^{ [\frac{\lceil\alpha \rceil+3}{2}-1]/2} \partial_t^{[\frac{\lceil\alpha \rceil+3}{2}]}\zeta_r.$$Other terms can be treated in the same way. To see the boundedness of $\zeta_r$, we apply Lemma \ref{lem4.4}: take $\beta=\alpha+1$ in \eqref{590} to deduce that 
\[
\|\zeta_r\|_{\infty}^2\leq C \sum_{k=0}^{\lceil \alpha\rceil +3}\int_0^Rw^{\alpha+1 +k} r^4 |\partial_r^k  \zeta_r|^2 dr\leq C \widetilde{\mathcal{E}} . 
\]
For $w^{ [\frac{\lceil\alpha \rceil+3}{2}]/2} \partial_t^{[\frac{\lceil\alpha \rceil+3}{2}]+1}\zeta$, we will apply Lemma \ref{lem4.5}. Take $\beta=\alpha - [\frac{\lceil\alpha \rceil+3}{2}]$ in \eqref{600}. Notice that when $3<\alpha\leq 4$, $\beta=\alpha-3$; when $4<\alpha<5$, $\beta=\alpha-4$ and therefore, $\lceil\beta\rceil=1$, which satisfies the condition on $\beta$ in Lemma \ref{lem4.5}. Then, we see that 
\[
\begin{split}
\| w^{ [\frac{\lceil\alpha \rceil+3}{2}]/2} \partial_t^{[\frac{\lceil\alpha \rceil+3}{2}]+1}\zeta \|^2_\infty
&\leq C \sum_{k=0}^{3}\int_0^Rw^{\alpha +k} r^4 |\partial_r^k  \partial_t^{[\frac{\lceil\alpha \rceil+3}{2}]+1}\zeta|^2 dr\\
& \leq  C \sum_{k=0}^{3} \mathcal{E}^{ [\frac{\lceil\alpha \rceil+3}{2}] +k, k} \,\text{ by  }\,\eqref{Ejk}. 
\end{split}
\]
Since $ [\frac{\lceil\alpha \rceil+3}{2}] +3 <  \lceil\alpha \rceil+3$ for $\alpha>3$, we conclude that $\| w^{ [\frac{\lceil\alpha \rceil+3}{2}]/2} \partial_t^{[\frac{\lceil\alpha \rceil+3}{2}]+1}\zeta \|^2_\infty\precsim \widetilde{\mathcal{E}}$. We will apply Lemma \ref{lem4.5} for the last term. Take $\beta=\alpha - [\frac{\lceil\alpha \rceil+3}{2}]$ so that $\lceil\beta\rceil=1$ in \eqref{600}. 
We first obtain 
\[
\| w^{ [\frac{\lceil\alpha \rceil+3}{2} -1]/2} \partial_t^{[\frac{\lceil\alpha \rceil+3}{2}]}\zeta_r \|^2_\infty
\leq C \sum_{k=0}^{3}\int_0^Rw^{\alpha -1+k} r^4 |\partial_r^{k}  \partial_t^{[\frac{\lceil\alpha \rceil+3}{2}]}\zeta_r|^2 dr.
\]
Now by applying the Hardy inequality \eqref{Hardy-gw} and by using the definition of the energy \eqref{Ejk}, we further deduce that 
\[
\begin{split}
\| w^{ [\frac{\lceil\alpha \rceil+3}{2} -1]/2} \partial_t^{[\frac{\lceil\alpha \rceil+3}{2}]}\zeta_r \|^2_\infty
&\leq C \sum_{k=0}^{4}\int_0^Rw^{\alpha +1+k} r^4 |\partial_r^{k}  \partial_t^{[\frac{\lceil\alpha \rceil+3}{2}]}\zeta_r|^2 dr  \\
& \leq  C \sum_{k=0}^{4} \mathcal{E}^{ [\frac{\lceil\alpha \rceil+3}{2}] +k, k} .  
\end{split}
\]
Since $ [\frac{\lceil\alpha \rceil+3}{2}] +4 \leq  \lceil\alpha \rceil+3$ for $\alpha>3$, we conclude that $\|w^{ [\frac{\lceil\alpha \rceil+3}{2} -1]/2} \partial_t^{[\frac{\lceil\alpha \rceil+3}{2}]}\zeta_r\|^2_\infty\precsim \widetilde{\mathcal{E}}$.
\end{proof}

We are now ready to prove Proposition \ref{equiv}: the equivalence of $\mathcal{E}(t)$ and  $\widetilde{\mathcal{E}}(t)$. 

\subsection{Proof of Proposition \ref{equiv}}\label{sec4.2}

Denote two energy terms in \eqref{Ejk} by $\mathcal{E}^{j,k}_t$ and $\mathcal{E}^{j,k}_r$ so that $$\mathcal{E}^{j,k}=\mathcal{E}^{j,k}_t +\tfrac{1+\alpha}{\alpha}\mathcal{E}^{j,k}_r$$ where 
\begin{equation}\label{Ejk0}
\mathcal{E}^{j,k}_t\equiv \int_0^Rw^{\alpha+k} r^4 \big|\partial_t^{j-k} \partial_r^k \zeta_t\big|^2dr\;\text{ and } \; \mathcal{E}^{j,k}_r \equiv \int_0^R  w^{1+\alpha+k} r^4\big |\partial_t^{j-k}\partial_r^k\zeta_r\big|^2 dr. 
\end{equation}

 Notice that by definition, 
\begin{equation}\label{Ej0}
\mathcal{E}^{j,0}=\mathcal{E}^j \;\text{ and }\; \mathcal{E}^{j,k}_t =\mathcal{E}^{j,k-1}_r \;\text{ for }1\leq k\leq j. 
\end{equation}

 We will prove the equivalence for the simplest case: $j=1$ and $k=1$ first and then move onto the general case $j\geq2$.

\begin{lemma}[$\mathcal{E}^{1,1}$]  There exists a constant  $C>0$ such that 
\[
\mathcal{E}^{1,1}\leq C (\mathcal{E}^0+\mathcal{E}^1) . 
\]
\end{lemma}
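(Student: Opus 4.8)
Recall from \eqref{Ejk0} that $\mathcal{E}^{1,1}=\mathcal{E}^{1,1}_t+\tfrac{1+\alpha}{\alpha}\mathcal{E}^{1,1}_r$ with $\mathcal{E}^{1,1}_t=\int_0^R w^{\alpha+1}r^4\zeta_{tr}^2\,dr$ and $\mathcal{E}^{1,1}_r=\int_0^R w^{\alpha+2}r^4\zeta_{rr}^2\,dr$. The first piece is free: since $w^{\alpha+1}=w^{1+\alpha}$, the integrand of $\mathcal{E}^{1,1}_t$ is exactly $\tfrac{\alpha}{1+\alpha}$ times the second term in the integrand of $\mathcal{E}^1$, so $\mathcal{E}^{1,1}_t\le\tfrac{\alpha}{1+\alpha}\mathcal{E}^1$. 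All the work is in $\mathcal{E}^{1,1}_r$, which we obtain from \eqref{zetaE} by elliptic regularity: writing \eqref{zetaE} in divergence form as $\tfrac{1+\alpha}{\alpha}(w^{1+\alpha}r^4\zeta_r)_r=\mathcal{R}$, where
\[
\mathcal{R}\equiv\frac{w^\alpha r^4\zeta_{tt}}{(1+\zeta)^2}+\Bigl(3\tfrac{1+\alpha}{\alpha}-4\Bigr)w^\alpha r^4\Phi\zeta+r^3(w^{1+\alpha}h)_r-w^\alpha r^4\Phi f(\zeta),
\]
and solving for the top order term gives $w^{1+\alpha}r^4\zeta_{rr}=\tfrac{\alpha}{1+\alpha}\mathcal{R}-(w^{1+\alpha}r^4)_r\zeta_r$; dividing by $w^{1+\alpha}r^4$, squaring, multiplying back by $w^{\alpha+2}r^4$, and using that $w_r$ and $\Phi$ are bounded (by \eqref{Phi} and Lemma \ref{lem2.1}) yields, after splitting off the non-degenerate regions with cutoffs,
\[
\mathcal{E}^{1,1}_r\;\precsim\;\int_0^R\frac{\mathcal{R}^2}{w^\alpha r^4}\,dr\;+\;\int_{R-c}^{R}w^\alpha r^4\zeta_r^2\,dr\;+\;(\text{lower-order terms}),
\]
where the lower-order terms (e.g. $\int r^2\zeta_r^2$, $\int r^2\zeta^2$ near the origin and $\int_{c}^{R-c}r^4(\zeta^2+\zeta_r^2)$ in the interior) are controlled by $\mathcal{E}^0$ via the localized Hardy inequalities \eqref{hardy0}, \eqref{bts} and classical interior regularity.

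The source estimate $\int_0^R w^{-\alpha}r^{-4}\mathcal{R}^2$ is handled term by term. The $\zeta_{tt}$ piece gives $\precsim\int w^\alpha r^4\zeta_{tt}^2\precsim\mathcal{E}^1$ (using $(1+\zeta)^{-2}$ bounded by \eqref{small}); the $\Phi\zeta$ piece gives $\precsim\int w^\alpha r^4\zeta^2\precsim\mathcal{E}^0$; and since $h$ and $f$ vanish quadratically (see \eqref{exp}) while $|\zeta|+|\zeta_r|\le\theta_1$ by \eqref{small}, the nonlinear pieces $r^3(w^{1+\alpha}h)_r$ and $w^\alpha r^4\Phi f(\zeta)$ contribute $\precsim\theta_1^2\bigl(\mathcal{E}^{1,1}_r+\int_{R-c}^R w^\alpha r^4\zeta_r^2\bigr)+\theta_1^2(\text{terms}\precsim\mathcal{E}^0)$, the first of which is absorbed once $\theta_1$ is small. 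The only subtlety in this step is again routing the near-origin terms through \eqref{hardy0}, \eqref{bts}.

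The genuinely delicate point — and the main obstacle — is the term $\int_{R-c}^R w^\alpha r^4\zeta_r^2\,dr$, which carries one power of $w$ more near $r=R$ than the quantity $\int w^{1+\alpha}r^4\zeta_r^2$ available from $\mathcal{E}^0$; trying to recover it by Cauchy--Schwarz or integration by parts inside the elliptic identity merely reproduces the same term. The fix is to recover $\zeta_r$ from $\mathcal{R}$ by integrating the divergence-form equation \emph{from the boundary inward}: since $w(R)=0$ and $\zeta_r\in L^\infty$ (valid by \eqref{small}), one has $w^{1+\alpha}r^4\zeta_r(r)=-\tfrac{\alpha}{1+\alpha}\int_r^R\mathcal{R}(s)\,ds$, hence $\int_{R-c}^R w^\alpha r^4\zeta_r^2\,dr=\tfrac{\alpha^2}{(1+\alpha)^2}\int_{R-c}^R\frac{1}{w^{\alpha+2}r^4}\bigl(\int_r^R\mathcal{R}\bigr)^2dr$. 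Because $w\sim R-r$ near the boundary (by Lemma \ref{lem2.1}), this is bounded, via the weighted Hardy inequality of Lemma \ref{hardy-G}(2) applied with weight exponent $k=-\alpha<1$ to $g(z)=\int_{R-z}^R\mathcal{R}$ (which satisfies $g(0)=0$), by $C\int_{R-c}^R w^{-\alpha}r^{-4}\mathcal{R}^2\,dr$ — i.e. controlled by the good source term with no circularity. Substituting this back, both $\mathcal{E}^{1,1}_r$ and $\int_{R-c}^R w^\alpha r^4\zeta_r^2$ appear on the right-hand side only with coefficient $\precsim\theta_1^2$, so for $\theta_1$ sufficiently small they are absorbed into the left-hand side; the remaining terms are all $\precsim\mathcal{E}^0+\mathcal{E}^1$, and Lemma \ref{hard} together with \eqref{bts}, \eqref{Hardy-gw} disposes of the coordinate and vacuum singularities. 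This gives $\mathcal{E}^{1,1}\le C(\mathcal{E}^0+\mathcal{E}^1)$.
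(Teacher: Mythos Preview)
Your argument is correct, but it takes a genuinely different route from the paper's own proof. Both proofs agree on the easy observation $\mathcal{E}^{1,1}_t\le\tfrac{\alpha}{1+\alpha}\mathcal{E}^1$ and on writing the equation in divergence form $\gamma(w^{1+\alpha}r^4\zeta_r)_r=\mathcal{R}$, but they diverge in how they extract $\mathcal{E}^{1,1}_r$.

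The paper never isolates $\zeta_{rr}$. Instead it squares the full divergence $\int_0^R\frac{1}{w^\alpha r^4}\big|(w^{1+\alpha}r^4\zeta_r)_r\big|^2\,dr$, expands, and integrates the cross terms by parts; the outcome is the identity
\[
\int_0^R w^{2+\alpha}r^4\zeta_{rr}^2\,dr+4\int_0^R w^{2+\alpha}r^2\zeta_r^2\,dr=\gamma^{-2}\!\int_0^R\frac{\mathcal{R}^2}{w^\alpha r^4}\,dr+\{\text{terms }\precsim\mathcal{E}^0\}.
\]
The point is that the dangerous piece $(1+\alpha)w^\alpha w_r r^4\zeta_r$ cancels against itself after integration by parts, so the term $\int_{R-c}^R w^\alpha r^4\zeta_r^2$ you regard as the main obstacle simply never appears. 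This is an exact coercivity computation tailored to the operator, and it also underlies the higher-order versions in Lemma~\ref{lem4.9}.

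You instead pull $\zeta_{rr}$ out, pay the price of that boundary term, and then recover it by integrating the divergence equation from $R$ inward and invoking the weighted Hardy inequality (Lemma~\ref{hardy-G}(2) with $k=-\alpha$) on $g(z)=\int_{R-z}^R\mathcal{R}$. This is a valid and self-contained maneuver: it trades the structural cancellation in the paper's square expansion for an extra Hardy step. The advantage of the paper's route is economy (one integration by parts, no auxiliary boundary integral to close) and that it scales cleanly to higher derivatives; the advantage of yours is that it does not rely on spotting the specific sign structure in the expanded square, and the integration-from-the-boundary trick is a reusable device whenever a degenerate weight loses one power.
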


\begin{proof}
 In this case, because of \eqref{Ej0}, we only need to show that $ \int_0^R  w^{2+\alpha} r^4 |\zeta_{rr}|^2 dr$ is bounded by the temporal instant energy. We recall the equation  \eqref{zetaE} in the following form: 
\begin{equation}\label{zetaE2}
\begin{split}
 \gamma \big (w^{1+\alpha} r^4 \zeta_r\big)_r&= \frac{w^\alpha r^4\zeta_{tt}}{(1+\zeta)^2} +(3\gamma-4) w^\alpha r^4\Phi(r) \zeta- w^\alpha r^4\Phi(r) f(\zeta) \\&\quad+r^3(w^{1+\alpha} h(\zeta,\zeta_r r))_r 
\end{split}
\end{equation}
where $f(\zeta)=O(|\zeta|^2)$ and $h(\zeta,\zeta_r r)=O(|\zeta|^2)+O(|\zeta_rr|^2)$. For notational convenience, we have used $\gamma=(1+\alpha)/\alpha$. 
We will exploit the elliptic structure of the term in the left-hand-side of \eqref{zetaE2}. Square both sides of \eqref{zetaE2}, divide them by $w^\alpha r^4$ and integrate it over $(0,R)$ to get 
\begin{equation}
\begin{split}\label{49}
\underbrace{\int_0^R \frac{\gamma^2}{w^\alpha r^4} \left| \big (w^{1+\alpha} r^4 \zeta_r\big)_r \right |^2 dr}_{(\ast)}\precsim 
\underbrace{\int_0^R  \frac{w^\alpha r^4|\zeta_{tt}|^2}{(1+\zeta)^4} dr }_{(i)} +\underbrace{\int_0^Rw^\alpha r^4|\Phi(r) f(\zeta)|^2 dr}_{(ii)}  \\
 + \underbrace{\int_0^R w^\alpha r^4|\Phi(r) \zeta|^2 dr}_{(iii)} +\underbrace{\int_0^R\frac{1}{w^\alpha r^4} \left| r^3 \big (w^{1+\alpha} h(\zeta, \zeta_rr)\big)_r \right |^2 dr}_{(iv)}
\end{split}
\end{equation}
It is clear that 
\begin{equation}\label{i}
(i)\precsim \mathcal{E}^{1},\quad (ii)\precsim \theta_1^2\mathcal{E}^{0},\quad (iii)\precsim \mathcal{E}^{0}. 
\end{equation}
For $(iv)$, we first see that 
\[
r^3 \big (w^{1+\alpha} h(\zeta, \zeta_rr)\big)_r= w^{1+\alpha} r^4 \,\partial_2 h\, \zeta_{rr} +w^{1+\alpha} r^3(\partial_1h+\partial_2h) \zeta_r
+r^3(w^{1+\alpha})_r h 
\]
where  $\partial_1h$ and $\partial_2h$ mean the derivative of $h$ with respective to the first and second  argument respectively.  For instance, if $h(\zeta,\zeta_rr)=\zeta^2+(\zeta_r r)^2$, $\partial_1h=2\zeta$ and $\partial_2h=2\zeta_r r$. By using the notation $\Phi$ given \eqref{Phi}, we write $(w^{1+\alpha})_r=-rw^\alpha\Phi(r)$ and hence, we see that $(iv)$ is bounded by 
\[
{(iv)} \precsim \underbrace{\int_0^R w^{2+\alpha} r^4 |\partial_2h|^2|\zeta_{rr}|^2dr}_{\precsim \theta_1^2\mathcal{E}^{1,1}_r} + \underbrace{\int _0^Rw^{2+\alpha} r^2 |\partial_1h+\partial_2h|^2|\zeta_r|^2 dr}_{(iv)_1} 
+\underbrace{ \int_0^Rw^\alpha r^4|\Phi(r)|^2 h^2dr }_{(iv)_2}. 
\]
It is easy to see that the first term in the right-hand-side is bounded by $\theta_1^2\mathcal{E}^{1,1}_r$. For the second and third terms, we will employ Hardy inequalities; by using the Hardy inequality near the origin \eqref{hardy0} for $(iv)_1$ and the Hardy inequality near the boundary \eqref{Hardy-gw}  for $(iv)_2$ and noting that $ |\partial_1h+\partial_2h|^2\precsim\theta_1^2$ and $ |h|\precsim \theta_1^2$, we deduce  that 
\begin{equation}\label{ii}
(iv)\leq C \theta_1^2 \{\mathcal{E}^0+\mathcal{E}^{1,1}_r  \}. 
\end{equation}
We now turn our attention to the term $(\ast)$ in the left-hand-side of \eqref{49}. First notice that 
\[
\begin{split}
 (w^{1+\alpha} r^4 \zeta_r)_r&= w^{1+\alpha} r^4 \zeta_{rr} + 4w^{1+\alpha} r^3\zeta_r +(w^{1+\alpha})_r r^4\zeta_r\\
 &= w^{1+\alpha} r^4 \zeta_{rr} + 4w^{1+\alpha} r^3\zeta_r -w^\alpha r^5 \Phi(r)\zeta_r
\end{split}
\]
Thus the $(\ast)$ in \eqref{49} reads as 
\[
{(\ast)}= \gamma^2\int_0^R w^\alpha r^4 \left| w\zeta_{rr} +\frac{4w\zeta_r}{r} -r\Phi(r)\zeta_r\right|^2dr. 
\]
By expanding terms out, we see that 
\[
\begin{split}
\frac{(\ast)}{\gamma^2}&=\int_0^R w^{2+\alpha}r^4 |\zeta_{rr}|^2  dr+ 16\int_0^R w^{2+\alpha}r^2 |\zeta_r|^2 dr+\int _0^R w^{\alpha}r^6|\Phi(r)|^2|\zeta_r|^2  dr\\
&\;\;+\underbrace{8\int_0^R w^{2+ \alpha}r^3 \zeta_{rr}\zeta_r dr}_{(\ast)_1} \underbrace{ - 2\int_0^R w^{1+\alpha} r^5 \Phi(r) \zeta_{rr}\zeta_{r}dr}_{(\ast)_2}- 8 \int_0^R w^{1+\alpha} r^4\Phi(r)|\zeta_r|^2 dr
\end{split}
\]
For the $(\ast)_1$ and $(\ast)_2$, we integrate by parts to get 
\[
\begin{split}
(\ast)_1&=-4\int_0^R (w^{2+\alpha})_r r^3 |\zeta_r|^2 dr -12\int_0^R w^{2+\alpha} r^2|\zeta_r|^2 dr  \\
&=4\frac{2+\alpha}{1+\alpha}\int _0^Rw^{1+\alpha} r^4\Phi(r)|\zeta_r|^2 dr  -12\int_0^R w^{2+\alpha} r^2|\zeta_r|^2 dr \\
(\ast)_2&=-\int_0^R w^\alpha r^6|\Phi(r)|^2 |\zeta_r|^2 +5\int_0^R w^{1+\alpha}r^4\Phi(r) |\zeta_r|^2  dr +\int_0^R w^{1+\alpha}r^5\Phi'(r) |\zeta_r|^2  dr 
\end{split}
\]
Hence we obtain 
\[
\begin{split}
&\int_0^R w^{2+\alpha}r^4 |\zeta_{rr}|^2  dr+4\int _0^Rw^{2+\alpha}r^2 |\zeta_r|^2 dr=  \frac{(\ast)}{\gamma^2}
\\ &\quad  + \underbrace{3\int_0^R w^{1+\alpha} r^4\Phi(r)|\zeta_r|^2 dr-4\frac{2+\alpha}{1+\alpha}\int_0^R w^{1+\alpha} r^4\Phi(r)|\zeta_r|^2 dr -\int _0^Rw^{1+\alpha}r^5\Phi'(r) |\zeta_r|^2  dr }_{\precsim \mathcal{E}^0}
\end{split}
\]
It is clear that the last three terms in the right-hand-side are bounded by the zeroth-order energy $\mathcal{E}^0$. 
Now combining it with \eqref{49}, \eqref{i}, and \eqref{ii}, we get 
\[
\mathcal{E}^{1,1}_r\leq C_0\theta_1^2\mathcal{E}^{1,1}_r + C_1(\mathcal{E}^0+\mathcal{E}^1).  
\]
 For sufficiently small $\theta_1>0$, by absorbing the first term in the right-hand-side into the left-hand-side, we deduce that 
\[
\mathcal{E}^{1,1}_r\leq C_2(\mathcal{E}^0+\mathcal{E}^1) 
\]
for some constant $C_2>0$. This finishes the proof for the case of $j=1$ and $k=1$. 
\end{proof}

We now turn into the cases $j\geq 2$. As in the case of $j=1$, we will directly use the equation and take advantage of the elliptic estimates. What is subtle here is to capture the correct behavior of solutions in the normal direction $\partial_r$ near the boundary. 

\begin{lemma}[$\mathcal{E}^{j,k};1\leq k\leq j$, $2\leq j\leq \lceil\alpha\rceil +3$]\label{lem4.9} 
 There exists a constant  $C>0$ such that 
\[
\mathcal{E}^{j,k}\leq C\sum_{l=0}^j \mathcal{E}^l(t) . 
\]
\end{lemma}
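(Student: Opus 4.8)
The plan is to argue by double induction: on $j$ (the total number of derivatives) with an inner induction on $k$ (the number of spatial derivatives), reducing the estimate for $\mathcal{E}^{j,k}$ to $\mathcal{E}^{j,k-1}$ plus controllable error terms, and so eventually down to $\mathcal{E}^{j,0}=\mathcal{E}^j$ and lower-order temporal energies. The base case $j=1$, $k=1$ has already been established in the preceding lemma, and that proof is the template: apply $\partial_t^{j-k}\partial_r^{k-1}$ to the elliptic-form equation \eqref{zetaE2}, isolate the principal elliptic operator $\gamma(w^{1+\alpha}r^4\,\partial_t^{j-k}\partial_r^k\zeta)_r$-type term on the left, square, divide by the appropriate weight $w^{\alpha+k-1}r^4$ (so that the left side reproduces $\mathcal{E}^{j,k}_r$ modulo lower-order pieces, exactly as $(\ast)$ expanded into $\mathcal{E}^{1,1}_r$ plus $\mathcal{E}^0$-terms), and integrate over $(0,R)$.

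First I would fix the commutator structure. Writing $D=\partial_t^{j-k}\partial_r^{k-1}$ and applying $D$ to \eqref{zetaE2}, the top-order term is $\gamma\, D\big((w^{1+\alpha}r^4\zeta_r)_r\big)$, which after moving $D$ inside and extracting the highest spatial derivative equals $\gamma\,(w^{1+\alpha}r^4\,\partial_t^{j-k}\partial_r^k\zeta_r)_r$ plus terms in which some derivatives fall on the weight $w^{1+\alpha}r^4$; those are lower-order in $\partial_r$ and are absorbed using Lemma \ref{lem2.1} (the bounds on $w^{\frac{i-1}{2}}\partial_r^{i+1}w$ and $\partial_r^i w$) together with the inductive hypothesis for $\mathcal{E}^{j,k-1}$, $\mathcal{E}^{j,k-2}$, etc. On the right side, $D$ acting on $\tfrac{w^\alpha r^4\zeta_{tt}}{(1+\zeta)^2}$ produces the leading term $w^\alpha r^4\,\partial_t^{j-k+2}\partial_r^{k-1}\zeta$ divided by $(1+\zeta)^2$ plus products with $\partial_t$ and $\partial_r$ derivatives of $(1+\zeta)^{-2}$; the leading term after dividing by $w^{\alpha+k-1}r^4$ and integrating is exactly $\mathcal{E}^{j,k-1}_t = \mathcal{E}^{j,k-2}_r$ (for $k\ge 2$) or $\mathcal{E}^j$ (for $k=1$), which is covered by the induction, and the remaining products are estimated by the $L^\infty$ bounds of Lemma \ref{lem4.7} times lower-order energies, contributing factors of $\theta_1$. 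The potential terms $w^\alpha r^4\Phi(r)(\cdots)\zeta$ and $w^\alpha r^4\Phi(r)f(\zeta)$ are handled as in the base case since $\Phi$ and its derivatives are bounded (by \eqref{Phi} and Lemma \ref{lem2.1}), and $f(\zeta)=O(|\zeta|^2)$; these give $\mathcal{E}^0$- and $\theta_1^2\mathcal{E}$-type contributions. The genuinely nonlinear term $r^3(w^{1+\alpha}h(\zeta,\zeta_r r))_r$, after applying $D$, must be expanded via the Faà di Bruno / Leibniz rule; the dangerous pieces are those carrying the maximal number of spatial derivatives on a single factor, i.e. $w^{1+\alpha}r^4\,\partial_2 h\,\partial_t^{j-k}\partial_r^k\zeta_r$, and these produce a term of the form $\theta_1^2\,\mathcal{E}^{j,k}_r$ on the right, to be absorbed into the left for $\theta_1$ small, precisely as $\int w^{2+\alpha}r^4|\partial_2 h|^2|\zeta_{rr}|^2\precsim\theta_1^2\mathcal{E}^{1,1}_r$ was absorbed before.

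The two structural tools that make the absorption work are, first, the algebraic identity expanding $(w^{1+\alpha}r^4\,\partial_t^{j-k}\partial_r^{k-1}\zeta_r)_r$ into $w^{1+\alpha}r^4(\partial_r^k\cdots)+$ terms with weights $w^{2+\alpha}r^2$ and $w^\alpha r^3\Phi$, together with the integration-by-parts cancellations that turn the cross terms $\int w^{2+\alpha}r^3(\partial_r^k\zeta)(\partial_r^{k-1}\zeta_r)$ and $\int w^{1+\alpha}r^5\Phi(\partial_r^k\zeta)(\partial_r^{k-1}\zeta_r)$ into lower-weight terms bounded by $\mathcal{E}^{j,k-1}$ and $\mathcal{E}^0$ — exactly the role of $(\ast)_1,(\ast)_2$ in the base case; and second, the localized Hardy inequalities, \eqref{hardy0} near the origin for the coordinate-singular terms (those with weight $w^{\cdot}r^2$ rather than $r^4$) and \eqref{Hardy-gw} near the boundary for the terms weighted by $w^{\alpha+k-1}$ rather than $w^{\alpha+k+1}$, to bring every error term back into the standard energies. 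Finally one runs the inner induction over $k=1,\dots,j$ and then the outer induction over $j$, at each stage choosing $\theta_1$ small enough (uniformly, finitely many absorptions) to subtract the $\theta_1^2\mathcal{E}^{j,k}_r$ contributions; combining with $\mathcal{E}^{j,k}_t=\mathcal{E}^{j,k-1}_r$ gives $\mathcal{E}^{j,k}\le C\sum_{l=0}^j\mathcal{E}^l$, which chained over all $(j,k)$ with $\sum_{k}$ yields Proposition \ref{equiv}.

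The main obstacle I anticipate is bookkeeping the commutator terms so that every derivative that falls on the weight $w^{1+\alpha}r^4$ (or on $(1+\zeta)^{-2}$, or inside $h$) is genuinely lower-order in the energy hierarchy: one must check that whenever $i$ spatial derivatives hit $w$, the surviving factor $\partial_r^i w$ (for $i\le\alpha+2$) or $w^{\frac{i-1}{2}}\partial_r^{i+1}w$ (for $i\le 2\alpha+1$) is bounded by Lemma \ref{lem2.1}, so that the weight deficit is exactly compensated and the term lands in $\mathcal{E}^{j,k-1}$ or lower — this is where the precise powers in the weighted energies \eqref{Ejk} and the regularity thresholds of $w$ are used in an essential, nonnegotiable way, and where the restriction $j\le\lceil\alpha\rceil+3$ enters (so that no weight power exceeding the admissible range for $w$ is ever required).
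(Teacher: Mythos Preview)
Your inductive plan and the use of Hardy inequalities, $L^\infty$ bounds, and absorption of $\theta_1^2\mathcal{E}^{j,k}_r$ are all correct in spirit and match the paper for the $k=1$ step. However, the key step for $k\ge 2$ --- applying $D=\partial_t^{j-k}\partial_r^{k-1}$ directly to \eqref{zetaE2}, writing $D\big((w^{1+\alpha}r^4\zeta_r)_r\big)=(w^{1+\alpha}r^4\,\partial_t^{j-k}\partial_r^k\zeta_r)_r+\text{commutators}$, and then squaring and dividing by $w^{\alpha+k-1}r^4$ --- does \emph{not} work, and this is precisely the subtlety the paper flags (``What is subtle here is to capture the correct behavior of solutions in the normal direction $\partial_r$ near the boundary'').

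Take $j=k=2$. Your commutator contains $(w^{1+\alpha}r^4)_r\zeta_{rr}\sim (1+\alpha)w^\alpha w_r r^4\zeta_{rr}$; after squaring and dividing by $w^{\alpha+1}r^4$ this produces $\int_0^R w^{\alpha-1}|w_r|^2r^4|\zeta_{rr}|^2\,dr$, which is \emph{not} controlled by $\mathcal{E}^{1,1}_r=\int w^{\alpha+2}r^4|\zeta_{rr}|^2$ (a weight deficit of $w^3$ near the boundary, unrecoverable by Hardy). Worse, your principal term itself yields $\int w^{\alpha+1}r^4|\zeta_{rrr}|^2$, not $\mathcal{E}^{2,2}_r=\int w^{\alpha+3}r^4|\zeta_{rrr}|^2$. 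The point is that the weight in the elliptic operator must \emph{shift} with $k$: the correct self-adjoint structure at level $k$ is $-\tfrac{1}{w^{k+\alpha}r^4}(w^{k+1+\alpha}r^4\,\partial_r^{k+1}\cdot)_r$, not a fixed $w^{1+\alpha}$.

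The paper achieves this shift by a different algorithm: first divide \eqref{zetaE2} by $r^3w^\alpha$ so the principal part reads $\gamma\{wr\zeta_{rr}+(1+\alpha)w_r r\zeta_r+4w\zeta_r\}$; then apply $\partial_r$, which promotes the coefficient of $w_r$ from $(1+\alpha)$ to $(2+\alpha)$ (this is exactly the mechanism in \eqref{3.16}--\eqref{3.17} from the growing-mode regularity analysis, cf.\ \cite{JM10}); then square and multiply by $w^{1+\alpha}r^2$. Now the cross term $2(2+\alpha)\int w^{2+\alpha}w_r r^4\zeta_{rrr}\zeta_{rr}$, after integration by parts, produces $-(2+\alpha)^2\int w^{1+\alpha}|w_r|^2r^4|\zeta_{rr}|^2$, which \emph{exactly cancels} the square of the middle term --- and the surviving remainder is genuinely bounded by $\mathcal{E}^{1,1}_r$. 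This divide--differentiate--reweight step, repeated inductively, is the missing idea in your proposal; once you insert it, the rest of your outline (Hardy localizations, nonlinear absorption, induction) goes through as written.
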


\begin{proof}  Notice that because of \eqref{Ej0}, it suffices to show that each spatial energy term $\mathcal{E}_r^{j,k}$ for $1\leq k\leq j$ is bounded by $\mathcal{E}(t)$. We will present the detail for $j=2$; other cases follow by the induction on $j,k$. When $k=1$, the spatial energy term $\mathcal{E}^{2,1}_r$ contains one temporal derivative and two spatial derivatives. The time derivative of the equation \eqref{zetaE2} is a good place to start. But then, the time derivative does not affect the weights at all since $w$ and $r$ do not change in time. 
Therefore,  following the same procedure for $\mathcal{E}^{1,1}_r$ in the previous lemma, we can deduce that 
\[
\mathcal{E}^{2,1}_r\leq C(\mathcal{E}^0+\mathcal{E}^1+\mathcal{E}^2)
\]
for some constant $C>0$.

To deal with $\mathcal{E}^{2,2}_r$ which contains three spatial derivatives, we will first derive the equation for $\zeta_{rrr}$ from \eqref{zetaE2}. Here is the algorithm to do so: first divide both sides of \eqref{zetaE2} by $r^3w^\alpha$: 
\begin{equation*}
\begin{split}
 \gamma \big \{w r\zeta_{rr} +(1+\alpha) w_r r\zeta_r+4w\zeta_r\big\}
& = \frac{ r \zeta_{tt}}{(1+\zeta)^2} +(3\gamma-4) r\Phi(r) \zeta- r\Phi(r) f(\zeta) \\
&\quad +\big\{ w r\partial_2h\zeta_{rr} +w(\partial_1h+\partial_2h)\zeta_r+(1+\alpha) w_r h \big\}
\end{split}
\end{equation*}
Then we take $\partial_r$ of both sides of the above equation and move the terms involving $\zeta_r$ into the right-hand-side to get 
\begin{equation}\label{zetaE3}
\begin{split}
 \gamma \big \{
w r\zeta_{rrr} +(2+\alpha) w_r r\zeta_{rr}+5w\zeta_{rr}\big\} = -\gamma\big\{(5+\alpha)w_r \zeta_r +(1+\alpha)w_{rr}r\zeta_r\big\}
\\ + \frac{ r \zeta_{ttr}}{(1+\zeta)^2}+ \frac{ \zeta_{tt}}{(1+\zeta)^2}- \frac{ 2r\zeta_{tt}\zeta_r}{(1+\zeta)^3} +(3\gamma-4) \big(r\Phi(r) \zeta\big)_r- \big(r\Phi(r) f(\zeta)\big)_r \\
 \quad+ \big( w r\partial_2h\zeta_{rr} +w(\partial_1h+\partial_2h)\zeta_r+(1+\alpha) w_r h \big)_r
\end{split}
\end{equation}
As in the previous lemma, we square both sides of \eqref{zetaE3}, multiply by $w^{1+\alpha} r^2$ -- here the multiplier $w^{1+\alpha}$ has been  chosen inspired by the analysis carried out in \cite{JM10} -- and integrate it over $(0,R)$ to get 
\begin{equation}
\begin{split}\label{56}
&\underbrace{\int_0^R {w^{1+\alpha} r^2} \big| w r\zeta_{rrr} +(2+\alpha) w_r r\zeta_{rr}+5w\zeta_{rr}  \big |^2 dr}_{(\ast)} \\
&\quad\precsim \underbrace{\int_0^Rw^{1+\alpha} r^2 (|w_r|^2+ |rw_{rr}|^2) |\zeta_r |^2 dr}_{(i)}\\
&\quad\quad + \underbrace{\int_0^R  \frac{w^{1+\alpha} r^4|\zeta_{ttr}|^2}{(1+\zeta)^4} dr }_{\precsim \,\mathcal{E}^{2,0}_r} +\underbrace{\int_0^R  \frac{w^{1+\alpha} r^2|\zeta_{tt}|^2}{(1+\zeta)^4} dr + \int_0^R  \frac{w^{1+\alpha} r^2|\zeta_{tt}r\zeta_r|^2}{(1+\zeta)^6} dr }_{(ii)} \\
&\quad\quad+\underbrace{\int_0^Rw^{1+\alpha} r^2 |\big(r\Phi(r) \zeta\big)_r |^2dr +\int_0^Rw^{1+\alpha} r^2 |\big(r\Phi(r) f(\zeta)\big)_r|^2 dr}_{(iii)}  
\\
&\quad\quad +\underbrace{\int_0^Rw^{1+\alpha} r^2  \left| \big( w r\partial_2h\zeta_{rr} +w(\partial_1h+\partial_2h)\zeta_r+(1+\alpha) w_r h \big)_r \right |^2 dr}_{(iv)}
\end{split}
\end{equation}
Note that $(i)$, $(ii)$, $(iii)$ contain stronger weights near the origin and for those terms,  we can use the localized Hardy inequality \eqref{hardy0} to obtain 
\begin{equation}\label{i,ii,iii}
(i)\precsim \mathcal{E}^0 + \mathcal{E}^{1}, \quad (ii)\precsim \mathcal{E}^{1}+  \mathcal{E}^{2}, \quad 
(iii)\precsim \mathcal{E}^0+\mathcal{E}^1.
\end{equation}
The last term in the right-hand-side of \eqref{56}: $(iv)$ includes $\zeta_{rrr},\,\zeta_{rr},\,\zeta_r$ with different weights and it can be treated in a similar way as done for  $(iv)$ of \eqref{49} in the previous lemma. We expand it out and apply the Hardy inequalities both near the origin  \eqref{hardy0} and near the boundary  \eqref{Hardy-gw} to deduce that 
\begin{equation}\label{iv}
(iv)\precsim \theta_1^2\{ \mathcal{E}^0+\mathcal{E}^{1,1}_r+ \mathcal{E}^{2,2}_r \}. 
\end{equation}

What follows now is the elliptic estimate for the $(\ast)$ in the left-hand-side of \eqref{56}, which will give rise to the term $\mathcal{E}^{2,2}_r$. 
\[
\begin{split}
(\ast)&=\int_0^R w^{3+\alpha}r^4 |\zeta_{rrr}|^2  dr +(2+\alpha)^2\int_0^R  w^{1+\alpha}r^4|w_r|^2|\zeta_{rr}|^2  dr+ 25\int_0^R  w^{3+\alpha}r^2 |\zeta_{rr}|^2 dr\\
&\;\;\;+\underbrace{2(2+\alpha)\int_0^R  w^{2+ \alpha}r^4 w_r\zeta_{rrr}\zeta_{rr} dr}_{(\ast)_1} +\underbrace{  10\int_0^R  w^{3+\alpha} r^3 \zeta_{rrr}\zeta_{rr}dr}_{(\ast)_2}\\
&\;\;\;+{10(2+\alpha) \int_0^R  w^{2+\alpha} r^3w_r|\zeta_{rr}|^2 dr}
\end{split}
\]
For the $(\ast)_1$ and $(\ast)_2$, we integrate by parts to get 
\[
\begin{split}
\frac{(\ast)_1}{2+\alpha}&=-\int_0^R  (w^{2+\alpha})_r r^4w_r |\zeta_{rr}|^2 dr -4\int_0^R  w^{2+\alpha} r^3w_r|\zeta_{rr}|^2 dr -\int_0^R  w^{2+\alpha} r^4 w_{rr}|\zeta_{rr}|^2 dr \\
&= -(2+\alpha)\int _0^R w^{1+\alpha} r^4|w_r|^2|\zeta_{rr}|^2 dr-4\int _0^R w^{2+\alpha} r^3w_r|\zeta_{rr}|^2 dr \\
&\quad-\int_0^R  w^{2+\alpha} r^4 w_{rr}|\zeta_{rr}|^2 dr \\
(\ast)_2&=-5(3+\alpha)\int_0^R  w^{2+\alpha} w_r r^3|\zeta_{rr}|^2 dr - 15 \int_0^R  w^{3+\alpha} r^2 |\zeta_{rr}|^2 dr 
\end{split}
\]
Thus we obtain 
\[
\begin{split}
&\int_0^R  w^{3+\alpha}r^4 |\zeta_{rrr}|^2  dr + 10\int_0^R  w^{3+\alpha}r^2 |\zeta_{rr}|^2 dr \\
& \quad=(\ast)-(\alpha-3)\int_0^R  w^{2+\alpha} w_r r^3|\zeta_{rr}|^2 dr +(2+\alpha)\int_0^R  w^{2+\alpha} r^4 w_{rr}|\zeta_{rr}|^2 dr 
\end{split}
\]
By noting $w_r=-r\Phi(r)/(1+\alpha)$, we see that the last two terms are bounded by $\mathcal{E}^{1,1}_r$.  By combining with  \eqref{56}, \eqref{i,ii,iii} and \eqref{iv}, we deduce that 
\[
\mathcal{E}^{2,2}_r\leq C(\mathcal{E}^0+\mathcal{E}^1+\mathcal{E}^2+\mathcal{E}^{1,1}_r) 
\]
for some constant $C>0$. Since $\mathcal{E}^{1,1}_r\precsim \mathcal{E}^0+\mathcal{E}^1$ by the previous lemma, the desired result  follows and this finishes the proof of the case $j=2$. Other cases can be done inductively: take $\partial_r$ derivatives of the equation \eqref{zetaE3}, square it, multiply  by appropriate weights depending on the number of spatial derivatives, and exploit the Hardy inequalities and the elliptic estimates. The procedure and the estimates are similar to the previous cases and we omit the details.  
\end{proof}



\section{Nonlinear weighted energy estimates}\label{5}

In this section, we develop the nonlinear energy estimates for sufficiently small perturbed solutions $(\zeta_t,\zeta)$ to the Euler-Poisson system \eqref{zeta} or  \eqref{zetaE}. Because of a subtle nonlinear structure of the pressure gradient term, a splitting to a linear part and a nonlinear part as done in \eqref{zetaE} may destroy the important structure during the estimates and it may lose a necessary cancelation property unless one would be extremely cautious. Here in order to obtain the close energy inequalities, we will work with the original perturbed form \eqref{zeta} and introduce the nonlinear high-order energy norms in \eqref{varE}.  We first recast \eqref{zeta} in the following form: 
\begin{equation}\label{Ezeta}
\begin{split}
\frac{w^\alpha r^4 \zeta_{tt}}{(1+\zeta)^2} + r^3\partial_r\left(w^{1+\alpha} \left\{ \Big[
 1+ \frac{1}{r^2} \Big( r^3 (\zeta+\zeta^2+\frac{\zeta^3}{3}) \Big)_r
  \Big]^{-\frac{1+\alpha}{\alpha}}   -1\right\} \right) &\\
  +  \frac{ 1- (1+\zeta)^4 }{(1+\zeta)^4 } w^\alpha r^4 \Phi(r)
&=0
\end{split}
\end{equation}
where we have used the Lane-Emden equation \eqref{EQ} with the notation introduced in \eqref{Phi} and the following identity: 
\[
\begin{split}
&(1+\zeta)^2(1+\zeta+\zeta_rr) = (1+2\zeta+\zeta^2)(1+\zeta+\zeta_r r)\\
&\quad\quad= 1+ 3\zeta + \zeta_r r+   3\zeta^2 + 2\zeta \zeta_r r + \zeta^3+\zeta^2\zeta_r r\\
&\quad\quad= 1+ \frac{1}{r^2} \Big( r^3 (\zeta+\zeta^2+\frac{\zeta^3}{3}) \Big)_r
\end{split}
\]

We are now ready to perform the energy estimates. Throughout the section, the smallness of the solution \eqref{small} is assumed. 
We start with $\mathcal{E}^0$.

\begin{lemma}[$\mathcal{E}^0$] For any fixed small $\kappa>0$
\begin{equation}\label{E_0}
\frac12\frac{d}{dt}\mathcal{E}^0 \leq \frac{d}{dt}K_0+(C\theta_1+\kappa) \mathcal{E}^0
+C_\kappa \int_0^R w^\alpha r^4 \zeta^2 dr
\end{equation}
where $|K_0|\leq C\theta_1 \mathcal{E}^0$. 
\end{lemma}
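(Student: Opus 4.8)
\medskip
\noindent\textbf{Proof proposal.} The plan is to test the equation \eqref{Ezeta} against the \emph{modified velocity} $(1+\zeta)^2\zeta_t$ rather than against $\zeta_t$ itself. This is the natural multiplier because $(1+\zeta)^2\zeta_t=\partial_t\big(\zeta+\zeta^2+\tfrac{\zeta^3}{3}\big)$, which (i) exactly cancels the factor $(1+\zeta)^{-2}$ in front of $\zeta_{tt}$, so the inertial term becomes $\tfrac12\tfrac{d}{dt}\int_0^R w^\alpha r^4\zeta_t^2\,dr$, and (ii) turns the degenerate pressure term into an \emph{exact} time derivative. Indeed, writing $\mathcal{J}\equiv(1+\zeta)^2(1+\zeta+\zeta_rr)=1+\tfrac1{r^2}\big(r^3(\zeta+\zeta^2+\tfrac{\zeta^3}{3})\big)_r$, one computes $\partial_r\big[(1+\zeta)^2\zeta_t\,r^3\big]=\partial_t\partial_r\big[r^3(\zeta+\zeta^2+\tfrac{\zeta^3}{3})\big]=r^2\partial_t\mathcal{J}$, so after an integration by parts (the boundary contributions vanish because $r^3|_{r=0}=0$ and $w(R)=0$) the pressure term yields $-\tfrac{d}{dt}\int_0^R r^2w^{1+\alpha}P(\mathcal{J})\,dr$ with $P(\mathcal{J})\equiv\int_1^{\mathcal{J}}\big(s^{-\frac{1+\alpha}{\alpha}}-1\big)\,ds$; likewise the potential term gives $\tfrac{d}{dt}\int_0^R\Theta(\zeta)\,w^\alpha r^4\Phi(r)\,dr$ with $\Theta(\zeta)\equiv\tfrac43-(1+\zeta)^{-1}-\tfrac13(1+\zeta)^3$. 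Summed, these reproduce conservation of the (not sign-definite) physical energy \eqref{energy}, so the real work is to extract a coercive piece out of it.

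Next I would Taylor expand around the equilibrium: $P(\mathcal J)=-\tfrac{1+\alpha}{2\alpha}(\mathcal J-1)^2+O\big((\mathcal J-1)^3\big)$, $\mathcal J-1=3\zeta+\zeta_rr+O(|\zeta|^2+|\zeta||\zeta_rr|)$, and $\Theta(\zeta)=-2\zeta^2+O(|\zeta|^3)$. The quadratic part of $-\int r^2w^{1+\alpha}P(\mathcal J)$ is then $\tfrac{1+\alpha}{2\alpha}\int_0^R r^2w^{1+\alpha}(3\zeta+\zeta_rr)^2\,dr$; integrating the cross term $\int r^3w^{1+\alpha}\zeta\zeta_r\,dr$ by parts and using $(w^{1+\alpha})_r=-rw^\alpha\Phi$ from \eqref{EQ}--\eqref{Phi}, the $\int r^2w^{1+\alpha}\zeta^2$ contributions cancel and this equals $\tfrac{3(1+\alpha)}{2\alpha}\int_0^R r^4w^\alpha\Phi\zeta^2\,dr+\tfrac{1+\alpha}{2\alpha}\int_0^R r^4w^{1+\alpha}\zeta_r^2\,dr$. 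Combining with the inertial term, the $-2\int r^4w^\alpha\Phi\zeta^2$ coming from $\Theta$, and the identity $\tfrac{3(1+\alpha)}{2\alpha}-2=\tfrac{3-\alpha}{2\alpha}$, the conservation law takes the form
\[
\tfrac12\tfrac{d}{dt}\Big(\|\zeta_t\|_{X^\alpha}^2+\|\zeta\|_{Y^\alpha}^2\Big)=-\tfrac{3-\alpha}{2\alpha}\,\tfrac{d}{dt}\!\int_0^R r^4w^\alpha\Phi\zeta^2\,dr-\tfrac{d}{dt}K_0 ,
\]
where $K_0$ is the collection of all cubic-and-higher remainders. I would then add $\tfrac12\tfrac{d}{dt}\|\zeta\|_{X^\alpha}^2=\int_0^R w^\alpha r^4\zeta\zeta_t\,dr$ to both sides to complete $\tfrac12\tfrac{d}{dt}\mathcal E^0$ on the left, and estimate the right-hand side: using that $\Phi$ is bounded (Lemma \ref{lem2.1}), both $\int_0^R w^\alpha r^4\zeta\zeta_t\,dr$ and $\tfrac{3-\alpha}{\alpha}\int_0^R r^4w^\alpha\Phi\zeta\zeta_t\,dr$ are $\le\kappa\mathcal E^0+C_\kappa\int_0^R w^\alpha r^4\zeta^2\,dr$ by Young's inequality.

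It remains to check $|K_0|\le C\theta_1\mathcal E^0$. Each term of $K_0$ has the form $\int r^2w^{1+\alpha}\cdot(\text{cubic or higher in }\zeta,\zeta_rr)$ or $\int w^\alpha r^4\Phi\cdot O(|\zeta|^3)$. Under the smallness \eqref{small}, which furnishes $\|\zeta\|_\infty,\|\zeta_r\|_\infty\le\theta_1$, one pulls a single factor of size $\theta_1$ out in $L^\infty$, leaving quantities bounded by $C\int_0^R r^2w^{1+\alpha}(\zeta^2+r^2\zeta_r^2)\,dr+C\int_0^R w^\alpha r^4\zeta^2\,dr$. The term $\int r^4w^{1+\alpha}\zeta_r^2$ is directly $\precsim\mathcal E^0$; for $\int r^2w^{1+\alpha}\zeta^2$ one notes $r^2w^{1+\alpha}\precsim r^4w^\alpha$ away from the origin, while near the origin the localized Hardy inequality \eqref{hardy0} gives $\int_0^c r^2\zeta^2\,dr\precsim\int_0^{2c}r^4\zeta_r^2\,dr+\int_c^{2c}r^4\zeta^2\,dr\precsim\mathcal E^0$ (with $w$ bounded above and below on $(0,2c)$). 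Hence $|K_0|\le C\theta_1\mathcal E^0$, and collecting the estimates gives \eqref{E_0}.

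The main obstacle is conceptual, namely choosing the multiplier $(1+\zeta)^2\zeta_t$: with the naive choice $\zeta_t$ the pressure term does not collapse to a perfect time derivative, and the resulting remainder carries $\zeta_{tr}$ coupled to quadratic nonlinearities, which cannot be absorbed into $\mathcal E^0$ alone. Once the perfect-derivative structure is in place, the only remaining difficulty is the weight bookkeeping---reconciling the coordinate weight $r^2w^{1+\alpha}$ thrown off by the spherical integration by parts with the energy weight $r^4w^\alpha$---which is precisely what the localized Hardy inequality \eqref{hardy0} is designed to absorb.
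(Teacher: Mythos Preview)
Your proof is correct and follows the same core strategy as the paper: both multiply \eqref{Ezeta} by $(1+\zeta)^2\zeta_t$ and exploit the identity $(1+\zeta)^2\zeta_t=\partial_t(\zeta+\zeta^2+\tfrac{\zeta^3}{3})$ to turn the degenerate pressure term into a perfect time derivative, then Taylor-expand to isolate the quadratic energy $\tfrac{1+\alpha}{2\alpha}\int_0^R r^2w^{1+\alpha}(3\zeta+\zeta_r r)^2\,dr$ with an $O(\theta_1\mathcal{E}^0)$ remainder.

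There are two small organizational differences worth noting. First, the paper estimates the potential term $(III)$ directly by Cauchy--Schwarz, whereas you also write it as a perfect time derivative $\tfrac{d}{dt}\int\Theta(\zeta)w^\alpha r^4\Phi$ and Taylor-expand; your route combines the quadratic potential piece $-2\int r^4w^\alpha\Phi\zeta^2$ with the quadratic pressure piece into the single coefficient $\tfrac{3-\alpha}{2\alpha}$, which is tidy but not essential. Second, in the integration by parts of $\int r^2w^{1+\alpha}(3\zeta+\zeta_r r)^2$ you correctly obtain $6\int r^3w^{1+\alpha}\zeta\zeta_r=-9\int r^2w^{1+\alpha}\zeta^2-3\int r^3(w^{1+\alpha})_r\zeta^2$, so that the $\int r^2w^{1+\alpha}\zeta^2$ contributions cancel completely (the paper's displayed $-6$ is a slip; with the correct $-9$ its argument also closes). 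You are also explicit about adding $\tfrac12\tfrac{d}{dt}\|\zeta\|_{X^\alpha}^2$ to complete $\mathcal{E}^0$ on the left, a step the paper leaves implicit. None of these differences is substantive: the proof stands as written.
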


\begin{proof} We begin by multiplying \eqref{Ezeta} by $(1+\zeta)^2\zeta_t $ and integrating over $(0,R)$: 
\[
\begin{split}
\int_0^R  {w^\alpha r^4\zeta_{tt}\zeta_t}  dr + \int_0^R (1+\zeta)^2 \zeta_t \,
 r^3\partial_r\left(w^{1+\alpha} \Big\{ \Big[
 1+ \frac{1}{r^2} \Big( r^3 (\zeta+\zeta^2+\tfrac{\zeta^3}{3}) \Big)_r
  \Big]^{-\frac{1+\alpha}{\alpha}}   -1\Big\} \right) dr  \\
  + \int_0^R  \frac{ [1- (1+\zeta)^4]\zeta_t }{(1+\zeta)^2 } w^\alpha r^4 \Phi(r) dr =0
\end{split}
\]
We denote the left-hand-side by $(I)+(II)+(III)$. We will estimate it term by term. Note that the first term $(I)$ forms a perfect time derivative:  
\[
(I)=\frac{1}{2}\frac{d}{dt}\int_0^R w^\alpha r^4| \zeta_t |^2 dr . 
\]
For $(II)$, we first integrate by parts
\[
\begin{split}
(II)& = -  \int _0^R \big(r^3(1+\zeta)^2 \zeta_t \big)_r \,w^{1+\alpha} \Big\{ \Big[
 1+ \frac{1}{r^2} \Big( r^3 (\zeta+\zeta^2+\tfrac{\zeta^3}{3}) \Big)_r
  \Big]^{-\frac{1+\alpha}{\alpha}}   -1\Big\}  dr 
 \end{split}
\]
and use the identity $(1+\zeta)^2 \zeta_t=(\zeta+\zeta^2+\frac{\zeta^3}{3})_t$ to see that it also forms a perfect time derivative 
 \[
\begin{split}
(II) & = -\int _0^R \big(r^3(\zeta+\zeta^2+\frac{\zeta^3}{3})_t \big)_r \,w^{1+\alpha} \Big\{ \Big[
 1+ \frac{1}{r^2} \Big( r^3 (\zeta+\zeta^2+\tfrac{\zeta^3}{3}) \Big)_r
  \Big]^{-\frac{1+\alpha}{\alpha}}   -1\Big\}  dr \\
  &= \frac{d}{dt}\int _0^R w^{1+\alpha}  \Big\{  \alpha r^2  \Big( \Big[
 1+ \frac{1}{r^2} \Big( r^3 (\zeta+\zeta^2+\tfrac{\zeta^3}{3}) \Big)_r
  \Big]^{-\frac{1}{\alpha}} -1 \Big)+  \Big( r^3 (\zeta+\zeta^2+\tfrac{\zeta^3}{3}) \Big)_r  \Big\}   dr
\end{split}
\]
By Taylor's theorem, we see that the inside of $\{\cdot\}$ is non-negative and it can be written as 
\[
\begin{split}
\frac12(1+\frac{1}{\alpha}) \frac{1}{r^2} \left|  \Big( r^3 (\zeta+\zeta^2+\tfrac{\zeta^3}{3}) \Big)_r \right|^2 + \mathfrak{h}=\frac12(1+\frac{1}{\alpha}) \big|3r\zeta+r^2 \zeta_r \big|^2 + \mathfrak{h}
\end{split}
\]
where $\mathfrak{h}$ together with the weight $w^{1+\alpha}$ is bounded by $\theta_1\mathcal{E}^0$. 
Thus we may write 
\[
(II) =\frac12(1+\frac{1}{\alpha}) \frac{d}{dt} \underbrace{\int_0^Rw^{1+\alpha}   \big|3r\zeta+r^2 \zeta_r \big|^2dr}_{(\ast)}  - \frac{d}{dt}K_0 
\]
where $K_0\leq C\theta_1\mathcal{E}^0$.  The term $(\ast)$ gives rises to the other part of $\mathcal{E}^0$. To see it, we expand it out: 
\[
\begin{split}
\int_0^R w^{1+\alpha}   \big|3r\zeta+r^2 \zeta_r \big|^2 dr = 9\int_0^R w^{1+\alpha} r^2  |\zeta|^2 dr + \int_0^R w^{1+\alpha}  r^4 | \zeta_r |^2 dr +6 \int _0^R w^{1+\alpha}  r^3 \zeta \zeta_r dr 
\end{split}
\]
For the last term in the right-hand-side, by integrating by parts we obtain
\[
6 \int _0^Rw^{1+\alpha}  r^3 \zeta \zeta_r dr = - 6\int_0^R w^{1+\alpha} r^2|\zeta|^2 dr - 3\int _0^R(w^{1+\alpha})_r r^3 |\zeta|^2 dr
\]
and in turn
\[
\int_0^R w^{1+\alpha}   \big|3r\zeta+r^2 \zeta_r \big|^2 dr= 
 \int_0^R w^{1+\alpha}  r^4 | \zeta_r |^2 dr  + 3\int_0^R w^{1+\alpha} r^2  |\zeta|^2 dr - 3\int_0^R (w^{1+\alpha})_r r^3 |\zeta|^2 dr. 
\]
Notice that all three terms are positive. 

Lastly, we use the Cauchy-Schwartz inequality to control $(III)$. Since $\Phi(r)$ is bounded and by using \eqref{small}, it is easy to deduce that 
\[
|(III)|\leq {\kappa} \int_0^R w^\alpha r^4 |\zeta_t|^2 dr + C_\kappa \int_0^R w^\alpha r^4 \zeta^2 dr . 
\]
 This establishes \eqref{E_0}. 
\end{proof}

For higher-order energy estimates, we will look at the weighted quantity instead of dealing with $\zeta_t$ and $\zeta_r$ directly: this way, we will have more effective way of getting the estimates. In fact, notice that the nonlinearity in \eqref{Ezeta} is directly related to $J$ defined in \eqref{J}: 
\[
1+ \frac{1}{r^2} \Big( r^3 (\zeta+\zeta^2+\frac{\zeta^3}{3}) \Big)_r =(1+\zeta)^2(1+\zeta+\zeta_rr) = \xi^2(\xi+\xi_r r)=J
\]
and hence, the equation  \eqref{Ezeta} can be written as 
\begin{equation}\label{Jzeta}
\frac{w^\alpha r^4 \zeta_{tt}}{(1+\zeta)^2} + r^3\partial_r\big( w^{1+\alpha}\{J^{-\frac{1+\alpha}{\alpha}} -1\}  \big) +\frac{ 1- (1+\zeta)^4 }{(1+\zeta)^4 } w^\alpha r^4 \Phi(r)=0 . 
\end{equation}
Moreover, we observe that 
\[
J_t=\frac{1}{r^2} \big[r^3 (1+\zeta)^2\zeta_t \big]_r\,. 
\]
This motivates us to consider the following variable: 
\begin{equation}\label{varphi}
\varphi\equiv (1+\zeta)^2\zeta_t \; \;\text{   so   that }\; \;J_t=\frac{1}{r^2} [r^3\varphi]_r \,.
\end{equation}
The time derivative of \eqref{Ezeta} reads as 
\begin{equation}\label{zetaEt}
\begin{split}
&\frac{w^\alpha r^4\zeta_{ttt}}{(1+\zeta)^2}-2\frac{w^\alpha r^4\zeta_{tt}\zeta_t}{(1+\zeta)^3} - 4\frac{ w^\alpha r^4 \Phi(r) \zeta_t }{(1+\zeta)^5}\\ 
&-\tfrac{1+\alpha}{\alpha}\, r^3\partial_r\left(w^{1+\alpha}  \big[
 1+ \tfrac{1}{r^2} ( r^3 (\zeta+\zeta^2+\tfrac{\zeta^3}{3}))_r
  \big]^{-\frac{1+2\alpha}{\alpha}}  \cdot  \tfrac{1}{r^2} \big( r^3(1+\zeta)^2 \zeta_t \big)_r \right)=0 \\
\end{split}
\end{equation}
By using the definition of $\varphi$, we also obtain 
\[
\varphi_t=(1+\zeta)^2\zeta_{tt}+2(1+\zeta)\zeta_t^2 \quad\text{and}\quad \varphi_{tt}=(1+\zeta)^2\zeta_{ttt}+4(1+\zeta)\zeta_t\zeta_{tt} +2\zeta_t^3
\]
and hence by a straightforward computation, one can see that \eqref{zetaEt} reads in terms of $\varphi$ as follows: 
\begin{equation}\label{Jtzeta}
\begin{split}
\frac{w^\alpha r^4 \varphi_{tt}}{(1+\zeta)^4} -\frac{6w^\alpha r^4\varphi \varphi_{t}}{(1+\zeta)^7}-\frac{14 w^\alpha r^4 \varphi^3}{(1+\zeta)^{10}} -\frac{4  w^\alpha r^4 \Phi(r) \varphi }{(1+\zeta)^7}&\\
 -\frac{1+\alpha}{\alpha} r^3\partial_r\left( w^{1+\alpha}J^{-\frac{1+2\alpha}{\alpha}} \frac{1}{r^2} [r^3\varphi]_r  \right)&=0 . 
\end{split}
\end{equation}
The advantage of the above $\varphi$ equation \eqref{Jtzeta} is that the last term, which contains full spatial derivative, is self-adjoint  and more or less linear with respect to $\varphi$ up to lower order terms, and that  such a structure will not be destroyed under the time differentiations. This makes the higher order temporal energy estimates affordable. 

We now introduce the nonlinear $\varphi-$energy $\mathfrak{E}^i$ for $i\geq 1$, 
\begin{equation}\label{varE}
\mathfrak{E}^i= \int _0^R\frac{w^\alpha r^4 \big| \partial_t^{i-1}\varphi_t \big|^2}{(1+\zeta)^4} dr 
+\frac{1+\alpha}{\alpha}\int_0^R w^{1+\alpha} J^{-\frac{1+2\alpha}{\alpha}} \frac{ 1 }{r^2} \left| (r^3\partial_t^{i-1}\varphi )_r \right|^2 dr 
\end{equation}
When $i\geq 1$, $\mathfrak{E}^i$ corresponds to the homogeneous weighted energy $\mathcal{E}^i$ and we will show in Lemma \ref{equivEE} at the end of this section that they are equivalent under the assumption \eqref{small}.

We record the high order energy inequalities for the solutions to \eqref{Jtzeta}. 

\begin{lemma}[$\mathfrak{E}^i;1\leq  i\leq \lceil\alpha\rceil+3$]\label{lem5.2} Under the assumption \eqref{small}, the solutions to \eqref{Jtzeta} satisfy the following: for any small fixed $\kappa>0$
\begin{equation*}
\frac12\frac{d}{dt}\mathfrak{E}^i \leq (C\theta_1+\kappa) \mathfrak {E}^i +C\mathcal{E}^0+ C\sum_{j=1}^{i-1}\mathfrak{E}^j. 
\end{equation*}
\end{lemma}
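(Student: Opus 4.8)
The strategy is to differentiate the $\varphi$--equation \eqref{Jtzeta} in time $i-1$ times, test the result against $\partial_t^{i-1}\varphi_t$, and integrate over $(0,R)$. Since the weights $w^{\alpha}r^4$ and $w^{1+\alpha}$ in \eqref{Jtzeta} do not depend on $t$, the only time--dependent coefficients are the powers $(1+\zeta)^{-m}$ and $J^{-\frac{1+2\alpha}{\alpha}}$. After integrating the pressure term by parts in $r$ --- the boundary contributions vanishing because of the factor $r^3$ at the origin and the degeneracy of $w^{1+\alpha}$ at $r=R$, exactly as in \cite{CS10, JM10} --- the principal part produces precisely $\tfrac12\frac{d}{dt}\mathfrak{E}^i$ together with two commutators arising from $\partial_t$ hitting the coefficients $(1+\zeta)^{-4}$ and $J^{-\frac{1+2\alpha}{\alpha}}$: the first equals $2\int_0^R w^\alpha r^4|\partial_t^{i-1}\varphi_t|^2\,\zeta_t(1+\zeta)^{-5}\,dr$, and the second is $\tfrac12\tfrac{1+\alpha}{\alpha}\int_0^R w^{1+\alpha}\,\partial_t\!\big(J^{-\frac{1+2\alpha}{\alpha}}\big)\tfrac1{r^2}|(r^3\partial_t^{i-1}\varphi)_r|^2\,dr$, where $\partial_t J^{-\frac{1+2\alpha}{\alpha}}$ is controlled through $J_t=r^{-2}(r^3\varphi)_r=3\varphi+r\varphi_r$. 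By the $L^\infty$ bounds $|\zeta_t|,|\varphi|,|r\varphi_r|\precsim\theta_1$ supplied by \eqref{small} (cf.\ Lemma~\ref{lem4.7}, recalling $\varphi=(1+\zeta)^2\zeta_t$ from \eqref{varphi}), both commutators are $\precsim\theta_1\mathfrak{E}^i$.

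It then remains to estimate the test integral against the remaining terms: (i) the Leibniz commutators in which at least one $\partial_t$ falls on $(1+\zeta)^{-4}$ in the inertial term or on $J^{-\frac{1+2\alpha}{\alpha}}$ in the pressure term, and (ii) the contributions of $\partial_t^{i-1}$ applied to the genuinely lower--order terms $\tfrac{6w^\alpha r^4\varphi\varphi_t}{(1+\zeta)^7}$, $\tfrac{14w^\alpha r^4\varphi^3}{(1+\zeta)^{10}}$, and $\tfrac{4w^\alpha r^4\Phi(r)\varphi}{(1+\zeta)^7}$. For each summand the plan is to split the time derivatives so that the factor carrying the larger number of $\partial_t$'s is kept in $L^2$ against the energy weights, while the remaining factor --- a $\partial_t^a$--derivative of $\zeta$, of $(1+\zeta)^{-m}$, of $J$, or of $\varphi$ with $a\le[\tfrac{\lceil\alpha\rceil+3}{2}]+1$ --- is placed in $L^\infty$ using exactly the weighted bounds of Lemma~\ref{lem4.7}, each of which carries a factor $\theta_1$ (and the cubic/quadratic terms in (ii) gain $\theta_1^2$). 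After Cauchy--Schwarz this reduces matters to bounding quantities of the form $\int_0^R w^{\alpha-p}r^4|\partial_t^{b}\varphi|^2\,dr$ with $b\le i$, $p\ge0$: for $p=0$ these are $\precsim\mathcal{E}^0+\sum_{j=1}^{i}\mathfrak{E}^j$ directly, while for $p\ge1$ one first raises the weight by the localized Hardy inequalities near the origin, \eqref{hardy0}--\eqref{bts}, and near the boundary, \eqref{Hardy-gw} (each application trading one power of $w$ for one spatial derivative), and then recovers the resulting mixed space--time energies of total order $\le i$ from temporal ones via the elliptic estimates of Section~\ref{4} (Lemmas~\ref{lem4.3}--\ref{lem4.5} and \ref{lem4.9}, i.e.\ Proposition~\ref{equiv}) together with the equivalence $\mathfrak{E}^j\simeq\mathcal{E}^j$; altogether they are $\precsim\mathcal{E}^0+\sum_{j\le i}\mathfrak{E}^j$. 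The only summand admitting no $\theta_1$ is the top piece $4\int_0^R w^\alpha r^4\Phi(r)(1+\zeta)^{-7}\,\partial_t^{i-1}\varphi\,\partial_t^{i-1}\varphi_t\,dr$ of the potential term; since $\Phi$ is bounded (see \eqref{Phi}), a Cauchy--Schwarz with small parameter $\kappa$ controls it by $\kappa\mathfrak{E}^i+C_\kappa\int_0^R w^\alpha r^4|\partial_t^{i-1}\varphi|^2\,dr$, the latter integral being $\precsim\mathcal{E}^0$ when $i=1$ and $\precsim\mathfrak{E}^{i-1}$ when $i\ge2$. Summing all contributions yields $\tfrac12\frac{d}{dt}\mathfrak{E}^i\le(C\theta_1+\kappa)\mathfrak{E}^i+C\mathcal{E}^0+C\sum_{j=1}^{i-1}\mathfrak{E}^j$, which one runs inductively on $i$ since the right--hand side involves only $\mathfrak{E}^j$ with $j<i$.

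The main obstacle is the bookkeeping in (i)--(ii) for the summands in which several $\partial_t$'s strike $J^{-\frac{1+2\alpha}{\alpha}}$ (equivalently $J^{-1/\alpha}$): after the spatial integration by parts these pair a quantity with a genuine $w$--degeneracy against $(r^3\partial_t^{i-1}\varphi_t)_r$, and one must verify simultaneously that the $L^\infty$--placed factor is of low enough temporal order to be covered by Lemma~\ref{lem4.7} and that the $L^2$--placed factor, after the Hardy upgrades and the elliptic recovery, still lands in $\mathfrak{E}^i+\sum_{j<i}\mathfrak{E}^j$ with a $\theta_1$ in front. The matching between the power of $w$ lost per spatial derivative --- the ``$w^{\alpha+k}$ versus $w^{1+\alpha+k}$'' structure of \eqref{Ejk} --- and the powers $w^{(q-1)/2}$ appearing in \eqref{small}, combined with the cap on how many spatial derivatives the energies of Section~\ref{4} can absorb, is precisely what forces the restriction $i\le\lceil\alpha\rceil+3$ and what makes the scheme close. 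A secondary, purely technical point is the vanishing of the boundary terms in the spatial integrations by parts, which is handled by the degeneracy of $w$ at $r=R$ and the factor $r^3$ at $r=0$ together with the a priori regularity of the solution.
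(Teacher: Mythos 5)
Your proposal follows essentially the same route as the paper's proof: differentiate the $\varphi$-equation \eqref{Jtzeta} in time, test against $\partial_t^{i}\varphi$, let the inertial and self-adjoint pressure terms form $\tfrac12\frac{d}{dt}\mathfrak{E}^i$ plus commutators controlled by $\theta_1\mathfrak{E}^i$ via the $L^\infty$ bounds of \eqref{small}, treat the remaining Leibniz and lower-order terms by placing the low-temporal-order factor in $L^\infty$ (Lemma \ref{lem4.7}) and recovering the $L^2$ factor through the embedding \eqref{6000}, the Hardy inequalities, Lemma \ref{lem4.9} and the equivalence $\mathfrak{E}^j\simeq\mathcal{E}^j$, and isolate the potential term as the sole contribution without a $\theta_1$ prefactor, handled by a $\kappa$-weighted Cauchy--Schwarz. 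This matches the paper's estimates of $I_1$--$I_5$ and $J_1$--$J_7$, so the argument is correct and not a genuinely different approach.
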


\begin{proof}
We start with $i=1$. We multiply \eqref{Jtzeta} by $\varphi_t$ and integrate it over $(0,R)$. We denote each integral by $I_k$ for $1\leq k\leq 5$. We will estimate them term by term. $I_1$ forms an energy plus a commutator and thus $I_1+I_2$ can be written as  
\[
\begin{split}
I_1+I_2&= \frac{1}{2}\frac{d}{dt}\int_0^R \frac{ w^\alpha r^4 |\varphi_{t}|^2}{(1+\zeta)^4} dr - \int_0^R  \frac{4w^\alpha r^4\zeta_{t}(\varphi_t)^2}{(1+\zeta)^5} dr
\end{split}
\]
where we have used \eqref{varphi} for $I_2$. Note that the second term is bounded by $\theta_1\mathfrak{E}^1$ since $|\zeta_t|\leq \theta_1$ due to \eqref{small}. It is easy to see that $|I_3|\leq C\theta_1^2(\mathfrak{E}^1+\mathcal{E}^0)$. For $I_4$,  $\Phi(r)$ is bounded and by using the Cauchy-Swartz inequality, we see that 
\[
|I_4|\leq \kappa\, \mathfrak{E}^1 +C_\kappa \mathcal{E}^0. 
\]
We next move onto $I_5$. We first integrate it by parts in $r$ and then we see that it forms a perfect time derivative plus a commutator: 
\[
\begin{split}
I_5&= \frac{1+\alpha}{\alpha}\int_0^R  ( r^3\varphi_t)_r\cdot w^{1+\alpha}J^{-\frac{1+2\alpha}{\alpha}} \frac{1}{r^2} (r^3\varphi)_r   dr  \\
  &= \frac12 \frac{1+\alpha}{\alpha} \frac{d}{dt} \int_0^R w^{1+\alpha} J^{-\frac{1+2\alpha}{\alpha}} \frac{ 1 }{r^2} \left| (r^3\varphi )_r \right|^2dr \\
  &\quad+   \frac12 \frac{1+\alpha}{\alpha}   \frac{1+2\alpha}{\alpha} 
  \int_0^R w^{1+\alpha} J^{-\frac{1+3\alpha}{\alpha}}J_t \frac{ 1 }{r^2} \left| (r^3\varphi )_r \right|^2dr
\end{split}
\]
Since $J$ is bounded away from zero and from above and $J_t$ is bounded by $\theta_1$ because of \eqref{small}, the second term is bounded by $\theta_1\mathfrak{E}^1$. This completes the case of $i=1$. 

For higher order estimates of $\mathfrak{E}^i$ where $2\leq i\leq \lceil \alpha\rceil +3$, we first write out the $(i-1)^{th}$ temporal derivative of \eqref{Jtzeta}:
\begin{equation}\label{Jtizeta}
\begin{split}
&\frac{w^\alpha r^4 \partial_t^{i-1} \varphi_{tt}}{(1+\zeta)^4} +  
\sum_{j=1}^{i-1}c_{1j} w^\alpha r^4 \partial_t^{i-1-j} \varphi_{tt}\partial_t^{j}\left[ \frac{1}{(1+\zeta)^4}\right]   -\partial_t^{i-1}\left[ \frac{6w^\alpha r^4\varphi \varphi_{t}}{(1+\zeta)^7}\right]  \\& -  \partial_t^{i-1}\left[\frac{14 w^\alpha r^4 \varphi^3}{(1+\zeta)^{10}}\right]  - \partial_t^{i-1}\left[ \frac{4  w^\alpha r^4 \Phi(r) \varphi }{(1+\zeta)^7} \right]  -\tfrac{1+\alpha}{\alpha} r^3\partial_r\Big( w^{1+\alpha}J^{-\frac{1+2\alpha}{\alpha}} \frac{1}{r^2} [r^3 \partial_t^{i-1}\varphi]_r  \Big) \\
&+ \sum_{j=1}^{i-1}c_{2j} r^3\partial_r\Big( w^{1+\alpha}\partial_t^j[J^{-\frac{1+2\alpha}{\alpha}}] \frac{1}{r^2} [r^3\partial_t^{i-1-j}\varphi]_r  \Big) =0 
\end{split}
\end{equation}
where $c_{1j}$ and $c_{2j}$ are binomial coefficients. 
Now we multiply \eqref{Jtizeta} by $\partial_t^i\varphi$ and integrate it over $(0,R)$. We denote each integral 
by $J_k$ for $1\leq k\leq 7$. As before, we will estimate them term by term. As in the case of $I_1$, the first term  $J_1$ forms an energy plus a commutator that is bounded by $\theta_1\mathfrak{E}^i$: 
\[
\begin{split}
J_1= \frac{1}{2}\frac{d}{dt}\int_0^R \frac{ w^\alpha r^4 |\partial_t^i\varphi|^2}{(1+\zeta)^4} dr + \int_0^R  \frac{2w^\alpha r^4\zeta_{t}(\partial_t^i\varphi)^2}{(1+\zeta)^5} dr
\end{split}
\]
Next, $J_2,J_3,J_4$ will yield lower-order nonlinear terms. Here we present the detail for $J_2$. We may assume $1\leq j\leq [\frac{i}{2}]+1$ since the other cases can be treated in the same way. We treat the case when $\partial_t$ hits $\zeta$ $j$ times for the second factor $\partial_t^{j}[\frac{1}{(1+\zeta)^2}]$. By Cauchy-Schwartz inequality, we first get 
\begin{equation}
\begin{split}
|J_2|\leq C  \underbrace{\sup|w^\frac{j-1}{2}\partial_t^j\zeta| }_{(\ast)_1}\underbrace{\left( \int_0^R \frac{{w^{\alpha-j+1} r^4|\partial_t^{i-j+1}\varphi|^2}}{(1+\zeta)^4} dr\right)^{\frac12}}_{(\ast)_2}  \left( \int_0^R \frac{w^\alpha r^4|\partial_t^i\varphi|^2}{(1+\zeta)^4} dr\right)^{\frac12}.\\
\end{split}\label{J2}
\end{equation}
Note that $(\ast)_1\leq \theta_1$ by the assumption \eqref{small}. For $(\ast)_2$, by Lemma \ref{equivEE}, it suffices to consider 
$$\widetilde{({\ast})_2} \equiv \left(\int_0^R {w^{\alpha-j+1} r^4|\partial_t^{i-j+2}\zeta|^2} dr\right)^{\frac12}$$
Then by the embedding inequality \eqref{6000} in Lemma \ref{lem4.5}, we get 
\[
[\widetilde{(\ast)_2}]^2\leq \sum_{k=0}^{j-1} \int_0^R w^{\alpha-j+1} w^{j-1+k} r^4 |\partial_t^{i-j+2}\partial_r^k\zeta|^2 dr\leq 
\sum_{k=0}^{j-1}\mathcal{E}^{i-j+1+k,k}\leq C\sum_{k=0}^{j-1}\mathcal{E}^{i-j+1+k}
\]
and hence, we deduce that 
\[
|J_2|\leq C\theta_1\left(\sum_{k=0}^{j-1}\mathcal{E}^{i-j+1+k}\right)^{\frac12} \left(\mathcal{E}^{i}\right)^{\frac12}\leq \theta_1\mathcal{E}^{i} + C\theta_1\sum_{k=0}^{j-1}\mathcal{E}^{i-j+1+k}. 
\]
The $J_5$ is also lower-order. When $\partial_t$ hits $\varphi$ $(i-1)$ times, we use the Cauchy-Schwartz inequality to derive that it is bounded by 
\[
  \frac{\kappa}{2} \int_0^R \frac{w^\alpha r^4|\partial_t^{i}\varphi|^2}{(1+\zeta)^4} dr  + C_\kappa \int_0^R \frac{w^\alpha r^4 |\partial_t^{i-1}\varphi|^2}{(1+\zeta)^4}dr
\]
and for other cases, we can use the above standard nonlinear estimates to conclude that 
\[
|J_5|\leq (\kappa+C\theta_1 )\mathfrak{E}^i + C_\kappa \mathfrak{E}^{i-1} + C\theta_1\sum_{k=0}^{i-2}\mathcal{E}^{ k}. 
\]
As in the previous  $I_5$ case,  the $J_6$ term will form an energy: 
\[
\begin{split}
J_6&= \frac{1+\alpha}{\alpha}\int_0^R  ( r^3\partial_t^i\varphi)_r\cdot w^{1+\alpha}J^{-\frac{1+2\alpha}{\alpha}} \frac{1}{r^2} (r^3\partial_t^{i-1}\varphi)_r   dr  \\
  &= \frac12 \frac{1+\alpha}{\alpha} \frac{d}{dt} \int_0^R w^{1+\alpha} J^{-\frac{1+2\alpha}{\alpha}} \frac{ 1 }{r^2} \left| (r^3\partial_t^{i-1}\varphi )_r \right|^2dr\\
  &\quad +   \frac12 \frac{1+\alpha}{\alpha}   \frac{1+2\alpha}{\alpha} 
  \int_0^R w^{1+\alpha} J^{-\frac{1+3\alpha}{\alpha}}J_t \frac{ 1 }{r^2} \left| (r^3\partial_t^{i-1}\varphi )_r \right|^2dr
\end{split}
\]
It is clear that the second term is bounded by $\theta_1\mathfrak{E}^i$. It now remains to estimate $J_7$. Like the $J_2$ term, this is a lower order nonlinear term. We may assume $1\leq j\leq [\frac{i}{2}]$ since the other cases can be treated in the same way. By Cauchy-Schwartz inequality, we first get 
\begin{equation}
\begin{split}
|J_7|\leq C  \underbrace{\sup|w^\frac{j-1}{2}\partial_t^j J | }_{(\ast)_3}&\underbrace{\left( \int_0^R  w^{2+\alpha-j }   \frac{ 1 }{r^2} \left| (r^3\partial_t^{i-1-j}\varphi )_r \right|^2dr \right)^{\frac12}}_{(\ast)_4}  \\
&\times\left( \int_0^R w^{1+\alpha}  J^{-\frac{1+2\alpha}{\alpha}} \frac{ 1 }{r^2} \left| (r^3\partial_t^{i-1}\varphi )_r \right|^2dr \right)^{\frac12}.\\
\end{split}\label{J7}
\end{equation}
Note that $(\ast)_3\leq \theta_1$ by the assumption \eqref{small}. For $(\ast)_4$, by Lemma \ref{equivEE}, it suffices to consider 
$$\widetilde{({\ast})_4} \equiv \left(\int_0^R {w^{2+\alpha-j} r^4|\partial_t^{i-j}\zeta_r|^2} dr\right)^{\frac12}$$
Then by the embedding inequality \eqref{6000} in Lemma \ref{lem4.5}
\[
[\widetilde{(\ast)_4}]^2\leq \sum_{k=0}^{j} \int_0^R w^{2+\alpha-j} w^{j+k} r^4 |\partial_t^{i-j}\partial_r^k\zeta_r|^2 dr\leq 
\sum_{k=0}^{j}\mathcal{E}^{i-j+k,k}\leq C\sum_{k=0}^{j}\mathcal{E}^{i-j+k}
\]
and hence, we deduce that 
\[
|J_7|\leq C\theta_1\left(\sum_{k=0}^{j}\mathcal{E}^{i-j+k}\right)^{\frac12} \left(\mathcal{E}^{i}\right)^{\frac12}\leq \theta_1\mathcal{E}^{i} + C\theta_1\sum_{k=0}^{j}\mathcal{E}^{i-j+k}. 
\]
\end{proof}

Next we establish the equivalence of energies $\mathfrak{E}$ and $\mathcal{E}$ under the assumption \eqref{small}.

\begin{lemma}\label{equivEE} Assume \eqref{small}. For each $1\leq i\leq \lceil \alpha \rceil+3$, there exists a constant $C>0$ such that 
\begin{equation}
  \mathfrak{E}^i = \mathcal{E}^i +\mathfrak{K}^i
\end{equation}
where $|\mathfrak{K}^i| \leq C\theta_1 \sum_{k=0}^i \mathcal{E}^k $. 
\end{lemma}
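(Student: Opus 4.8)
The plan is to isolate, inside each of the two integrals defining $\mathfrak E^i$, the exact term that reproduces $\mathcal E^i$, and to show that everything left over is a finite sum of products of derivatives of $\zeta$ in which at least one factor carries a power of $\theta_1$, or else is the byproduct of an integration by parts involving strictly fewer time derivatives; these remainders will then be estimated by $C\theta_1\sum_{k=0}^i\mathcal E^k$ using the $L^\infty$ bounds \eqref{small}, the embedding \eqref{6000} of Lemma \ref{lem4.5}, and the reconstruction $\mathcal E^{j,k}\le C\sum_{l\le j}\mathcal E^l$ of Lemma \ref{lem4.9}. Two algebraic identities organize the computation. Since $\varphi=(1+\zeta)^2\zeta_t$, Leibniz gives $\partial_t^{i-1}\varphi_t=\partial_t^i\big[(1+\zeta)^2\zeta_t\big]=\partial_t^{i+1}\zeta+\partial_t^i\big[(2\zeta+\zeta^2)\zeta_t\big]$; and from $J_t=\tfrac1{r^2}(r^3\varphi)_r$ one gets $(r^3\partial_t^{i-1}\varphi)_r=r^2\partial_t^iJ$, hence $\tfrac1{r^2}\big|(r^3\partial_t^{i-1}\varphi)_r\big|^2=r^2|\partial_t^iJ|^2$, where $\partial_t^iJ=3\partial_t^i\zeta+r\partial_t^i\zeta_r+\partial_t^i\mathfrak q$ with $\mathfrak q\equiv 3\zeta^2+2\zeta\zeta_r r+\zeta^3+\zeta^2\zeta_r r$ quadratic and higher in $(\zeta,\zeta_r r)$. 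Finally, under \eqref{small} both $\tfrac1{(1+\zeta)^4}-1$ and $J^{-\frac{1+2\alpha}{\alpha}}-1$ are $O(\theta_1)$.

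For the kinetic part of $\mathfrak E^i$ I substitute the first identity and $\tfrac1{(1+\zeta)^4}=1+O(\theta_1)$ and expand the square. The term $\int_0^R w^\alpha r^4|\partial_t^{i+1}\zeta|^2\,dr=\|\partial_t^i\zeta_t\|_{X^\alpha}^2$ is exactly the kinetic part of $\mathcal E^i$, while every other term contains either the factor $\tfrac1{(1+\zeta)^4}-1$, or $2\zeta+\zeta^2$, or a time derivative landing on $(1+\zeta)^2$; each is a product of at most two derivative-factors of $\zeta$ (higher products being even better) whose time-orders add up to at most $i+1$. Placing the factor of smaller time-order in $L^\infty$ — legitimate because $\tfrac{i+1}2\le[\tfrac{\lceil\alpha\rceil+3}2]+1$, so \eqref{small} applies — leaves an integral of the shape $\int_0^R w^{\alpha-(a-1)}r^4|\partial_t^m\zeta|^2\,dr$ with $a\ge1$ the number of time derivatives on the low-order factor. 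Applying \eqref{6000} with $\beta=a-1$ and $p=\alpha-a+1$ converts this into a sum of terms $\mathcal E^{\,\cdot,l}$ of total time order $\le i-1$, which is $\le C\sum_{k\le i}\mathcal E^k$ by Lemma \ref{lem4.9}; the $\theta_1$ from the $L^\infty$ factor supplies the claimed smallness.

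For the potential part I substitute $\tfrac1{r^2}|(r^3\partial_t^{i-1}\varphi)_r|^2=r^2|\partial_t^iJ|^2$, write $J^{-\frac{1+2\alpha}{\alpha}}=1+O(\theta_1)$ and $\partial_t^iJ=3\partial_t^i\zeta+r\partial_t^i\zeta_r+\partial_t^i\mathfrak q$, and expand $r^2|\partial_t^iJ|^2$. The pure quadratic $\tfrac{1+\alpha}{\alpha}\int_0^R w^{1+\alpha}r^2\big(3\partial_t^i\zeta+r\partial_t^i\zeta_r\big)^2\,dr$ is handled exactly as the term $(\ast)$ in the $\mathcal E^0$ energy estimate: integrating the cross term by parts and using the Lane–Emden relation $(w^{1+\alpha})_r=-w^\alpha r\Phi(r)$ from \eqref{Phi} rewrites it as $\|\partial_t^i\zeta\|_{Y^\alpha}^2+3\tfrac{1+\alpha}{\alpha}\int_0^R w^\alpha r^4\Phi(r)|\partial_t^i\zeta|^2\,dr$, whose first summand is the potential part of $\mathcal E^i$ and whose second, $\Phi$ being bounded, is $\lesssim\int_0^R w^\alpha r^4|\partial_t^i\zeta|^2\,dr=\|\partial_t^{i-1}\zeta_t\|_{X^\alpha}^2\le\mathcal E^{i-1}$. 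The remaining pieces — those carrying $J^{-\frac{1+2\alpha}{\alpha}}-1$ and those built from $\partial_t^i\mathfrak q$ — are again products of derivative-factors of $\zeta$ and $\zeta_r$ with at least one $O(\theta_1)$ factor: for the $\partial_t^i\mathfrak q$-terms this holds because $\mathfrak q$ is at least quadratic, so no single factor absorbs all $i$ time derivatives and one of time-order $\le i/2$ is always free for the $L^\infty$ bound. These are disposed of by the same $L^\infty$–$L^2$ splitting together with \eqref{hardy0}, \eqref{Hardy-gw}, \eqref{6000} and Lemma \ref{lem4.9}. Collecting the estimates, $\mathfrak K^i:=\mathfrak E^i-\mathcal E^i$ obeys $|\mathfrak K^i|\le C\theta_1\sum_{k=0}^i\mathcal E^k$.

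The step I expect to be the main obstacle is the weight bookkeeping in these Moser-type estimates: once an $L^\infty$ bound $|w^{(q-1)/2}\partial_t^q\zeta|\le\theta_1$ (or $|w^{(q-1)/2}\partial_t^q\zeta_r|\le\theta_1$, or $|\zeta_r|\le\theta_1$) is invoked, the residual $L^2$ integral is weighted by a lower power of $w$, and one must verify that \eqref{6000} still applies and that the $\mathcal E^{\,\cdot,\cdot}$ it produces have spatial order at most $\lceil\alpha\rceil+3$. This is precisely where the hypotheses $3<\alpha<5$ and the choice $\lceil\alpha\rceil+3$ of the top temporal index enter, and the admissibility has to be checked for every term thrown off by the Leibniz expansions; once this is in hand, each term is closed by the already-established Hardy and embedding inequalities together with the equivalence $\widetilde{\mathcal E}\le C\mathcal E$.
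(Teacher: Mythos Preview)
Your overall strategy matches the paper's: expand $\partial_t^{i-1}\varphi$ (and its $t$- and $r$-derivatives) in terms of derivatives of $\zeta$, isolate the pure quadratic that reproduces $\mathcal E^i$, and estimate the cross and lower-order pieces using the $L^\infty$ bounds \eqref{small}, the Hardy inequalities \eqref{hardy0}, \eqref{Hardy-gw}, and Lemma \ref{lem4.9}. The paper works out $i=1$ in full and defers $i\ge2$ to the same template.

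There is, however, a genuine inconsistency in your handling of the potential part. After integrating $\tfrac{1+\alpha}{\alpha}\int_0^R w^{1+\alpha}r^2(3\partial_t^i\zeta+r\partial_t^i\zeta_r)^2\,dr$ by parts you obtain, besides $\|\partial_t^i\zeta\|_{Y^\alpha}^2$, the two lower-order pieces $3\tfrac{1+\alpha}{\alpha}\int_0^R w^{1+\alpha}r^2|\partial_t^i\zeta|^2\,dr$ and $3\tfrac{1+\alpha}{\alpha}\int_0^R w^\alpha r^4\Phi(r)|\partial_t^i\zeta|^2\,dr$ (you recorded only the second and dropped the first). Both are $\precsim\mathcal E^{i-1}+\mathcal E^i$ but carry \emph{no} factor of $\theta_1$; you then nonetheless conclude $|\mathfrak K^i|\le C\theta_1\sum_{k=0}^i\mathcal E^k$, which these terms do not obey. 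The paper's decomposition is arranged precisely to avoid this: rather than passing through $\partial_t^iJ$, it expands $(r^3\partial_t^{i-1}\varphi)_r=r^3(1+\zeta)^2\partial_t^i\zeta_r+(r^3(1+\zeta)^2)_r\,\partial_t^i\zeta+\cdots$ directly and places the entire $\partial_t^i\zeta$-piece in the remainder. For $i=1$ this piece carries the explicit factor $\zeta_t$, so $|\zeta_t|\le\theta_1$ supplies the required smallness with no integration by parts needed. By keeping $3\partial_t^i\zeta$ inside your ``main'' part you lose exactly this structure; the subsequent integration by parts cannot manufacture a $\theta_1$ that was never there.
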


\begin{proof} This can be established by direct computation with the aid of Hardy inequality \eqref{hardy0} and Lemma \ref{lem4.9}. We will prove the case of $i=1$.  From \eqref{varphi} and \eqref{varE}, 
\[
\begin{split}
\mathfrak{E}^1&= \int_0^R \frac{w^\alpha r^4 \big|\varphi_t \big|^2}{(1+\zeta)^4} dr 
+\frac{1+\alpha}{\alpha}\int_0^R w^{1+\alpha} J^{-\frac{1+2\alpha}{\alpha}} \frac{ 1 }{r^2} \left| (r^3\varphi )_r \right|^2 dr \\
&=  \int_0^R \frac{w^\alpha r^4 \big|(1+\zeta)^2\zeta_{tt}+2(1+\zeta) \zeta_t^2\big|^2}{(1+\zeta)^4} dr 
\\
&\quad\quad+\frac{1+\alpha}{\alpha}\int_0^R w^{1+\alpha} J^{-\frac{1+2\alpha}{\alpha}} \frac{ 1 }{r^2} \left| (r^3 (1+\zeta)^2\zeta_t )_r \right|^2 dr\\
&\equiv (I)+(II)
\end{split}
\]
Now 
\[
\begin{split} 
(I)&= \int_0^R {w^\alpha r^4 \big|\zeta_{tt}+\frac{2\zeta_t^2}{1+\zeta}\big|^2}dr\\
& =
 \int_0^R {w^\alpha r^4 \big|\zeta_{tt}\big|^2}dr +{ \int_0^R { \frac{4w^\alpha r^4\zeta_t^2 \zeta_{tt} }{1+\zeta}}dr+ \int_0^R {\frac{4w^\alpha r^4\zeta_t^4}{(1+\zeta)^2}}dr}\equiv\mathcal{E}^1_t+{\mathfrak{K}^1_1}
\end{split}
\] 
and 
\[
\begin{split}  
  \frac{\alpha}{1+\alpha}(II)&=\int _0^Rw^{1+\alpha} r^4 J^{-\frac{1+2\alpha}{\alpha}} (1+\zeta)^4| \partial_t\zeta_r |^2 dr+ \int_0^R w^{1+\alpha} J^{-\frac{1+2\alpha}{\alpha}} \frac{ 1 }{r^2} \left| (r^3 (1+\zeta)^2)_r\zeta_t  \right|^2 dr\\
  &\quad+2 \int _0^Rw^{1+\alpha} J^{-\frac{1+2\alpha}{\alpha}}  r (1+\zeta)^2\zeta_{tr} (r^3 (1+\zeta)^2)_r\zeta_t   dr\\
  &\equiv \mathcal{E}^1_t+{\mathfrak{K}^1_2}
\end{split}
\] 
so that 
\[
\begin{split}
\mathfrak{K}^1_2 &=\int_0^R w^{1+\alpha} r^4 \big[J^{-\frac{1+2\alpha}{\alpha}} (1+\zeta)^4-1\big]| \partial_t\zeta_r |^2 dr+ \int_0^R w^{1+\alpha} J^{-\frac{1+2\alpha}{\alpha}} \frac{ 1 }{r^2} \left| (r^3 (1+\zeta)^2)_r\zeta_t  \right|^2 dr\\
  &\quad+2 \int_0^R w^{1+\alpha} J^{-\frac{1+2\alpha}{\alpha}}  r (1+\zeta)^2\zeta_{tr} (r^3 (1+\zeta)^2)_r\zeta_t   dr
\end{split}
\]
Hence, we can write $\mathfrak{E}^1=(I)+(II)=\mathcal{E}^1+{\mathfrak{K}^1_1} +  \tfrac{1+\alpha}{\alpha} {\mathfrak{K}^1_2} $. It is easy to see that $$|\mathfrak{K}^1_1|\precsim \theta_1( \mathcal{E}^1+\mathcal{E}^0).$$ For $\mathfrak{K}^1_2$, we denote three integrals by $(i)+(ii)+(iii)$. 
Notice that 
\[
\begin{split}
|(i)|&\precsim \theta_1\int_0^R w^{1+\alpha} r^4 | \partial_t\zeta_r |^2 dr\;\text{ since }\;\big|J^{-\frac{1+2\alpha}{\alpha}} (1+\zeta)^4-1 \big|\precsim\theta_1 \\
|(ii)|&\precsim \theta_1^2\left( \int_0^R w^{1+\alpha} r^4 | \zeta_r |^2 dr +  \int_0^R w^{1+\alpha} r^2 | \zeta |^2 dr \right)\text{ since } |\zeta_t|\leq \theta_1
\\&\precsim \theta_1^2\left( \int _0^R w^{1+\alpha} r^4 | \zeta_r |^2 dr +  \int_0^R w^{1+\alpha} r^4 | \zeta |^2 dr \right)\text{ by Hardy inequality }\eqref{hardy0}\\
|(iii)|&\precsim \theta_1\left(\int_0^R w^{1+\alpha} r^4 | \zeta_{tr} |^2 dr+ \int_0^R w^{1+\alpha} r^4 | \zeta_r |^2 dr +  \int_0^R w^{1+\alpha} r^4 | \zeta |^2 dr \right)
\end{split}
\]
where in order to obtain the estimate of $(iii)$, we have applied Cauchy-Swartz inequality and used $  |\zeta_t|\leq \theta_1$ and the estimate of $(ii)$.  Therefore, we conclude that $$|\mathfrak{K}^1_2|\precsim\theta_1 ( \mathcal{E}^1+\mathcal{E}^0)$$ and this finishes the proof of $i=1$. 

Other cases of $i\geq 2$ can be treated in the same manner by expanding $\partial_t^{i-1}\varphi$ in terms of $\zeta$, $\zeta_t$,...,$\partial_t^i\zeta$ in the energy form \eqref{varE}: since 
\[
\partial_t^{i-1}\varphi =(1+\zeta)^2\partial_t^i\zeta + \sum_{j=1}^{i-1} c_{j}\partial_t^{j}[(1+\zeta)^2]\,\partial_t^{i-j}\zeta
\]
where $c_j$'s are  binomial coefficients. By plugging into \eqref{varE}, we see that the leading-order term gives rise to $\mathcal{E}^i$. Following the same spirit of the above computations on $\mathfrak{K}^1_1$ and $\mathfrak{K}^1_2$ and exploiting the nonlinear estimates as done in the previous lemma by using \eqref{small}, Hardy inequality \eqref{hardy0} and Lemma \ref{lem4.9}, one can deduce that lower-order nonlinear terms are bounded by $\theta_1 \sum_{k=0}^i \mathcal{E}^k$. 
\end{proof}


\section{Nonlinear instability}\label{6}

Based on the nonlinear estimates in the previous section, we are now ready to show a bootstrap argument that allows us to control the growth of $\mathcal{E}(t)$ in terms of the linear growth rate $\sqrt{\mu_0}$, showed in Section \ref{3}. The idea is  to assume that the lowest order energy $\mathcal{E}^0$ starting from small initial data grows no faster than the linear growth rate; then the energy inequalities in the last section allow for a bootstrap argument that claims that all of $\mathcal{E}$ grows no faster than the linear growth rate.

\begin{proposition}\label{prop6.1}
Let $(\partial_t\zeta,\,\zeta)$ be a solution to the Euler-Poisson equation \eqref{zetaE} such that 
\begin{equation*}
\sqrt{\mathcal{E}}(0) \leq C_0\delta \text{ and }\sqrt{\mathcal{E}^0}(t)\leq C_0\delta e^{\sqrt{\mu_0} t}\text{ for }0\leq t\leq T
\end{equation*}
where $\mathcal{E}^0(t)$ and $\mathcal{E}(t)$ are as defined in \eqref{E0} and \eqref{TE}. Then there exist $C_\star>0$ and $\theta_\star>0$  such that if $ 0 \leq t \leq \min \{T,T(\delta,\theta_\star)\}$ then
\begin{equation*}
\sqrt{\widetilde{\mathcal{E}}}(t)\leq C_\star\delta e^{\sqrt{\mu_0} t}\leq C_\star\theta_\star
\end{equation*}
where we have written $T(\delta,\theta_\star)=\frac{1}{\sqrt{\mu_0}}\ln \frac{\theta_\star}{\delta}$. 
\end{proposition}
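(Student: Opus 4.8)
The plan is to run a continuity (bootstrap) argument for the total energy $\widetilde{\mathcal{E}}$, fed by the nonlinear energy inequalities of Section \ref{5} and the energy equivalences of Sections \ref{4}--\ref{5}. Fix the small constant $\theta_1$ once and for all, small enough that Proposition \ref{equiv}, Lemma \ref{lem5.2} and Lemma \ref{equivEE} all apply and that the absorption constants below are harmless (in particular $C\theta_1 N\le\tfrac12$ with $N\equiv\lceil\alpha\rceil+3$). Let $T^{\star\star}$ be the supremum of times $t\le\min\{T,T(\delta,\theta_\star)\}$ for which
\[
\widetilde{\mathcal{E}}(s)\le C_\star^2\delta^2 e^{2\sqrt{\mu_0}s}\quad\text{for all }0\le s\le t .
\]
Since $\widetilde{\mathcal{E}}$ is continuous in $t$ for smooth solutions and, by Proposition \ref{equiv}, $\widetilde{\mathcal{E}}(0)\le C\mathcal{E}(0)\le CC_0^2\delta^2<C_\star^2\delta^2$ once $C_\star$ is chosen large, we have $T^{\star\star}>0$. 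The goal is to prove that on $[0,T^{\star\star}]$ one in fact has $\widetilde{\mathcal{E}}(t)\le\widetilde C\,\delta^2 e^{2\sqrt{\mu_0}t}$ with $\widetilde C$ \emph{independent of} $C_\star$; then choosing $C_\star^2>\max\{CC_0^2,\widetilde C\}$ forces $T^{\star\star}=\min\{T,T(\delta,\theta_\star)\}$ by the usual open--closed argument, and the stated conclusion follows because $\delta e^{\sqrt{\mu_0}t}\le\theta_\star$ for $t\le T(\delta,\theta_\star)=\tfrac{1}{\sqrt{\mu_0}}\ln\tfrac{\theta_\star}{\delta}$.

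On $[0,T^{\star\star}]$ the defining bound gives $\widetilde{\mathcal{E}}(t)\le C_\star^2\theta_\star^2$, so by Lemma \ref{lem4.7} the pointwise smallness \eqref{small} holds with $\theta_1$ provided $\theta_\star$ is chosen small relative to $\theta_1/C_\star$; hence every nonlinear estimate of Section \ref{5} is available on $[0,T^{\star\star}]$. I would then propagate the energies by induction on the order $i$. Writing $\varepsilon\equiv 2(C\theta_1+\kappa)$ with $\kappa$ taken small enough that $\varepsilon<2\sqrt{\mu_0}$, Lemma \ref{lem5.2} gives $\frac{d}{dt}\mathfrak{E}^i\le\varepsilon\mathfrak{E}^i+C\mathcal{E}^0+C\sum_{j<i}\mathfrak{E}^j$. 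For $i=1$ the forcing is $C\mathcal{E}^0\le CC_0^2\delta^2 e^{2\sqrt{\mu_0}t}$ by hypothesis, so Gronwall/Duhamel yields
\[
\mathfrak{E}^1(t)\le e^{\varepsilon t}\mathfrak{E}^1(0)+C\int_0^t e^{\varepsilon(t-s)}\mathcal{E}^0(s)\,ds\le C_1\,\delta^2 e^{2\sqrt{\mu_0}t},
\]
where the integral is $\lesssim e^{2\sqrt{\mu_0}t}$ precisely because $2\sqrt{\mu_0}-\varepsilon>0$, and $\mathfrak{E}^1(0)\lesssim\mathcal{E}(0)\le C_0^2\delta^2$ by Lemma \ref{equivEE} and the hypothesis. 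Assuming inductively $\mathfrak{E}^j(t)\le C_j\delta^2 e^{2\sqrt{\mu_0}t}$ for $j<i$, the forcing $C\mathcal{E}^0+C\sum_{j<i}\mathfrak{E}^j$ is again $\lesssim\delta^2 e^{2\sqrt{\mu_0}t}$, and the identical Duhamel estimate produces $\mathfrak{E}^i(t)\le C_i\delta^2 e^{2\sqrt{\mu_0}t}$ with $C_i$ built only from $C_0$, $\mu_0$, the fixed constants, and $C_1,\dots,C_{i-1}$. Summing over $1\le i\le N$ and invoking Lemma \ref{equivEE} (absorbing the $C\theta_1\sum_{k\le i}\mathcal{E}^k$ corrections, using $C\theta_1 N\le\tfrac12$) converts this to $\sum_{i=1}^N\mathcal{E}^i(t)\lesssim\delta^2 e^{2\sqrt{\mu_0}t}$; adding $\mathcal{E}^0(t)\le C_0^2\delta^2 e^{2\sqrt{\mu_0}t}$ gives $\mathcal{E}(t)\lesssim\delta^2 e^{2\sqrt{\mu_0}t}$, and finally Proposition \ref{equiv} gives $\widetilde{\mathcal{E}}(t)\le C\mathcal{E}(t)\le\widetilde C\,\delta^2 e^{2\sqrt{\mu_0}t}$ with $\widetilde C$ independent of $C_\star$, closing the bootstrap.

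The main obstacle is bookkeeping rather than analysis: one must order the choices correctly so the chain of constants closes. First $\theta_1$ is fixed (small enough for Proposition \ref{equiv}, Lemmas \ref{lem5.2} and \ref{equivEE}, and for the absorption $C\theta_1 N\le\tfrac12$); then $\kappa$ is fixed small so $\varepsilon<2\sqrt{\mu_0}$; this determines $\widetilde C$, which depends only on $\alpha,\mu_0,C_0$ and the fixed constants but not on $C_\star$; then $C_\star$ is chosen with $C_\star^2>\max\{CC_0^2,\widetilde C\}$; and only afterwards is $\theta_\star$ taken small (depending on $C_\star$ and $\theta_1$) so that Lemma \ref{lem4.7} delivers \eqref{small} on $[0,T^{\star\star}]$. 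The other delicate point is the Duhamel step itself: it is essential that the exponential rate $2\sqrt{\mu_0}$ of the forcing coming from $\mathcal{E}^0$ strictly exceeds the homogeneous growth rate $\varepsilon=2(C\theta_1+\kappa)$ of the $\mathfrak{E}^i$ equation, which holds because $\theta_1,\kappa$ are small; this is what keeps $\int_0^t e^{\varepsilon(t-s)}e^{2\sqrt{\mu_0}s}\,ds\lesssim e^{2\sqrt{\mu_0}t}$ and prevents the higher-order energies from outgrowing the linear rate $\sqrt{\mu_0}$.
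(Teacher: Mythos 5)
Your proposal is correct and follows essentially the same route as the paper: a continuity/bootstrap argument driven by the Gronwall--Duhamel estimates from Lemma \ref{lem5.2}, the equivalences in Lemma \ref{equivEE}, Proposition \ref{equiv} and Lemma \ref{lem4.9}, and the crucial observation that the forcing rate $2\sqrt{\mu_0}$ strictly exceeds the homogeneous rate so the higher energies inherit the linear growth. The only cosmetic difference is that you bootstrap directly on the bound $\widetilde{\mathcal{E}}(s)\le C_\star^2\delta^2 e^{2\sqrt{\mu_0}s}$, whereas the paper bootstraps on the absolute smallness $\sqrt{\widetilde{\mathcal{E}}}(s)\le\theta_1$ and then runs a short case analysis on $T^\ast$ versus $T(\delta,\theta_\star)$; these are equivalent formulations of the same argument.
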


\begin{proof} We will employ a bootstrap argument using all of the nonlinear
energy estimates derived in Section \ref{5}. We now choose $\theta_1$ and $\kappa$ sufficiently
small (but fixed) in all of the estimates  in Lemma \ref{lem5.2} and \ref{equivEE} so that $C\theta_1 + \kappa \leq  \sqrt{\mu_0}/2$ and $C\theta_1\leq 1/8$ in all of the energy inequalities. As before, throughout this proof we will write $C$ for a generic constant; we write this in place of $C$ to distinguish the constants from those appearing in the nonlinear energy estimates. 

Define $T^\ast$ by 
\[
T^\ast\equiv\sup \{t : \sqrt{\widetilde{\mathcal{E}}}(s)\leq \theta_1 \text{ for } 0\leq s\leq t \}
\]
Then by Lemma \ref{lem4.7}, the assumption \eqref{small} is satisfied by the solution to  the Euler-Poisson equation \eqref{zetaE} for $0\leq t\leq \min\{T,T^\ast\}$. 

Let $0\leq t\leq \min\{T,T^\ast\}$ be given. From  Lemma \ref{lem5.2}, we have that for each $1\leq i\leq \lceil \alpha\rceil+3$
\begin{equation}\label{86}
\frac12\frac{d}{dt}\mathfrak{E}^i \leq \frac{\sqrt{\mu_0}}{2}\mathfrak {E}^i +C\mathcal{E}^0+ C\sum_{j=1}^{i-1}\mathfrak{E}^j
\end{equation}

We begin with $i=1$. Notice that for $i=1$, the last term in \eqref{86} is not present. Since $\mathcal{E}^0(t)\leq C_0^2\delta^2e^{2\sqrt{\mu_0}t}$ by the assumption, by using Gronwall inequality, we see that 
\[
\mathfrak{E}^1 \leq C \delta^2e^{2\sqrt{\mu_0}t}. 
\]
By Lemma \ref{equivEE}, we deduce that 
\[
\mathcal{E}^1 \leq C \delta^2e^{2\sqrt{\mu_0}t}. 
\]
For $2\leq i\leq \lceil \alpha\rceil+3$, inductively, we apply Gronwall inequality to \eqref{86} to obtain 
\[
\mathfrak{E}^i \leq C \delta^2e^{2\sqrt{\mu_0}t}
\]
and in turn by using Lemma \ref{equivEE}, we derive that 
\[
\mathcal{E}^i \leq C \delta^2e^{2\sqrt{\mu_0}t}
\]
for all $1\leq i\leq \lceil \alpha\rceil+3$. Moreover, by Lemma \ref{lem4.9}, we also deduce that for $0\leq t\leq \min\{T,T^\ast\}$, 
\begin{equation}\label{87}
\widetilde{\mathcal{E}}(t) \leq \widetilde{C} \delta^2e^{2\sqrt{\mu_0}t}
\end{equation}
for some constant $\widetilde{C}>0$ independent of $\delta$. We now choose $\theta_\star>0$ such that  $\widetilde{C}(\theta_\star)^2<(\theta_1)^2$. We consider the following two cases.

(i) $T(\delta,\theta_\star)\leq \min\{T,T^\ast\}$: In this case, the conclusion follows without any additional
work. 

(ii) $T(\delta,\theta_\star)> \min\{T,T^\ast\}$: We claim that it must hold that $T\leq T^\ast < T(\delta,\theta_\star)$  in
which case the conclusion directly follows. To prove the claim, we note that otherwise we would have $T^\ast< T < T(\delta,\theta_\star)$. Letting $t = T^\ast$ from \eqref{87}, we get
\[
\mathcal{E}(T^\ast) \leq\widetilde{C} \delta^2e^{2\sqrt{\mu_0}T^\ast} < \widetilde{C} \delta^2e^{2\sqrt{\mu_0}T(\delta,\theta_\star)}=\widetilde{C}(\theta_\star)^2  
\]
by the definition of $T(\delta,\theta_\star)$. However, this is impossible due to our choice of $\theta_\star$ since it would then contradict the definition of $T^\ast$. Since we then find our desired estimate in both cases, this concludes the proof of the proposition.
\end{proof}

In order to establish the nonlinear instability, we will take the growing mode profile $\phi_0$ constructed in Section \ref{3} as the initial data to the Euler-Poisson equation as in \eqref{initial}. By Lemma \ref{lem3.4}, we deduce that  all the spatial energies in $\widetilde{\mathcal{E}}$ for $\phi_0$ are finite. See also  Remark \ref{rem3.7}. Then, of course, the initial temporal total energy ${\mathcal{E}}(0)$ for such $\phi_0$ is obtained from the initial spatial energy through the equation. 

 We are now ready to prove our main result. 

\begin{theorem}\label{thm} There exist $\theta_0>0$, $C>0$, and $0<\delta_0<2\theta_0$  such that for any $0<\delta\leq \delta_0$, there exists a family of solutions $(\partial_t\zeta^\delta,\, \zeta^\delta)$ to  the Euler-Poisson system \eqref{zetaE}  so that 
\[
\sqrt{\mathcal{E}}(0)\leq C \delta \text{ but }\sup_{0\leq t\leq T^\delta}\sqrt{\mathcal{E}^0}(t)\geq \theta_0. 
\]
Here $T^\delta$ is given by $T^\delta=\frac{1}{\sqrt{\mu_0}}\ln\frac{2\theta_0}{\delta}$. 
\end{theorem}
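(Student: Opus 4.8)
The plan is to run the standard Guo–Strauss-type nonlinear instability argument, using the growing mode $\phi_0$ from Section \ref{3} as the seed and Proposition \ref{prop6.1} as the bootstrap engine. Specifically, I would choose initial data proportional to the largest growing mode: set $(\zeta^\delta(0), \partial_t\zeta^\delta(0)) = \delta\,(\phi_0, \sqrt{\mu_0}\,\phi_0)$, which is exactly the $t=0$ slice of the linear solution $\delta e^{\sqrt{\mu_0}t}\phi_0$. By Lemma \ref{lem3.4} and Remark \ref{rem3.7}, this profile has finite total energy $\widetilde{\mathcal{E}}(0)$ for $3<\alpha<5$, so $\sqrt{\mathcal{E}}(0)\le C\delta$ for a constant $C$ depending only on $\phi_0$ (hence on $\alpha,\mu_0$). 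Then invoke local existence of smooth solutions (taken for granted in the paper) to produce the solution $(\partial_t\zeta^\delta,\zeta^\delta)$ to \eqref{zetaE} on a maximal time interval.

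The core of the proof is the contradiction/continuation step. Suppose, for contradiction, that $\sqrt{\mathcal{E}^0}(t) < \theta_0$ for all $t\in[0,T^\delta]$ with $\theta_0$ to be fixed. One cannot directly assert $\sqrt{\mathcal{E}^0}(t)\le C_0\delta e^{\sqrt{\mu_0}t}$ a priori, so I would instead define the stopping time
\[
T_\sharp \equiv \sup\{t\ge 0 : \sqrt{\mathcal{E}^0}(s)\le 2 C_\ell \delta e^{\sqrt{\mu_0} s}\ \text{for } 0\le s\le t\},
\]
where $C_\ell$ is the constant from the linear estimates (Lemmas \ref{lem4.1}, \ref{lem3.7}) applied to the linear solution $\Psi_L = \delta e^{\sqrt{\mu_0}t}\phi_0$. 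On $[0,\min\{T_\sharp, T^\star\}]$ Proposition \ref{prop6.1} gives $\sqrt{\widetilde{\mathcal{E}}}(t)\le C_\star\delta e^{\sqrt{\mu_0}t}$. The key is a Duhamel-type comparison: writing $\zeta^\delta = \Psi_L + \omega$, the error $\omega$ solves the linear equation \eqref{L1} with a forcing term built from the quadratic-and-higher nonlinearities in \eqref{zetaE}, which by the energy estimates of Section \ref{5} and Proposition \ref{prop6.1} is controlled in $X^\alpha$ by $C\widetilde{\mathcal{E}}(t)\le C(C_\star\delta)^2 e^{2\sqrt{\mu_0}t}$. Feeding this through the linear solution operator (Lemma \ref{lem4.1}) and Gronwall yields $\|\omega(t)\|_{X^\alpha}\lesssim \delta^2 e^{2\sqrt{\mu_0}t}$, which is $o(\delta e^{\sqrt{\mu_0}t})$ while $\delta e^{\sqrt{\mu_0}t}$ stays small — precisely the regime $t\le T^\delta$ enforces. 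Hence $\sqrt{\mathcal{E}^0}(t) \ge \|\partial_t\zeta^\delta\|_{X^\alpha} + \ldots \ge c\,\delta e^{\sqrt{\mu_0}t} - C\delta^2 e^{2\sqrt{\mu_0}t}$, where $c>0$ comes from the nondegeneracy of $\phi_0$ (it is not the zero eigenfunction, and $\|\phi_0\|_{X^\alpha}=1$ after normalization in Proposition \ref{prop3.1}).

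Now fix the escape time. Let $T^\delta = \frac{1}{\sqrt{\mu_0}}\ln\frac{2\theta_0}{\delta}$, so that $\delta e^{\sqrt{\mu_0} T^\delta} = 2\theta_0$. Choosing $\theta_0$ small enough (relative to the constants $C_\star,\theta_\star,\theta_1$ and the implicit constants above) guarantees two things simultaneously: first, $C_\star\delta e^{\sqrt{\mu_0}t}\le C_\star\cdot 2\theta_0 < \theta_1$ throughout $[0,T^\delta]$, so $T^\star \ge T^\delta$ and the smallness assumption \eqref{small} persists; second, the bootstrap bound $\sqrt{\mathcal{E}^0}(t)\le 2C_\ell\delta e^{\sqrt{\mu_0}t}$ is strictly improved to $\le \frac{3}{2}C_\ell\delta e^{\sqrt{\mu_0}t}$ on $[0,T_\sharp]$, which by continuity forces $T_\sharp \ge T^\delta$. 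Then at a time $t_\star\le T^\delta$ where $\delta e^{\sqrt{\mu_0}t_\star}$ reaches $2\theta_0$, the lower bound gives $\sqrt{\mathcal{E}^0}(t_\star)\ge 2c\theta_0 - C(2\theta_0)^2/\delta\cdot\delta = 2c\theta_0 - 4C\theta_0^2 \ge c\theta_0 =: \theta_0'$ for $\theta_0$ small, contradicting the assumption $\sqrt{\mathcal{E}^0} < \theta_0'$ (relabel). The main obstacle I anticipate is making the error estimate $\|\omega\|_{X^\alpha}\lesssim \delta^2 e^{2\sqrt{\mu_0}t}$ fully rigorous: one must verify that the nonlinear forcing really lands in $X^\alpha$ with the claimed quadratic bound (this is where the weighted structure and the Hardy inequalities of Section \ref{4} are essential, since the pressure nonlinearity $h(\zeta,\zeta_r r)$ involves $\zeta_r$ with the wrong naive weight), and that no derivative loss occurs — the point of carrying $\lceil\alpha\rceil+3$ temporal derivatives in $\widetilde{\mathcal{E}}$ and the equivalence $\widetilde{\mathcal{E}}\simeq\mathcal{E}$ from Proposition \ref{equiv}. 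With those in hand, the comparison and the choice of constants are routine, and the theorem follows with $C$ and $\theta_0$ as constructed.
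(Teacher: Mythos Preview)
Your proposal is correct and follows essentially the same Guo--Strauss-type argument as the paper: seed with $\delta(\phi_0,\sqrt{\mu_0}\phi_0)$, define a stopping time up to which $\mathcal{E}^0$ grows at most linearly, invoke Proposition \ref{prop6.1} to control $\widetilde{\mathcal{E}}$, apply Duhamel to split the solution into the linear growing mode plus a nonlinear correction, and show the correction is $O((\delta e^{\sqrt{\mu_0}t})^2)$ using the weighted/Hardy machinery of Section \ref{4}. The paper carries out exactly the $X^\alpha$ bound on $\tfrac{1}{w^\alpha r^4}N\zeta$ that you flag as the main technical point, and closes the bootstrap in the same way.
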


\begin{proof} We write the equation \eqref{zetaE} as 
\begin{equation}\label{88}
 {w^\alpha r^4\zeta_{tt}} = L\zeta +N\zeta
\end{equation}
where
\[
\begin{split}
L\zeta&=\frac{1+\alpha}{\alpha} (w^{1+\alpha} r^4 \zeta_r)_r+(3\frac{1+\alpha}{\alpha} -4)(1+\alpha) w^\alpha r^3w_r \zeta \\
N\zeta
&=(2\zeta+\zeta^2)\left\{ \frac{1+\alpha}{\alpha} (w^{1+\alpha} r^4 \zeta_r)_r+(3\frac{1+\alpha}{\alpha} -4)(1+\alpha) w^\alpha r^3w_r \zeta \right\} \\
&\quad- (1+\zeta)^2r^3  \partial_r (w^{1+\alpha} h(\zeta,\zeta_rr))-\frac{(1+\zeta)^4-1-4\zeta(1+\zeta)^4}{(1+\zeta)^2} r^3\partial_r(w^{1+\alpha})
\end{split}
\]
where $h$ is given by \eqref{exp}. Recall that $L\zeta$ was introduced in \eqref{L}.  Let $\phi_0$ be a growing mode constructed in Proposition \ref{prop3.1}. Normalize $\phi_0$ such that 
\begin{equation}\label{nor}
(1+\mu_0)\|\phi_0\|^2_{X^\alpha} + \|\phi_0\|^2_{Y^\alpha}=1
\end{equation}
where $\|\cdot\|_{X^\alpha}$ and $\|\cdot\|_{Y^\alpha}$ are given in \eqref{XY}. 
Consider a family of initial data $(\zeta, \zeta_t)\big|_{t=0}= \delta(\phi_0,\sqrt{\mu_0}\phi_0).$
Then due to the regularity of the growing mode $\phi_0$ obtain in Lemma \ref{lem3.3} and \ref{lem3.4}, $\sqrt{\mathcal{E}}(0)\leq C_0\delta$ for some constant $C_0>0$.   Let $(\zeta^\delta, \zeta^\delta_t)$ be $\mathcal{E}-$solutions to the Euler-Poisson equation \eqref{88} with the given initial data 
\begin{equation}\label{initial}
(\zeta^\delta, \zeta^\delta_t)\Big|_{t=0}= \delta(\phi_0,\sqrt{\mu_0}\phi_0).
\end{equation}
Define $T$ by 
\[
T\equiv\sup\{\tau : {\mathcal{E}^0} (t) \leq 3 \delta^2 e^{2\sqrt{\mu_0}t} \text{ for }0\leq t\leq \tau\}. 
\]
Then by Proposition \ref{prop6.1}, there exist $C_\star>0$ and $\theta_\star>0$ such that for 
$0\leq t\leq \min\{T,T(\delta,\theta_\star)\}$, 
\begin{equation}\label{90}
\sqrt{\widetilde{\mathcal{E}}}(t)\leq C_\star\delta e^{\sqrt{\mu_0} t}\leq C_\star\theta_\star
\end{equation}

We now wish to write the solutions $\zeta^\delta$ as linear and nonlinear parts. To do so, let 
$\mathcal{L}(t)\binom{\zeta_1}{\zeta_2}$ be the solution operator for the linearized equation 
\[
 {\zeta_{tt}} -\frac{1}{w^\alpha r^4}L\zeta=0
\]
with the given data $\binom{\zeta}{\zeta_t}\big|_{t=0}=\binom{\zeta_1}{\zeta_2}$. Note that $\mathcal{L}(t)\binom{\zeta_1}{\zeta_2}$ is given by $\Psi(t)$ in \eqref{L1}. Then by Duhamel's principle, the solutions to the nonlinear Euler-Poisson equation \eqref{88} can be written as 
\[
 {\zeta^\delta(t)}= \mathcal{L}(t) \binom{\delta\phi_0}{\delta \sqrt{\mu} \phi_0} + 
\int_0^t \mathcal{L}(t-s) \binom{ 0 }{\frac{1}{w^\alpha r^4}N\zeta^\delta (s)} ds. 
\]
For notational convenience, we introduce the following norm: 
\[
\left\| \Psi \right\|_0^2\equiv \left\| \Psi \right\|^2_{X^\alpha} + \left\| \partial_t\Psi \right\|^2_{X^\alpha} +\left\| \Psi  \right\|^2_{Y^\alpha} 
\]
We remark that $\|\cdot\|_0^2$ corresponds to the zeroth-order energy $\mathcal{E}^0$. 
Now since $\phi_0$ is a largest growing mode, we see that 
\[
\mathcal{L}(t) \binom{\delta\phi_0}{\delta \sqrt{\mu} \phi_0} = \delta e^{\sqrt{\mu_0}t} {\phi_0}. 
\]
Then with \eqref{nor}, we obtain  that 
\begin{equation}\label{91}
\left\| \mathcal{L}(t) \binom{\delta\phi_0}{\delta\sqrt{\mu} \phi_0} \right\|_0 = \delta e^{\sqrt{\mu_0}t}. 
\end{equation}
For the nonlinear part, we use Lemma \ref{lem4.1} and \ref{lem3.7} to derive that 
\[
\begin{split}
\left\| \int_0^t \mathcal{L}(t-s) \binom{0}{ \frac{1}{w^\alpha r^4}N\zeta^\delta (s) }ds \right\|_0\leq C\int_0^t e^{\sqrt{\mu_0}(t-s)} 
\left\|\frac{1}{w^\alpha r^4} N\zeta^\delta (s)\right\|_{X^\alpha} ds
\end{split}
\]
for a constant $C>0$ only depending on $\mu_0$. 
Notice that $N
\zeta=O(|\zeta|^2+|\zeta_r|^2+|\zeta_{rr}|^2)$. We will show that  $\|\frac{1}{w^\alpha r^4} N\zeta^\delta (s)\|_{X^\alpha}$ is bounded by $(\delta e^{\sqrt{\mu_0}s})^2$. To do so, first we crudely expand $\frac{1}{w^\alpha r^4} N\zeta^\delta $:  
\[
\begin{split}
\Big|\frac{1}{w^\alpha r^4} N\zeta\Big|& = \Big|(2\zeta+\zeta^2)\left\{ \frac{1+\alpha}{\alpha}\frac{ (w^{1+\alpha} r^4 \zeta_r)_r}{w^\alpha r^4}+(3\frac{1+\alpha}{\alpha} -4)(1+\alpha)\frac{w_r}{r} \zeta \right\} \\
&\quad\;\;- (1+\zeta)^2\frac{\partial_r (w^{1+\alpha} h(\zeta,\zeta_rr))}{w^\alpha r}-\frac{(1+\zeta)^4-1-4\zeta(1+\zeta)^4}{(1+\zeta)^2} (1+\alpha)\frac{w_r}{r}\Big|
\end{split}
\]
and then use the triangle inequality to get 
\[
\Big|\frac{1}{w^\alpha r^4} N\zeta\Big| \leq C(|\zeta|+|\zeta|^2)\big(|w\zeta_{rr}|+|w_r\zeta_r|+|\frac{w \zeta_r}{r}|+|\frac{w_r \zeta}{r}|\big)+C\big( |\frac{w_r h}{r} +\frac{w\partial_rh }{r}|  \big). 
\]
Then by using the $L^\infty$ estimate in Lemma \ref{lem4.7} and by the property of $w$ in Lemma \ref{lem2.1}, we obtain  
\[
\begin{split}
\left\|\frac{1}{w^\alpha r^4} N\zeta^\delta (s)\right\|_{X^\alpha}^2&= \int_0^R  w^\alpha r^4   \Big|\frac{1}{w^\alpha r^4} N\zeta^\delta(s)\Big|^2 dr\\
&\leq C\widetilde{\mathcal{E}} (s)\Big\{ \int_0^R w^{\alpha+2} r^4 |\zeta^\delta_{rr}|^2 dr +\underbrace{\int_0^R w^\alpha r^4 |\zeta^\delta_r|^2 dr }_{(a_1)}+\underbrace{\int_0^R w^{\alpha+2} r^2 |\zeta^\delta_r|^2 dr}_{(b_1)} \\
&\quad\quad\quad\; + \int_0^R w^\alpha r^4|\zeta^\delta|^2 dr +\underbrace{\int_0^R w^\alpha r^4 |h^\delta|^2 dr}_{(a_2)}+ \underbrace{\int_0^R w^{\alpha+2}r^2 |\partial_rh^\delta|^2dr}_{(b_2)}\Big\}. 
\end{split}
\]
The first terms in the second and third lines have the right forms of the total energy $\widetilde{\mathcal{E}}$. For other terms, 
we apply the Hardy inequality near the boundary \eqref{Hardy-gw} to $(a_1)$ and $(a_2)$ and the Hardy 
inequality near the origin \eqref{hardy0} to $(b_1)$ and $(b_2)$ to deduce that 
\[
\left\|\frac{1}{w^\alpha r^4} N\zeta^\delta (s)\right\|_{X^\alpha}^2 \leq C(\widetilde{\mathcal{E}} (s))^2. 
\]
Now by \eqref{90}, we get 
\begin{equation}
\begin{split}\label{92}
\left\| \int_0^t \mathcal{L}(t-s) \binom{ \frac{1}{w^\alpha r^4}N\zeta^\delta (s) }{0} ds \right\|_0&\leq C\int_0^t e^{\sqrt{\mu_0}(t-s)} \widetilde{\mathcal{E}} (s) ds\\
&\leq C\delta^2 \int_0^t  e^{\sqrt{\mu_0}(t-s)} e^{2\sqrt{\mu_0}s } ds\\
&\leq C_1 (\delta e^{\sqrt{\mu_0} t})^2
\end{split}
\end{equation}
where $C_1$ is a constant independent of $\delta$. Now if necessary, fix $\theta_\star$ sufficiently small so that 
$C_1\theta_\star\leq1/2 $. 

Next, we claim that $T(\delta,\theta_\star)\leq T$. Suppose not: $T(\delta,\theta_\star)>T$. Then by \eqref{91} and \eqref{92}, we have 
\[
\begin{split}
\big\|{\zeta^\delta}\big\|_0(T) &\leq \left\| \mathcal{L}(t) \binom{\delta\sqrt{\mu}\phi_0}{\delta \phi_0} \right\|_0 (T)+ \left\|\int_0^t \mathcal{L}(t-s) \binom{ \frac{1}{w^\alpha r^4}N\zeta^\delta (s) }{0} ds \right\|_0 (T) \\
&\leq \delta e^{\sqrt{\mu_0}T} + C_1\theta_\star \delta e^{\sqrt{\mu_0} T} \leq \frac{3}{2} \delta e^{\sqrt{\mu_0}T}
\end{split}
\]
which would contradict the definition of $T$. With $T(\delta,\theta_\star)\leq T$ in hand, we now see that by using   \eqref{91} and \eqref{92} again, 
\[
\sqrt{\mathcal{E}^0}(T(\delta,\theta_\star))=\big\|{\zeta^\delta}\big\|_0(T(\delta,\theta_\star))\geq 
\delta e^{\sqrt{\mu_0}T(\delta,\theta_\star)}  - \frac12 \delta e^{\sqrt{\mu_0}T(\delta,\theta_\star)}= \frac12 \theta_\star. 
\]
Set $\theta_0\equiv \frac12 \theta_\star$ and $T^\delta\equiv T(\delta,\theta_\star) =T(\delta,2\theta_0)$.   This finishes the proof of the theorem. 
\end{proof}


\

\textbf{Acknowledgements.} This work was supported in part by NSF Grant DMS-0908007. Part of work was done during the author's visit to ICERM, Brown University in the fall of 2011 and she thanks the institute for the support. Also, she would like to thank \textsc{Yan Guo} for his encouragement and interest on this work.

\

\end{document}